\newtheorem{theorem}{Theorem}[section]
\newtheorem{definition}[theorem]{Definition}
\numberwithin{equation}{section}
\newtheorem{lemma}[theorem]{Lemma}
\newtheorem{proposition}[theorem]{Proposition}
\newtheorem{corollary}[theorem]{Corollary}
\newtheorem{remark}[theorem]{Remark}
\newtheorem{example}[theorem]{Example}
\newtheorem{claim}{Claim}[section]
\newtheorem{question}[theorem]{Question}
\numberwithin{equation}{section}
\def\N{\mathbb{N}}
\def\Z{\mathbb{Z}}
\def\P{\mathbb{P}}
\def\E{\mathbb{E}}
\renewcommand{\phi}{\varphi}
\renewcommand{\epsilon}{\varepsilon}
\newcommand{\1}{{\text{\Large $\mathfrak 1$}}}
\newcommand{\taver}{t_{\mathrm{ave}}}
\newcommand{\tmix}{t_{\mathrm{mix}}}
\newcommand{\tg}{t_{\mathrm{G}}}
\newcommand{\tsep}{t_{\mathrm{sep}}}
\newcommand{\tl}{t_{\mathrm{L}}}
\newcommand{\thit}{t_{\mathrm{H}}}
\newcommand{\tstop}{t_{\mathrm{stop}}}
\newcommand{\tct}{t_{\mathrm{cts}}}
\newcommand{\tprod}{t_{\mathrm{prod}}}
\newcommand{\tces}{t_{\mathrm{Ces}}}
\newcommand{\til}{\widetilde}
\newcommand{\estart}[2]{\mathbb{E}_{#2}\!\left[#1\right]}
\newcommand\be{\begin{equation}}
\newcommand\ee{\end{equation}}
\begin{document}
\title{\bf Mixing times are hitting times of large sets}

\author{
Yuval Peres\thanks{Microsoft Research, Redmond, Washington, USA; peres@microsoft.com} \and Perla Sousi\thanks{University of Cambridge, Cambridge, UK;   p.sousi@statslab.cam.ac.uk}
}
\maketitle
\begin{abstract}
We consider irreducible reversible discrete time Markov chains on a finite state space.
Mixing times and hitting times are fundamental parameters of the chain. We relate them by showing
that the mixing time of the lazy chain is equivalent to the maximum over initial states $x$ and large sets $A$ of the hitting time of $A$ starting from $x$.
We also prove that the first time when averaging over two consecutive time steps is close to stationarity is equivalent to the mixing time of the lazy version of the chain.
\newline
\newline
\emph{Keywords and phrases.} Markov chain, mixing time, hitting time, stopping time.
\newline
MSC 2010 \emph{subject classifications.}
Primary   60J10.   
\end{abstract}

\section{Introduction}
Mixing times and hitting times are among the most fundamental notions associated with a finite Markov chain. A variety of tools have been developed to estimate both these notions; in particular, hitting times are closely related to potential theory and they can be determined by solving a system of linear equations. In this paper we establish a new connection between mixing times and hitting times for reversible Markov chains (Theorem ~\ref{thm:mix-hit}).

Let $(X_t)_{t \geq 0}$ be an irreducible Markov chain on a finite state space with transition matrix $P$ and stationary distribution $\pi$. For $x,y$ in the state space we write
\[
P^t(x,y)=\P_x(X_t=y),
\]
for the transition probability in $t$ steps.


Let $\displaystyle d(t) = \max_{x}\|P^t(x,\cdot) - \pi \|$, where $\|\mu-\nu\|$ stands for the total variation distance between the two probability measures $\mu$ and $\nu$. Let $\epsilon>0$. The total variation mixing is defined as follows:
\[
\tmix(\epsilon)=\min\{t \geq 0: d(t) \leq \epsilon\}.
\]
We write $P^t_L$ for the transition probability in $t$ steps of the lazy version of the chain, i.e.\ the chain with transition matrix $\frac{P+I}{2}$. If we now let $\displaystyle d_L(t) = \max_{x}\|P^t_L(x,\cdot) - \pi \|$, then we can define the mixing time of the lazy chain as follows:
\begin{align}\label{eq:tmixlazy}
\tl(\epsilon) = \min\{t \geq 0: d_L(t)\leq \epsilon \}.
\end{align}
For notational convenience we will simply write $\tl$ and $\tmix$ when $\epsilon=1/4$.

Before stating our first theorem, we introduce the maximum hitting time of ``big'' sets. Let $\alpha<1/2$, then we define
\[
\thit(\alpha)=\max_{x,A: \pi(A)\geq \alpha} \E_x[\tau_A],
\]
where $\tau_A$ stands for the first hitting time of the set $A$ by the Markov chain with transition matrix~$P$.

It is clear (and we prove it later) that if the Markov chain has not hit a big set, then it cannot have mixed. Thus for every $\alpha>0$, there is a positive constant $c'_\alpha$ so that
\[
\tl \geq c'_\alpha \thit(\alpha).
\]
In the following theorem, we show that the converse is also true when a chain is reversible.
\begin{theorem}\label{thm:mix-hit}
Let $\alpha<1/2$. Then there exist positive constants $c'_\alpha$ and $c_\alpha$ so that for every reversible chain
\[
c'_\alpha \thit(\alpha) \leq \tl \leq c_\alpha\thit(\alpha).
\]
\end{theorem}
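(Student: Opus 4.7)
The plan is to prove the two inequalities in Theorem~\ref{thm:mix-hit} separately: the lower bound is elementary, while the upper bound is the substantive content and rests on the characterization of reversible mixing times via stopping times to stationarity.

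For the \emph{lower bound} $\tl \geq c'_\alpha \thit(\alpha)$, fix $x^*$ and $A^*$ with $\pi(A^*)\geq \alpha$ achieving $\estart{\tau_{A^*}}{x^*} = \thit(\alpha)$. Submultiplicativity of $d_L$ gives $d_L(k\tl)\leq 2^{-k-1}$, so for $k$ of order $\log(1/\alpha)$ the lazy chain started from any state $y$ lies inside $A^*$ at time $k\tl$ with probability at least $3\alpha/4$; in particular $\prstart{\tau^L_{A^*}\leq k\tl}{y}\geq 3\alpha/4$ uniformly in $y$, where $\tau^L_A$ denotes the hitting time of $A$ for the lazy chain. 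Iterating this tail bound by the Markov property across disjoint blocks of length $k\tl$ yields $\prstart{\tau^L_{A^*}>jk\tl}{x^*}\leq (1-3\alpha/4)^j$, and summing over $j$ gives $\estart{\tau^L_{A^*}}{x^*}\leq Ck\tl/\alpha$. Using that lazy expected hitting times equal exactly twice their non-lazy counterparts, this rearranges to $\thit(\alpha)\leq C'\,\tl\,\log(1/\alpha)/\alpha$, which is the desired lower bound with $c'_\alpha \asymp \alpha/\log(1/\alpha)$.

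For the \emph{upper bound} $\tl\leq c_\alpha \thit(\alpha)$, I would invoke Aldous's theorem, which says that for reversible lazy chains $\tl\asymp \tstop$, where
\[
\tstop := \max_x \min\{\estart{T}{x}\,:\, T \text{ stopping time with } X_T\sim\pi\}.
\]
Thus it suffices to construct, for every starting state $x$, a stopping time $T_x$ with $X_{T_x}\sim\pi$ and $\estart{T_x}{x}\leq C_\alpha \thit(\alpha)$. My construction proceeds in two stages: fix a set $B$ with $\pi(B)\geq 1-\alpha>1/2$ (such a $B$ satisfies $\pi(B)\geq\alpha$, so $\estart{\tau_B}{x}\leq \thit(\alpha)$ for every $x$); wait for $\tau_B$, and then apply a Lov\'asz--Winkler-type filling rule to produce an additional delay $T'$ with $X_{\tau_B+T'}\sim\pi$. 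Reversibility enters through a time-reversal argument that recasts the problem of matching the post-hitting distribution on $B$ to $\pi$ as a hitting-time problem for the reversed chain, which has the same hitting times as the original.

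The \emph{main obstacle} is bounding $\estart{T'}{x}$ by a constant multiple of $\thit(\alpha)$. The entrance distribution into $B$ can be highly skewed---concentrated on low-$\pi$ states---so a naive matching scheme is wasteful. The key idea is that by reversibility, the time-reversed chain started from $\pi$ and stopped upon exiting $B$ has a distribution closely related to $\pi$ restricted to the boundary, and a symmetric correction starting from the entrance law into $B$ can be carried out by hitting another set of comparable $\pi$-mass. Verifying that this filling scheme produces $\pi$ exactly while paying only $O(\thit(\alpha))$ in expected additional waiting time, and checking that the hypothesis $\alpha<1/2$ is used only to secure the large set $B$, is the technical heart of the argument.
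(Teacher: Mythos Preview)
Your lower bound argument is correct and essentially matches the paper's (the paper routes it through the geometric mixing time $\tg$, but the underlying idea---run for a multiple of the mixing time, hit the large set with uniformly positive probability, iterate---is the same).

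The upper bound, however, has a genuine gap. Your plan is to hit an arbitrary fixed set $B$ with $\pi(B)\geq 1-\alpha$ and then ``correct'' the entrance distribution to $\pi$ via a filling rule, claiming the correction costs $O(\thit(\alpha))$. But you give no mechanism for this bound, and indeed the choice of $B$ is doing no work: taking $B$ to be the whole state space reduces your scheme to bounding $\tstop$ directly by $\thit(\alpha)$, which is the statement to be proved. The time-reversal heuristic you sketch (``the reversed chain started from $\pi$ and stopped upon exiting $B$\ldots'') does not translate into a concrete bound on the filling-rule cost from the entrance law into $B$; the entrance distribution can be concentrated on states that are expensive to correct, and nothing in your outline controls this.

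The paper takes a completely different route for the hard direction. It introduces the geometric mixing time $\tg$ (mixing at an independent geometric time) and proves $\tg\asymp\thit(\alpha)$ for \emph{all} chains, reversible or not. The key step is: if $d_G(t)>\gamma>\alpha$, there exist $x$ and $A$ with $\pi(A)>\gamma$ and $\P_x(X_{Z_t}\in A)<\pi(A)-\gamma$; one then defines $B=\{y:\P_y(X_{Z_t}\in A)\geq \pi(A)-\alpha\}$, checks $\pi(B)\geq\alpha$ by stationarity, and shows by a contradiction argument (using the memoryless property of $Z_t$ and the strong Markov property at $\tau_B$) that $\max_z\E_z[\tau_B]\gtrsim t$. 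Reversibility enters only in the separate equivalence $\tg\asymp\tl$, which goes through $\tstop$ and Aldous's machinery. So the substantive new idea is not a clever stationary stopping time built from hitting times, but rather the passage through $\tg$ and the construction of the auxiliary set $B$.
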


\begin{remark}\label{rem:aldous}
\rm{
Aldous in \cite{Aldousmixing} showed that the mixing time, $\tct$, of a continuous time reversible chain is equivalent to $\displaystyle \tprod=\max_{x,A:\pi(A)>0}\pi(A)\E_x[\tau_A]$. The inequality $\tprod\leq c_1 \tct$, for a positive constant $c_1$, which was the hard part in Aldous' proof, follows from Theorem~\ref{thm:mix-hit} and the equivalence $\tl \asymp \tct$ (see \cite[Theorem~20.3]{LevPerWil}). For the other direction we give a new proof in Section~\ref{sec:newproof}.
}
\end{remark}

\begin{remark}\rm{
In Section~\ref{sec:robustness} we present an application of Theorem~\ref{thm:mix-hit} to robustness of the mixing time. Namely, we show that for a finite binary tree, assigning bounded conductances to the edges can only change the mixing time of the lazy random walk on the tree by a bounded factor. However, we note that not all graphs are robust to conductance perturbations. A counterexample is given by Ding and Peres in~\cite{DingPeres}.
}

\end{remark}
To avoid periodicity and near-periodicity issues, one often considers the lazy version of a discrete time Markov chain. In the following theorem we show that averaging over two successive times suffices, i.e.\ $\displaystyle \tl \asymp \taver\left(\tfrac{1}{4}\right)$ where
\[
\taver(\epsilon) = \min\left\{t \geq 0: \max_x \left\|\frac{P^t(x,\cdot)+P^{t+1}(x,\cdot)}{2} - \pi \right\|\leq \epsilon  \right\}.
\]
For notational convenience we will simply write $\taver$ when $\epsilon=1/4$.

\begin{theorem}\label{thm:ave-lazy}
There exist universal positive constants $c$ and $c'$ so that for every reversible Markov chain
\[
c \tl \leq \taver \leq c' \tl.
\]
\end{theorem}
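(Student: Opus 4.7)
The plan is to prove the two inequalities $c\tl \le \taver$ and $\taver \le c'\tl$ separately. The first follows easily from Theorem~\ref{thm:mix-hit} via a hitting-time argument; the second is the substantive direction.

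For $c\tl \le \taver$, I would invoke Theorem~\ref{thm:mix-hit} and reduce to showing $\thit(\alpha) \le C\taver$ for some $\alpha \in (1/4, 1/2)$. Fix $\alpha = 3/8$. For any state $x$ and any $A$ with $\pi(A) \ge \alpha$, setting $t = \taver$ and using the defining property of $\taver$ yields
\[
\tfrac{1}{2}\bigl(P^t(x,A) + P^{t+1}(x,A)\bigr) \;\ge\; \pi(A) - \tfrac{1}{4} \;\ge\; \tfrac{1}{8},
\]
so $\P_x(\tau_A \le t+1) \ge 1/8$. Iterating this bound via the strong Markov property at multiples of $t+1$ gives $\E_x[\tau_A] \le 8(\taver + 1)$, whence $\thit(3/8) \le C\taver$. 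Theorem~\ref{thm:mix-hit} then delivers $\tl \le c_{3/8} \thit(3/8) \le c_{3/8} C\taver$.

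For the harder direction $\taver \le c'\tl$, the plan is to compare the averaging and the lazy chain spectrally and then pass to total variation. Writing $(P^t+P^{t+1})/2 = P^t P_L$ and expanding in the common orthonormal eigenbasis of the commuting self-adjoint operators $P$ and $P_L$, the averaging has eigenvalue multiplier $\lambda_i^t(1+\lambda_i)/2$ while $P_L^T$ has $((1+\lambda_i)/2)^T$. The prefactor $(1+\lambda_i)/2$ in the averaging precisely suppresses near-periodic modes $\lambda_i \approx -1$, while for $\lambda_i \in [0,1]$ the elementary inequality $\lambda_i \le (1+\lambda_i)/2$ gives $\lambda_i^{2t}((1+\lambda_i)/2)^2 \le ((1+\lambda_i)/2)^{2t+2}$, a favourable per-mode comparison at $t \asymp \tl$. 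One would then try to convert this $L^2$ comparison into a TV bound using the $L^2$-to-TV inequality after a doubling step and the submultiplicativity of $d_2$ for the reversible lazy chain.

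The main obstacle is the negative part of the spectrum at the TV level: for $\lambda_i$ near $-1$ the averaging contributes only the polynomial factor $((1+\lambda_i)/2)^2$, whereas the lazy contributes the exponentially small $((1+\lambda_i)/2)^{2T}$, so no clean per-eigenvalue $L^2$-domination holds; the total contribution is only bounded by $P_L^2(x,x)/\pi(x)$, which is chain-dependent. I would circumvent this by instead proving $\taver \le C\thit(\alpha)$ for some $\alpha < 1/2$, adapting the stopping-time construction behind the upper bound in Theorem~\ref{thm:mix-hit} so that the output time is randomised over two consecutive integers rather than being a single deterministic value, and then concluding $\taver \le C\thit(\alpha) \asymp \tl$ by Theorem~\ref{thm:mix-hit} again.
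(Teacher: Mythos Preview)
Your proposal has a genuine circularity problem. Both directions lean on Theorem~\ref{thm:mix-hit}, but in this paper Theorem~\ref{thm:mix-hit} is \emph{derived from} Theorem~\ref{thm:ave-lazy}: the proof of $\tl\asymp\thit(\alpha)$ goes via $\tl\asymp\tg$ (Theorem~\ref{thm:tgtlrev}), which rests on $\tl\asymp\tstop$ (Corollary~\ref{cor:tltstop}), which in turn is obtained precisely from the three lemmas (Lemmas~\ref{lem:tavertstop}, \ref{lem:lazystop}, \ref{lem:lazave}) that together constitute Theorem~\ref{thm:ave-lazy}. In particular the hard direction $\tl\le c_\alpha\thit(\alpha)$ that you invoke for your lower bound is not available until after Theorem~\ref{thm:ave-lazy} is established. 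So as written, the argument assumes what it is trying to prove.

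The paper's route avoids this entirely. For $\tl\lesssim\taver$ (Lemma~\ref{lem:lazave}) it gives a short, self-contained coupling argument valid for \emph{all} chains, reversible or not: write a $\mathrm{Bin}(3t,1/2)$ time as $T+(Z_1-t)$ where $T$ is uniform on $\{t,t+1\}$ and $Z_1\sim\mathrm{Bin}(3t-1,1/2)$ is independent, then use Claim~\ref{cl:decr} and an exponential tail bound on $\{Z_1<t\}$. No hitting times, no reversibility, no Theorem~\ref{thm:mix-hit}. Your hitting-time argument for this direction is correct in isolation, but needlessly indirect and, here, circular.

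For $\taver\lesssim\tl$ the paper goes through $\tstop$: first $\taver\le c_1\tstop$ (Lemma~\ref{lem:tavertstop}) and then $\tstop\le c_2\tl$ (Lemma~\ref{lem:lazystop}, via separation distance). Your instinct to ``adapt the stopping-time construction so that the output time is randomised over two consecutive integers'' is pointing at the right object, but the real content is the $L^2$ estimate of Lemma~\ref{lem:average}: one conditions on the value and time of a mean-optimal stationary stopping time $T$, defines $f_y(u)=\tfrac12\P_x(X_u=y,T\le L)+\tfrac12\P_x(X_{u+1}=y,T\le L)$, and shows that for some $u\le L+U$ one has $\sum_y f_y(u)^2/\pi(y)\le 1+L/U$. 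The two-step averaging is exactly what makes the discrete-time argument go through --- as the remark after Lemma~\ref{lem:average} explains, with $f_y(u)=\P_x(X_u=y,T\le L)$ alone one only gets a bound of $2+L/U$, which is useless. Your spectral sketch correctly identifies that negative eigenvalues are the obstruction; the paper's averaging lemma is precisely the device that neutralises them at the $L^2$ level, but this is not something one can read off from a per-eigenvalue comparison, and your fallback plan does not supply the missing estimate.
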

The problem of relating $\taver$ to the mixing time $\tct$ of the continuous-time chain was raised in Aldous-Fill~\cite{AldFill}, Chapter 4, Open Problem 17. Since   $\tct\asymp \tl$ (see \cite[Theorem~20.3]{LevPerWil}),
Theorem \ref{thm:ave-lazy} gives a partial answer to that problem.

\section{Preliminaries and further equivalences}\label{sec:prel}
In this section we first introduce some more notions of mixing. We will then
state some further equivalences between them mostly in the reversible case and will prove them in later sections. These equivalences will be useful for the proofs of the main results, but are also of independent interest.

The following notion of mixing was first introduced by Aldous in \cite{Aldousmixing} in the continuous time case and later studied in discrete time by Lov{\'a}sz and Winkler in \cite{LovWineff,LWmixing}. It is defined as follows:
\begin{align}\label{eq:deftstop}
\tstop = \max_{x} \min\{\E_x[\Lambda_x]: \Lambda_x \text{ is a stopping time s.t. } \P_x(X_{\Lambda_x} \in \cdot) = \pi(\cdot) \}.
\end{align}
The definition does not make it clear why stopping times achieving the minimum always exist. We will recall the construction of such a stopping time in Section~\ref{sec:stopproof}.

The mixing time of the lazy chain and the average mixing are related to $\tstop$ in the following way.
\begin{lemma}\label{lem:tavertstop}
There exists a uniform positive constant $c_1$ so that for every reversible Markov chain
\[
\taver \leq c_1\tstop.
\]
\end{lemma}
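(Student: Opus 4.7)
The plan is to use a mean-optimal stopping time from the definition of $\tstop$ together with an independent coin flip designed to mirror the averaging built into $\taver$. Fix $x$ achieving the maximum in $\tstop$ and let $\Lambda$ be a mean-optimal stopping time for $(X_t)$ started at $x$ with $X_\Lambda\sim\pi$ and $\E_x[\Lambda]=\tstop$ (the existence of such a $\Lambda$ is recalled in Section~\ref{sec:stopproof}). Introduce a uniform $U\in\{0,1\}$ independent of the chain and set $T=\Lambda+U$. Since $\pi P=\pi$, we have $X_T\sim\tfrac12(\pi+\pi P)=\pi$ and $\E_x[T]\le\tstop+1$; the role of $U$ is to reproduce the lazy factor $P_L=(I+P)/2$ that is implicit in the definition of $\taver$.

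The heart of the proof is the bound
\[
\big\|\bar P^t(x,\cdot)-\pi\big\| \;\le\; \P_x(T>t), \qquad \bar P^t := \tfrac12\bigl(P^t+P^{t+1}\bigr)=P^t P_L.
\]
I would prove this by conditioning on the value of $\Lambda$ and using the strong Markov property. On $\{\Lambda=s\le t,\,X_s=z\}$, the contribution to $\bar P^t(x,\cdot)$ is $\bar P^{t-s}(z,\cdot)$, while the contribution to $\pi(\cdot)=\P_x(X_T\in\cdot)$ is $P_L(z,\cdot)=\bar P^0(z,\cdot)$. These pieces do not agree termwise, but aggregated over $s\le t$ they collapse, up to an error of order $\P_x(T>t)$, using that the conditional laws $\nu_s(\cdot)=\P_x(X_s\in\cdot\mid\Lambda=s)$ satisfy $\sum_s\P_x(\Lambda=s)\,\nu_s=\pi$ (the marginal identity $X_\Lambda\sim\pi$). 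Reversibility is what makes the argument close: it forces the chain to have period at most $2$, and the single $P_L$ factor coming from the averaging annihilates exactly the $-1$ eigenspace, which is the only periodic obstruction.

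Once the displayed bound is established, Markov's inequality gives $\P_x(T>t)\le(\tstop+1)/t$, so choosing $t=8(\tstop+1)$ forces $\|\bar P^t(x,\cdot)-\pi\|\le 1/8<1/4$, and taking the maximum over $x$ yields $\taver\le 8(\tstop+1)\le c_1\tstop$. The main obstacle is the displayed inequality itself: the classical strong-stationary-time proof of the analogous bound $\|P^t(x,\cdot)-\pi\|\le\P_x(\Lambda>t)$ needs $X_\Lambda\perp\Lambda$, which a merely mean-optimal $\Lambda$ need not satisfy, so the individual $(s,z)$-contributions to $\bar P^t(x,\cdot)$ are not individually matched by $\pi$. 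The delicate step is therefore to harvest the extra $P_L$ factor produced by the averaging and to aggregate the errors across $s$ (rather than bounding each one by the triangle inequality) so that the marginal stationarity of $X_\Lambda$ produces exactly the required cancellation.
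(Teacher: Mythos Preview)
Your proposal has a genuine gap at exactly the point you flag as ``the main obstacle'': the displayed inequality
\[
\big\|\bar P^t(x,\cdot)-\pi\big\| \;\le\; \P_x(T>t)
\]
is asserted but not proved, and the sketch you give does not close. You correctly note that the standard argument requires a \emph{strong} stationary time (i.e.\ $X_\Lambda$ independent of $\Lambda$), which a mean-optimal $\Lambda$ need not be. Your proposed fix---``aggregate the errors across $s$'' so that the marginal identity $\sum_s \P_x(\Lambda=s)\,\nu_s=\pi$ produces cancellation---does not work as stated: writing
\[
\bar P^t(x,\cdot)-\pi(\cdot)=\sum_{s\le t}\P_x(\Lambda=s)\,\bigl[\nu_s P^{t-s}P_L-\nu_s\bigr](\cdot)+(\text{tail}),
\]
the summands $\nu_s P^{t-s}P_L-\nu_s$ are signed measures of total variation up to $2$ each, and nothing in the marginal identity forces their \emph{sum} to have small total variation. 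The marginal constraint is a single linear relation among the $\nu_s$; it does not control the mixture of their $P^{t-s}P_L$-pushforwards. Moreover, your diagnosis of the role of reversibility is off: killing the $-1$ eigenspace via one $P_L$ factor handles periodicity, but the obstruction here is not periodicity---it is that the conditional laws $\nu_s$ are not individually $\pi$, and that problem is present already for aperiodic (even lazy) chains.

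By contrast, the paper's proof does not attempt a pointwise-in-$t$ coupling inequality at all. It fixes $L,U$, truncates at $\{T\le L\}$, sets $f_y(u)=\tfrac12\P_x(X_u=y,T\le L)+\tfrac12\P_x(X_{u+1}=y,T\le L)$, and proves (Lemma~\ref{lem:average}) an $L^2(\pi)$ bound
\[
\min_{0\le u<U}\;\sum_y \frac{f_y(L+u)^2}{\pi(y)}\;\le\;1+\frac{L}{U}
\]
by expanding the square and using the reversibility identity $\sum_y\pi(y)^{-1}P^a(z_1,y)P^b(z_2,y)=\pi(z_2)^{-1}P^{a+b}(z_1,z_2)$ to collapse the double sum into transition probabilities that then \emph{average over $u$}. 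The averaging over two consecutive times is what makes the $u$-average telescope without a stray factor of~$2$ (see the remark following Lemma~\ref{lem:average}); Cauchy--Schwarz then converts the $L^2$ bound to total variation, and Markov's inequality on $\P_x(T>L)$ finishes. So reversibility enters in an essentially $L^2$/spectral way, not merely through the period, and the argument is genuinely different in kind from the strong-stationary-time bound you are trying to mimic.
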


\begin{lemma}\label{lem:lazystop}
There exists a uniform positive constant $c_2$ so that for every reversible Markov chain
\[
\tstop \leq c_2 \tl.
\]
\end{lemma}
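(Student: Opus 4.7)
The plan is to construct, for each starting state $x$, a randomised stopping time $\Lambda_x$ of the $P$-chain with $\P_x(X_{\Lambda_x}\in\cdot)=\pi(\cdot)$ and $\E_x[\Lambda_x]=O(\tl)$. I will do this by first building a strong stationary time for the lazy chain---whose expectation is controlled by the lazy separation distance---and then transferring it to the $P$-chain via a simple coupling.

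Run $(X_t)$ from $x$ alongside an independent i.i.d.\ Bernoulli$(1/2)$ sequence $(\xi_r)_{r\geq 1}$, and set $N_s:=\xi_1+\cdots+\xi_s$ and $\til X_s:=X_{N_s}$; then $(\til X_s)$ realises the lazy chain started at $x$. The Aldous--Diaconis filling scheme produces a randomised stopping time $\til\Lambda$ for $(\til X_s)$ with $\til X_{\til\Lambda}\sim\pi$ and
\[
\P_x(\til\Lambda>s)\;=\;s_L(x,s)\;:=\;\max_y\Big(1-\frac{P_L^s(x,y)}{\pi(y)}\Big),
\]
so that $\E_x[\til\Lambda]=\sum_{s\geq 0}s_L(x,s)$. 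Setting $\Lambda:=N_{\til\Lambda}$ gives a randomised stopping time for the $P$-chain with $X_\Lambda=\til X_{\til\Lambda}\sim\pi$ and $\Lambda\leq\til\Lambda$; it is thus admissible in~\eqref{eq:deftstop} and satisfies $\E_x[\Lambda]\leq\E_x[\til\Lambda]$.

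The remaining task is to show $\sum_{s\geq 0}s_L(x,s)=O(\tl)$, which rests on two facts about the reversible lazy chain. First, separation is controlled by TV: the identity $P_L^{2s}(x,y)/\pi(y)=\sum_z P_L^s(x,z)P_L^s(y,z)/\pi(z)$ (valid by reversibility) together with $P_L^s(x,z)P_L^s(y,z)\geq\min(P_L^s(x,z),P_L^s(y,z))^2$ and Cauchy--Schwarz yields
\[
\frac{P_L^{2s}(x,y)}{\pi(y)}\;\geq\;\Big(\sum_z\min\big(P_L^s(x,z),P_L^s(y,z)\big)\Big)^{\!2}\;\geq\;(1-2d_L(s))^2,
\]
the last inequality using $\sum_z\min(P_L^s(x,z),P_L^s(y,z))=1-\|P_L^s(x,\cdot)-P_L^s(y,\cdot)\|\geq 1-2d_L(s)$. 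Hence $s_L(2s)\leq 1-(1-2d_L(s))^2\leq 4d_L(s)$, and taking $s$ to be a sufficiently large constant multiple of $\tl$ (using submultiplicativity of $d_L$) yields some $T=O(\tl)$ with $s_L(T)\leq 1/4$. Second, separation is itself submultiplicative, $s_L(u+v)\leq s_L(u)s_L(v)$, which follows by writing $P_L^u(x,\cdot)=(1-s_L(x,u))\pi+s_L(x,u)\nu$ and propagating through the next $v$ steps. Combining these, $s_L(kT)\leq 4^{-k}$ and
\[
\sum_{s\geq 0}s_L(x,s)\;\leq\;T\sum_{k\geq 0}s_L(x,kT)\;\leq\;T\sum_{k\geq 0}4^{-k}\;=\;O(\tl),
\]
and taking the maximum over $x$ gives $\tstop\leq c_2\tl$.

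The main obstacle is the first of the two facts above: turning a TV bound ($d_L(s)\leq 1/4$) into a pointwise bound ($P_L^{2s}(x,y)\geq c\pi(y)$ for every pair $x,y$) is the step where reversibility is essential. Everything else---the filling-scheme construction of $\til\Lambda$, the lazy-to-$P$ coupling via $(\xi_r)$, and the submultiplicativity of $s_L$---is routine and does not use reversibility.
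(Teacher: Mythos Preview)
Your proof is correct and follows essentially the same route as the paper: pass to the lazy chain, bound a stationary time there by the lazy separation distance (the paper does this via a direct geometric construction in Lemma~\ref{lem:tstoptsep}, while you invoke the Aldous--Diaconis optimal strong stationary time and sum its tail probabilities), control lazy separation by lazy total variation using the reversibility identity (your Cauchy--Schwarz step is exactly Lemma~\ref{lem:distance}), and transfer back to the base chain via the Binomial coupling $\Lambda=N_{\til\Lambda}$ (Lemma~\ref{lem:equiv}). The only cosmetic difference is that the paper bounds $\tstop^L\leq 4\tsep^L$ in one shot, whereas you bound $\E_x[\til\Lambda]=\sum_s s_L(x,s)$ by combining submultiplicativity and monotonicity of separation.
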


We will prove Lemma~\ref{lem:tavertstop} in Section~\ref{sec:stopproof}. Lemma~\ref{lem:lazystop} was proved by Aldous in~\cite{Aldousmixing}, but we include the proof in Section~\ref{sec:proofs} for completeness.

In Section~\ref{sec:proofs} we will show that for any chain we have the following:
\begin{lemma}\label{lem:lazave}
For every $\epsilon \leq 1/4$, there exists a positive constant $c_3$ so that for every Markov chain we have that
\[
\tl(\epsilon) \leq c_3 \taver(\epsilon).
\]
\end{lemma}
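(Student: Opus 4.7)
Let $t^* := \taver(\epsilon)$. The plan is to express the lazy kernel at time $N$ as a convex combination of the two-step averages $\mu_j(x,\cdot):=\frac{P^j(x,\cdot)+P^{j+1}(x,\cdot)}{2}$ that appear in the definition of $\taver$. Starting from $\bigl(\tfrac{P+I}{2}\bigr)^N = 2^{-N}\sum_{k=0}^N \binom{N}{k} P^k$ and applying Pascal's identity $\binom{N}{k}=\binom{N-1}{k-1}+\binom{N-1}{k}$, a one-line re-indexing yields
\[
P_L^N(x,\cdot) \;=\; 2^{-(N-1)}\sum_{j=0}^{N-1}\binom{N-1}{j}\,\mu_j(x,\cdot) \;=\; \E\bigl[\mu_M(x,\cdot)\bigr],\qquad M\sim\bin(N-1,1/2).
\]
Probabilistically: conditional on the first $N-1$ lazy coin flips producing $M$ heads, the $N$th step adds $0$ or $1$ extra non-lazy step with equal probability.

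Next I would observe that $\|\mu_j(x,\cdot)-\pi\|$ is non-increasing in $j$: since $\mu_{j+1}(x,\cdot) = \mu_j(x,\cdot)P$ and $\pi P=\pi$, this is just the contractivity of stochastic matrices in total variation. Together with the defining inequality of $\taver$, it gives $\|\mu_j(x,\cdot)-\pi\|\le\epsilon$ for every $j\ge t^*$. Combined with the trivial bound $\|\mu_j(x,\cdot)-\pi\|\le 1$ for $j<t^*$ and convexity of total variation, the mixture representation above yields
\[
\|P_L^N(x,\cdot)-\pi\| \;\le\; \epsilon + \bP(M<t^*),\qquad M\sim\bin(N-1,1/2).
\]

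Taking $N=Ct^*$ for a sufficiently large constant $C$, a Chernoff bound on $M$ makes $\bP(M<t^*)$ arbitrarily small, so that $d_L(N)\le 3/8$ with $N=O(t^*)$ (the degenerate case of very small $t^*$ is absorbed into the multiplicative constant). Finally I would invoke the standard submultiplicativity inequality $d_L(kN)\le (2 d_L(N))^k$ (valid for any Markov chain): since $2 d_L(N)\le 3/4<1$, iterating $k=O(\log(1/\epsilon))$ times gives $d_L(kN)\le \epsilon$, and hence $\tl(\epsilon)\le c_3(\epsilon)\,\taver(\epsilon)$. The crux is the mixture identity in the first paragraph: the naive bound $\|P_L^N(x,\cdot)-\pi\|\le 2^{-N}\sum_k\binom{N}{k}\|P^k(x,\cdot)-\pi\|$ is useless for (near-)periodic chains, since individual $P^k(x,\cdot)$ need not be close to $\pi$, so consecutive times must be paired \emph{into} $\mu_j$ before the triangle inequality is applied.
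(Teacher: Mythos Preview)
Your proof is correct and follows essentially the same route as the paper's: express the lazy kernel at a large time as a mixture of the two--step averages $\mu_j$ at a $\bin(N-1,1/2)$ time, use monotonicity of $\|\mu_j-\pi\|$, bound the binomial lower tail by Chernoff, and finish with submultiplicativity to pass from $d_L\le 3/8$ to $d_L\le\epsilon$. Your direct derivation of the mixture identity $P_L^N=\E[\mu_M]$ via Pascal's rule is in fact a bit cleaner than the paper's decomposition $Z=T+(Z_1-t)$ with the positivity correction $(Z_1-t)_+$, but the two arguments are the same in substance.
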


\begin{definition}\rm{
We say that two mixing parameters $s$ and $r$ are \textbf{equivalent} for a class of Markov chains $\mathcal{M}$ and write $s \asymp r$, if there exist universal positive constants $c$ and $c'$ so that $cs \leq r \leq c's$ for every chain in $\mathcal{M}$.
We also write $s\lesssim r$ and $s\gtrsim r$ if there exist universal positive constants $c_1$ and $c_2$ such that $s \leq c_1 r$ and $s \geq c_2 r$ respectively.
}
\end{definition}
\begin{proof}[\textbf{Proof of Theorem~\ref{thm:ave-lazy}}]
Lemmas~\ref{lem:tavertstop}, \ref{lem:lazystop} and \ref{lem:lazave} give the desired equivalence between $\taver$ and $\tl$.
\end{proof}

Combining the three lemmas above we get the following:
\begin{corollary}\label{cor:tltstop}
For every reversible Markov chain $\tl$ and $\tstop$ are equivalent.
\end{corollary}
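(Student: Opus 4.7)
The plan is to simply chain the three lemmas that have just been stated, since two of the three required inequalities are immediate and the third arises from composing the remaining two.

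First, I would note that the upper bound $\tstop \leq c_2\, \tl$ is exactly the content of Lemma~\ref{lem:lazystop}, so nothing further is required in that direction. This is the ``easy half'' established by Aldous in \cite{Aldousmixing}, and reversibility is used here.

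For the reverse inequality $\tl \lesssim \tstop$, the idea is to route through $\taver$. Specifically, Lemma~\ref{lem:lazave} applied with $\epsilon = 1/4$ gives
\[
\tl \leq c_3\, \taver,
\]
valid for any Markov chain. Then Lemma~\ref{lem:tavertstop}, which does use reversibility, gives
\[
\taver \leq c_1\, \tstop.
\]
Composing these yields $\tl \leq c_1 c_3\, \tstop$, i.e.\ $\tstop \geq (c_1 c_3)^{-1}\, \tl$.

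Together with the bound from Lemma~\ref{lem:lazystop} this establishes $\tl \asymp \tstop$ for every reversible chain, which is the statement of the corollary. There is no real obstacle here — the corollary is essentially a bookkeeping consequence of the three preceding lemmas, and all the genuine work (verifying Lemmas~\ref{lem:tavertstop}, \ref{lem:lazystop}, and \ref{lem:lazave}) is deferred to later sections. The only point worth flagging is that Lemma~\ref{lem:lazave} holds without reversibility, whereas the other two lemmas require it; this is consistent with the corollary being stated only for reversible chains.
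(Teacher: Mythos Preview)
Your proposal is correct and matches the paper's approach exactly: the corollary is stated immediately after the three lemmas with the remark ``Combining the three lemmas above we get the following,'' and your chaining $\tl \leq c_3\,\taver \leq c_1 c_3\,\tstop$ together with $\tstop \leq c_2\,\tl$ is precisely the intended argument.
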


\begin{remark}\rm{
Aldous in \cite{Aldousmixing} was the first to show the equivalence between the mixing time of a continuous time reversible chain and $\tstop$.
}
\end{remark}
We will now define the notion of mixing in a geometric time. The idea of using this notion of mixing to prove Theorem~\ref{thm:mix-hit} was suggested to us by Oded Schramm (private communication June 2008). This notion is also of independent interest, because of its properties that we will prove in this section.

For each $t$, let $Z_t$ be a Geometric random variable taking values in $\{1,2,\ldots\}$ of mean $t$ and success probability $t^{-1}$. We first define
\[
d_G(t) = \max_x\|\P_x(X_{Z_t} = \cdot) - \pi \|.
\]
The geometric mixing is then defined as follows
\[
\tg=\tg(1/4) = \min\{t \geq 0: d_G(t)\leq 1/4 \}.
\]
We start by establishing the monotonicity property of $d_G(t)$.
\begin{lemma}\label{lem:dgmon}
The total variation distance $d_G(t)$ is decreasing as a function of $t$.
\end{lemma}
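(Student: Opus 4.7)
My plan is to derive the monotonicity from a data-processing inequality, by showing that for any $t' > t \ge 1$ the distribution $\mu^x_{t'}(\cdot) := \P_x(X_{Z_{t'}} \in \cdot)$ arises from $\mu^x_t$ through a Markov kernel that fixes $\pi$. Concretely, I plan to exhibit a Markov kernel $Q$ with $\pi Q = \pi$ and $\mu^x_{t'} = \mu^x_t Q$; since total variation is contracted by Markov kernels, this will give $\|\mu^x_{t'} - \pi\| \le \|\mu^x_t - \pi\|$, and taking the maximum over $x$ will yield $d_G(t') \le d_G(t)$.

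The core task is to produce, for each $t' > t$, a $\Z_{\ge 0}$-valued random variable $W$ that is independent of $Z_t$ and satisfies $Z_{t'} \eqdist Z_t + W$; then $Q(z,\cdot) := \E[P^W(z,\cdot)]$ is a convex combination of the $\pi$-preserving matrices $P^l$ and so works. The cleanest route I would take is a thinning construction: run a Bernoulli$(1/t)$ process and, at each success, independently retain it with probability $t/t'$; the retained successes form a Bernoulli$(1/t')$ process. Then $Z_t$ is the first Bernoulli$(1/t)$ success time and $Z_{t'}$ is the first retained success time, so $W := Z_{t'} - Z_t \ge 0$ depends only on the trials strictly after time $Z_t$ and is therefore independent of $Z_t$.

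As an analytic sanity check, one may also verify existence of $W$ via probability generating functions. Writing $f_s$ for the PGF of $Z_s$, the ratio $f_{t'}(z)/f_t(z)$ simplifies to $(t/t') \cdot (1-(1-1/t)z)/(1-(1-1/t')z)$, which expands as a power series with non-negative coefficients summing to $1$ whenever $t' \ge t \ge 1$; this is the PGF of the desired $W$. The remaining identity $\mu^x_{t'} = \mu^x_t Q$ then follows by conditioning on $(Z_t, W)$ and is routine. I do not foresee a genuine obstacle: the whole argument rests on spotting this thinning/convolution decomposition of the two geometric waiting times, after which the data-processing inequality closes the proof.
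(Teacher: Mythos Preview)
Your proposal is correct and follows essentially the same route as the paper: the paper couples $Z_t$ and $Z_{t+1}$ via $Z_s=\min\{i:U_i\le 1/s\}$ for i.i.d.\ uniforms $(U_i)$, which is precisely your thinning construction, and then invokes Claim~\ref{cl:decr}, which is exactly the Markov-kernel contraction you cite. The only cosmetic difference is that the paper steps from $t$ to $t+1$ while you go directly from $t$ to any $t'>t$, and you add the PGF verification as a sanity check.
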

Before proving this lemma, we note the following standard fact.
\begin{claim}\label{cl:decr}
Let $T$ and $T'$ be two independent positive random variables, also independent of the Markov chain. Then for all $x$
\[
\|\P_x(X_{T+T'}=\cdot) - \pi \| \leq \|\P_x(X_T =\cdot) - \pi \|.
\]
\end{claim}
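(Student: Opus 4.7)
The plan is to exploit two standard facts: (i) the total variation distance between two probability measures is a convex function of each argument, and (ii) any stochastic kernel is a contraction in total variation, and since $\pi$ is $P$-invariant, convolving with $P^s$ can only bring an arbitrary distribution closer to $\pi$.

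Concretely, I would first condition on the value of $T'$. By the tower property and independence of $T'$ from both $T$ and the chain, we can write
\[
\P_x(X_{T+T'}=\cdot) = \sum_{s\geq 1}\P(T'=s)\,\P_x(X_{T+s}=\cdot) = \sum_{s\geq 1}\P(T'=s)\,\mu_T P^s,
\]
where $\mu_T(\cdot)=\P_x(X_T=\cdot)$ (again using independence of $T$ from the chain and the Markov property to justify $\P_x(X_{T+s}=\cdot)=\mu_T P^s$). Subtracting $\pi = \sum_s \P(T'=s)\pi$ and applying convexity of total variation yields
\[
\|\P_x(X_{T+T'}=\cdot)-\pi\| \leq \sum_{s\geq 1}\P(T'=s)\,\|\mu_T P^s - \pi\|.
\]

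Next I would show $\|\mu_T P^s-\pi\|\leq \|\mu_T-\pi\|$ for each fixed $s\geq 1$. Since $\pi P^s=\pi$, this reduces to the contractivity of a stochastic kernel in total variation: for any signed measure $\nu$ with $\nu(\text{state space})=0$, one has $\|\nu P\|\leq \|\nu\|$ (a one-line consequence of the definition $\|\nu\|=\tfrac12\sum_y|\nu(y)|$ and non-negativity of $P$'s entries), and iterating gives the same bound for $P^s$. Plugging this back in,
\[
\|\P_x(X_{T+T'}=\cdot)-\pi\| \leq \sum_{s\geq 1}\P(T'=s)\,\|\mu_T-\pi\| = \|\P_x(X_T=\cdot)-\pi\|,
\]
which is exactly the claimed inequality.

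No step is really an obstacle; the only subtlety is being careful about independence when writing $\P_x(X_{T+s}=\cdot)=\mu_T P^s$, which needs that $T$ is independent of the chain so that the Markov property can be applied at the (possibly random but independent) time $T$. Once this is set up, Lemma \ref{lem:dgmon} should follow by applying Claim \ref{cl:decr} to a clever decomposition of a geometric of mean $t+1$ as the sum of a geometric of mean $t$ and an independent nonnegative integer variable (or directly by a coupling), though that is the next lemma's job rather than this claim's.
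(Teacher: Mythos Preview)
Your proof is correct. The paper itself does not actually prove this claim: it is introduced with ``we note the following standard fact'' and left without argument. Your approach---conditioning on $T'$, using convexity of total variation to pass the expectation inside, and then invoking the contractivity of $P^s$ in total variation together with $\pi P^s=\pi$---is exactly the standard justification the paper is implicitly appealing to.

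One small remark: although the claim says ``positive'', the paper's own application of it in the proof of Lemma~\ref{lem:dgmon} uses $T'=Z_{t+1}-Z_t$, which takes the value $0$ with positive probability. Your argument goes through verbatim with the sum starting at $s\geq 0$ (since $P^0=I$ trivially satisfies $\|\mu_T P^0-\pi\|\le\|\mu_T-\pi\|$), so this is not an issue with your proof but rather a slight imprecision in the statement.
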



\begin{proof}[\textbf{Proof of Lemma~\ref{lem:dgmon}}]
We first describe a coupling between the two Geometric random variables, $Z_t$ and $Z_{t+1}$. Let $(U_i)_{i\geq 1}$ be a sequence of i.i.d.\ random variables uniform on $[0,1]$.
We now define
\begin{align*}
Z_t &= \min\left\{i\geq 1: U_i \leq \frac{1}{t}\right\} \text{ and} \\
Z_{t+1} &= \min\left\{i\geq 1: U_i \leq \frac{1}{t+1}\right\}.
\end{align*}
It is easy to see that
\[
Z_{t+1} - Z_t \ \ \text{ is independent of } \ \ Z_t.
\]
Indeed, $\P(Z_{t+1}=Z_t|Z_t) = \frac{t}{t+1}$ and similarly for every $k \geq 1$ we have $\displaystyle \P(Z_{t+1}=Z_t+k|Z_t)= \left(\tfrac{t}{t+1}\right)^{k-1}\left(\tfrac{1}{t+1}\right)^2$.
\newline
We can thus write $Z_{t+1} = (Z_{t+1}-Z_t) + Z_t$, where the two terms are independent.

Claim~\ref{cl:decr} and the independence of $Z_{t+1}-Z_t$ and $Z_t$ give the desired monotonicity of $d_G(t)$.
\end{proof}

\begin{lemma}\label{lem:tgtstop}
For all chains we have that
\[
\tg \leq 4 \tstop + 1.
\]
\end{lemma}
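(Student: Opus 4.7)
The plan is to use the stationary stopping time guaranteed by the definition of $\tstop$ together with the memoryless property of the geometric distribution. Fix a starting state $x$ and let $\Lambda = \Lambda_x$ be a stopping time achieving the infimum in \eqref{eq:deftstop}, so $\P_x(X_\Lambda \in \cdot) = \pi$ and $\E_x[\Lambda] \leq \tstop$. I will also take $Z_t$ to be independent of the chain (equivalently, built from a sequence of independent Bernoulli($1/t$) coins, independent of $(X_s)$).

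The key observation is that on the event $\{Z_t \geq \Lambda\}$ one has $X_{Z_t} \sim \pi$. To see this, condition on $\Lambda = k$; since $Z_t$ is independent of $\Lambda$ and geometric,
\[
\P(Z_t = k + j \mid Z_t \geq \Lambda = k) = \frac{(1/t)(1-1/t)^{k+j-1}}{(1-1/t)^{k-1}} = \tfrac{1}{t}\bigl(1-\tfrac{1}{t}\bigr)^j, \qquad j \geq 0,
\]
a distribution independent of $k$. Hence, conditional on $\{Z_t \geq \Lambda\}$, the random variable $N := Z_t - \Lambda$ is geometric on $\{0,1,\ldots\}$ with parameter $1/t$ and is independent of $\F_\Lambda$. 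By the strong Markov property, $(X_{\Lambda + j})_{j \geq 0}$ is a Markov chain started from $X_\Lambda \sim \pi$; running it for an independent number of steps $N$ preserves $\pi$, so $X_{Z_t} = X_{\Lambda + N} \sim \pi$ on $\{Z_t \geq \Lambda\}$.

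The remaining step is to bound the bad event $\{Z_t < \Lambda\}$. Conditioning on $\Lambda$ and using Bernoulli's inequality,
\[
\P_x(Z_t < \Lambda) = \E_x\bigl[1 - (1 - 1/t)^{\Lambda - 1}\bigr] \leq \frac{\E_x[\Lambda]}{t} \leq \frac{\tstop}{t}.
\]
The observation above yields the coupling bound $\|\P_x(X_{Z_t} \in \cdot) - \pi\| \leq \P_x(Z_t < \Lambda) \leq \tstop/t$: simply couple $X_{Z_t}$ with a $\pi$-distributed $Y$ by setting $Y = X_{Z_t}$ on $\{Z_t \geq \Lambda\}$ and drawing $Y$ independently from $\pi$ otherwise. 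Taking $t = 4\tstop + 1$ makes the right-hand side strictly less than $1/4$, uniformly in $x$, so $\tg \leq 4\tstop + 1$.

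I expect no serious obstacle here; the only delicate point is justifying that conditioning on $\{Z_t \geq \Lambda\}$ really preserves both the geometric form of the overshoot $Z_t - \Lambda$ and its independence from $\F_\Lambda$, which is why I want to condition on $\Lambda = k$ first and use the independence of the coin sequence from the chain.
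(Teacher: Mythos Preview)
There is a genuine gap. You correctly verify that, conditional on $\{Z_t\ge\Lambda\}$, the overshoot $N=Z_t-\Lambda$ is geometric and independent of $\F_\Lambda$. The error is in asserting that $X_\Lambda\sim\pi$ under that same conditioning. Since $\P_x(Z_t\ge\Lambda\mid\F_\Lambda)$ is a decreasing function of $\Lambda$, conditioning on $\{Z_t\ge\Lambda\}$ reweights $\F_\Lambda$ in favour of small values of $\Lambda$; as $X_\Lambda$ and $\Lambda$ are in general dependent, the conditional law of $X_\Lambda$ is a tilted version of $\pi$, not $\pi$ itself. Running from this tilted law for an independent geometric time does not give $\pi$, so your coupling variable $Y$, whose law is $\P_x(X_{Z_t}=\cdot,\,Z_t\ge\Lambda)+\P_x(Z_t<\Lambda)\,\pi(\cdot)$, need not be $\pi$-distributed. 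For a concrete counterexample take the two-state chain with $P(1,2)=P(2,1)=1$, start at $1$, and let $\Lambda$ be uniform on $\{1,2\}$ (so $X_\Lambda\sim\pi=(1/2,1/2)$ unconditionally); one computes $\P_1(X_\Lambda=2\mid Z_t\ge\Lambda)=t/(2t-1)\ne 1/2$, and likewise $\P_1(X_{Z_t}\in\cdot\mid Z_t\ge\Lambda)\ne\pi$.

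The paper's proof avoids conditioning: it uses that $X_{\tau+Z_t}\sim\pi$ \emph{unconditionally} (because $X_\tau\sim\pi$ and $Z_t$ is independent of the chain), and then bounds $\|\P_x(X_{Z_t}=\cdot)-\pi\|=\|\P_x(X_{Z_t}=\cdot)-\P_x(X_{\tau+Z_t}=\cdot)\|$ by the total variation distance between the laws of $Z_t$ and $Z_t+\tau$, which is at most $\E_x[\tau]/t$ by Claim~\ref{cl:total}. Your coin-sequence idea can be made to work along the same lines: replace your $Y$ by $X_{Z_t^{(\Lambda)}}$, where $Z_t^{(\Lambda)}$ is the first success among the coins at or after time $\Lambda$. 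Then $Z_t^{(\Lambda)}-\Lambda$ is independent of $(X,\Lambda)$ \emph{without any conditioning}, so $X_{Z_t^{(\Lambda)}}\sim\pi$; and $Z_t^{(\Lambda)}=Z_t$ on $\{Z_t\ge\Lambda\}$, which yields exactly the coupling bound $\|\P_x(X_{Z_t}=\cdot)-\pi\|\le\P_x(Z_t<\Lambda)$ you were aiming for.
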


The converse of Lemma~\ref{lem:tgtstop} is true for reversible chains in a more general setting. Namely, let $N_t$ be a random variable independent of the Markov chain and of mean $t$. We define the total variation distance $d_N(t)$ in this setting as follows:
\[
d_N(t)= \max_x \|\P_x(X_{N_t} = \cdot) - \pi\|.
\]
Defining $t_N=t_N(1/4)=\min\{t \geq 0: d_N(t)\leq 1/4 \}$ we have the following:
\begin{lemma}\label{lem:tstopgen}
There exists a positive constant $c_4$ such that for all reversible chains
\[
\tstop \leq c_4 t_N.
\]
In particular, $\tstop \leq c_4 \tg$.
\end{lemma}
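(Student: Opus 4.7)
The plan is to reduce to Lemma~\ref{lem:lazystop} applied to a subsampled chain, and then lift the resulting stationary stopping times back to $X$ via Wald's identity. Set $t = t_N$, let $(N_t^{(j)})_{j\geq 1}$ be i.i.d.\ copies of $N_t$ independent of the Markov chain, put $\sigma_k = N_t^{(1)} + \dots + N_t^{(k)}$, and define $Y_k = X_{\sigma_k}$. Then $Y$ is a Markov chain with transition matrix $M = \sum_{k \geq 1}\P(N_t = k) P^k$; it is $\pi$-reversible (as each $P^k$ is), and by the very definition of $t = t_N$ it satisfies $d_M(1) \leq 1/4$, where $d_M(j) = \max_x \|M^j(x,\cdot) - \pi\|$.

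The main step (and where reversibility is essential) is showing that the lazy mixing time of $M$ is bounded by a universal constant. By Claim~\ref{cl:decr}, $d_M(j) \leq d_M(1) \leq 1/4$ for every $j \geq 1$, and the standard submultiplicativity $d(s+t) \leq 2\,d(s)\,d(t)$ then yields $d_M(j) \leq 2^{-(j+1)}$ inductively. Averaging over $\mathrm{Bin}(k, 1/2)$ steps gives
\[
d_{M_L}(k) \;\leq\; 2^{-k} + \tfrac{1}{2}\bigl[(3/4)^k - 2^{-k}\bigr],
\]
which falls below $1/4$ for some absolute $k_0$ (e.g.\ $k_0 = 4$). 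Applying Lemma~\ref{lem:lazystop} to the reversible chain $M$ therefore yields $\tstop^{(M)} \leq c_2 k_0$.

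Finally, for each starting state $x$ pick a stopping time $\Lambda^{(M)}_x$ of $Y$ with $Y_{\Lambda^{(M)}_x} \sim \pi$ and $\E_x[\Lambda^{(M)}_x] \leq c_2 k_0$, and set $\Lambda_x = \sigma_{\Lambda^{(M)}_x}$; then $X_{\Lambda_x} \sim \pi$. In the filtration $\F_k = \sigma(N_t^{(1)}, Y_1, \dots, N_t^{(k)}, Y_k)$, the variable $\Lambda^{(M)}_x$ is a stopping time and $N_t^{(j)}$ is independent of $\F_{j-1}$ with mean $t$, so Wald's identity gives
\[
\E_x[\Lambda_x] \;=\; \E[N_t] \cdot \E_x[\Lambda^{(M)}_x] \;\leq\; c_2 k_0\, t_N,
\]
which proves $\tstop \leq c_4 t_N$ with $c_4 = c_2 k_0$; specialising to $N_t = Z_t$ gives the ``in particular'' claim. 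The one remaining subtlety is that $\Lambda_x$ uses the auxiliary randomness $(N_t^{(j)})$ and is therefore only a \emph{randomised} stopping time for $X$; this is handled by the standard fact (implicit in the Lov\'asz--Winkler filling-rule construction) that the infimum in~\eqref{eq:deftstop} is unchanged when randomised stopping times are admitted.
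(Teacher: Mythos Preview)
Your argument is correct, and it is close in spirit to the paper's but packaged differently. Both proofs pass to the subsampled chain with transition kernel $M(x,y)=\P_x(X_{N_t}=y)$ (the paper uses two copies of $N_t$ per step, forming $Q(x,y)=\P_x(X_{V_t}=y)$), establish that $\tstop$ for this auxiliary chain is bounded by an absolute constant, and then lift a stationary stopping time back to $X$ via Wald's identity. The difference lies in how that constant bound is obtained. The paper converts $d_N(t_N)\le 1/4$ directly into a separation bound $s_N(t_N)\le 3/4$ via Lemma~\ref{lem:septotal}, so that $Q(x,\cdot)\ge \tfrac14\pi(\cdot)$; the minorisation construction of Lemma~\ref{lem:tsep} then produces a stationary time with mean~$4$, giving $\tstop\le 8\,t_N$. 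You instead bound the lazy mixing time $\tl^{(M)}$ of the subsampled chain by an explicit constant $k_0$ and then invoke Lemma~\ref{lem:lazystop} as a black box to get $\tstop^{(M)}\le c_2 k_0$. Since the proof of Lemma~\ref{lem:lazystop} itself goes through separation (Lemma~\ref{lem:tstoptsep} together with Remark~\ref{rem:consequence}), the two arguments rest on the same reversibility input; yours is more modular, while the paper's is more direct and yields a sharper constant.

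One minor point: the filtration $\F_k$ in your Wald step should also include the auxiliary uniforms used to build $\Lambda_x^{(M)}$ (e.g.\ from the filling rule), but this is a trivial enlargement and changes nothing.
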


We will give the proofs of Lemmas~\ref{lem:tgtstop} and \ref{lem:tstopgen} in Section~\ref{sec:geomix}.

Combining Corollary~\ref{cor:tltstop} with Lemmas~\ref{lem:tgtstop} and \ref{lem:tstopgen} we deduce:
\begin{theorem}\label{thm:tgtlrev}
For a reversible Markov chain $\tg$ and $\tl$ are equivalent.
\end{theorem}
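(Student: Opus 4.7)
The plan is to chain together the three results listed just before the theorem statement, no new ingredients needed.

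First I would recall from Corollary~\ref{cor:tltstop} that in the reversible setting $\tl \asymp \tstop$, so it suffices to prove the two bounds $\tg \lesssim \tstop$ and $\tstop \lesssim \tg$. The first of these is handed to us by Lemma~\ref{lem:tgtstop}, which gives $\tg \le 4\tstop + 1$ for \emph{every} chain, reversible or not. Combined with Corollary~\ref{cor:tltstop} this yields a constant $C_1$ with $\tg \le C_1 \tl + 1$ for every reversible chain.

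For the other direction I would invoke Lemma~\ref{lem:tstopgen} with the particular choice $N_t = Z_t$, the Geometric random variable of mean $t$ used in the definition of $\tg$. The lemma applies since $Z_t$ is independent of the chain and has mean $t$, so the general bound $\tstop \le c_4 t_N$ specializes to $\tstop \le c_4 \tg$; chaining with Corollary~\ref{cor:tltstop} again gives $\tl \le C_2 \tg$ for a universal constant $C_2$.

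To finish, I would absorb the additive constant in $\tg \le C_1 \tl + 1$: since any non-trivial chain satisfies $\tl \ge 1$, we may replace $C_1 \tl + 1$ by $(C_1 + 1)\tl$, and then setting $c = 1/C_2$ and $c' = C_1 + 1$ we obtain
\[
c\,\tl \ \le\ \tg \ \le\ c'\,\tl,
\]
which is the claimed equivalence.

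There is no real obstacle here; the work was done in the preceding lemmas. The only small subtlety is keeping track of the additive $+1$ in Lemma~\ref{lem:tgtstop} and verifying that $Z_t$ falls under the hypotheses of Lemma~\ref{lem:tstopgen} (independence from the chain and mean $t$), both of which are immediate from the definitions.
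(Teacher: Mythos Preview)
Your proposal is correct and matches the paper's own argument exactly: the paper simply states that the theorem follows by combining Corollary~\ref{cor:tltstop} with Lemmas~\ref{lem:tgtstop} and~\ref{lem:tstopgen}. Your extra care in absorbing the additive $+1$ via $\tl\ge 1$ is a detail the paper leaves implicit, but otherwise the approaches are identical.
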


We end this section by stating and proving a result relating $\tmix$ and $\taver$ for any Markov chain. First by the triangle inequality it is clear that always $\taver \leq \tmix$. For the converse we have the following:
\begin{proposition}\label{prop:tavertmix}
Let $0<\delta < 1$. There exists a positive constant $c_5$ so that if $P$ is a transition matrix satisfying $P(x,x)\geq \delta$, for all $x$, then
\[
\tmix \leq c_5 \left(\taver \vee \frac{1}{\delta(1-\delta)} \right).
\]
\end{proposition}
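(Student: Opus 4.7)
The plan is to combine the Doeblin decomposition $P=\delta I+(1-\delta)Q$ with the triangle inequality linking $P^t$ to the two-step average $\tfrac12(P^t+P^{t+1})$, and then to boost the resulting estimate via standard submultiplicativity. The holding probability $\delta$ is what gives quantitative control on $\|P^{t+1}(x,\cdot)-P^t(x,\cdot)\|$, and this is the step that produces the $\tfrac{1}{\delta(1-\delta)}$ term.

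Writing $Q:=(P-\delta I)/(1-\delta)$, which is stochastic by hypothesis, I would realize the $P$-chain by, at each step, holding with probability $\delta$ and taking a $Q$-step with probability $1-\delta$, independently. Letting $N_t\sim\bin(t,1-\delta)$ count the $Q$-steps taken by time $t$, we have the mixture representation
\[ P^t(x,\cdot)=\sum_{k=0}^t\P(N_t=k)\,Q^k(x,\cdot), \]
so convexity of total variation gives $\|P^{t+1}(x,\cdot)-P^t(x,\cdot)\|\le\|N_{t+1}-N_t\|$. Coupling $N_{t+1}=N_t+B$ with an independent Bernoulli $B$ of parameter $1-\delta$ produces the identity $\P(N_{t+1}=k)-\P(N_t=k)=(1-\delta)(\P(N_t=k-1)-\P(N_t=k))$; unimodality of the binomial pmf then collapses the telescoping sum to
\[ \|N_{t+1}-N_t\|=(1-\delta)\max_k\P(N_t=k)\le \frac{C_0}{\sqrt{1+t\delta(1-\delta)}}, \]
for an absolute constant $C_0$ coming from Stirling's estimate for the maximum of the binomial pmf. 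In particular, for $t\ge c_5/(\delta(1-\delta))$ with $c_5$ large, this bound becomes arbitrarily small.

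Setting $T:=c_5\bigl(\taver\vee\tfrac{1}{\delta(1-\delta)}\bigr)$ and applying the triangle inequality
\[ \|P^T(x,\cdot)-\pi\|\le\tfrac12\|P^{T+1}(x,\cdot)-P^T(x,\cdot)\|+\Bigl\|\tfrac{P^T+P^{T+1}}{2}(x,\cdot)-\pi\Bigr\|, \]
the second term is at most $1/4$ since $T\ge\taver$, and the first is at most $C_0/(2\sqrt{c_5})$ by the previous step. For $c_5$ large enough this yields $d(T)\le1/3$, say. The standard submultiplicativity of $\bar d(t):=\max_{x,y}\|P^t(x,\cdot)-P^t(y,\cdot)\|$ together with $\bar d\le 2d$ then gives $\bar d(kT)\le (2d(T))^k\le(2/3)^k$, and taking $k$ so that $(2/3)^k<1/4$ gives $\tmix\le kT$; the proposition follows after absorbing $k$ into the constant.

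The main obstacle is the quantitative anti-concentration estimate on $\max_k\P(N_t=k)$, which is exactly what encodes the ``near-periodicity'' scale $\tfrac{1}{\delta(1-\delta)}$ that $\taver$ itself cannot see; everything else is a routine triangle inequality plus a standard submultiplicativity boost.
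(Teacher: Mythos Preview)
Your proof is correct and follows the same global architecture as the paper: the triangle inequality
\[
\|P^t(x,\cdot)-\pi\|\le\Bigl\|\tfrac12(P^t+P^{t+1})(x,\cdot)-\pi\Bigr\|+\tfrac12\|P^{t+1}(x,\cdot)-P^t(x,\cdot)\|,
\]
the Doeblin decomposition $P=\delta I+(1-\delta)Q$, a bound of the form $\|P^{t+1}(x,\cdot)-P^t(x,\cdot)\|\le C/\sqrt{t\delta(1-\delta)}$, and a final submultiplicativity boost. The difference lies entirely in how the middle estimate is obtained.

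The paper constructs an explicit \emph{dynamic} coupling of the two chains $X_t=Z_{N'_t}$ and $Y_{t+1}=Z_{N_{t+1}}$ (with $Z$ a $Q$-chain), shows that the coupling time is dominated by the first time a lazy simple random walk on $\Z$ hits $1$, and invokes the classical $O(1/\sqrt{n})$ tail bound for that hitting time together with a Chernoff bound on the number of non-lazy steps. Your argument instead uses the \emph{static} mixture representation $P^t=\E[Q^{N_t}]$, reduces directly to $\|P^{t+1}(x,\cdot)-P^t(x,\cdot)\|\le\|N_{t+1}-N_t\|_{\mathrm{TV}}$ (this is the paper's Claim~\ref{cl:function} applied to the $Q$-chain), and evaluates that total variation exactly via unimodality of the binomial pmf as $(1-\delta)\max_k\P(N_t=k)$. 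This is cleaner: it avoids the auxiliary random-walk hitting-time estimate and the Chernoff step, replacing them with a single Stirling bound on the binomial mode. Both routes yield the same $C/\sqrt{t\delta(1-\delta)}$ and hence the same conclusion, but yours isolates more transparently that the $\tfrac{1}{\delta(1-\delta)}$ scale is precisely the anti-concentration scale of $\mathrm{Bin}(t,1-\delta)$.
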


\begin{proof}[\textbf{Proof}]
By the triangle inequality we have that for all $x$
\[
\|P^t(x,\cdot) - \pi\| \leq \left\|\tfrac{1}{2}P^t(x,\cdot)+ \tfrac{1}{2}P^{t+1}(x,\cdot) - \pi \right\| + \left\|\tfrac{1}{2}P^t(x,\cdot) - \tfrac{1}{2}P^{t+1}(x,\cdot) \right\|.
\]
Thus it suffices to show that for all starting points $x$ and all times $t$ there exists a positive constant $c_6$ such that
\begin{align}\label{eq:sufineq}
\|P^t(x,\cdot) - P^{t+1}(x,\cdot)\| \leq \frac{c_6}{\sqrt{t\delta(1-\delta)}},
\end{align}
since $\displaystyle \tmix(\epsilon) \leq c_7\tmix\left(\tfrac{4}{3}\epsilon\right)$, for a positive constant $c_7$ and $\displaystyle \epsilon \leq \tfrac{1}{4}$.

We will now construct a coupling $(X_t,Y_{t+1})$ of $P^t(x,\cdot)$ with $P^{t+1}(x,\cdot)$ such that
\[
\P(X_t \neq Y_{t+1}) \leq \frac{c_6}{\sqrt{t\delta(1-\delta)}}.
\]
Since for all $x$ we have that $P(x,x)\geq \delta$, we can write
\[
P= \delta I + (1-\delta)Q,
\]
for a stochastic matrix $Q$. Let $Z$ be a chain with transition matrix $Q$ that starts from $x$. Let $N_t$ and $N_t'$ be independent and both distributed according to $\mathrm{Bin}(t,1-\delta)$.
We are now going to describe the coupling for the two chains, $X$ and $Y$. Let $(W_s)_{s \geq 1}$ and $(W_s')_{s\geq 1}$ be i.i.d.\ random variables with $\P(W_1=0)=1-\P(W_1=1)=\delta$. 
We define $\displaystyle N_t=\sum_{s=1}^{t}W_s$ and define a process $(N'_t)$ by setting $N_0=0$
and 
\begin{align*}
N'_t =
\begin{cases}
\displaystyle \sum_{s=1}^{t} W_s & \mbox{if } N_{t-1}\neq N_t,   \\ N'_{t+1} &\mbox{if }  N_{t-1} = N'_t.
\end{cases}
\end{align*}
It is straightforward to check that $N'$ is a Markov chain with transition matrix 
\[
A(n,n) = \delta = 1 - A(n,n+1) \ \text{ for all } n\in \N.
\]
 Hence, if for all $t$ we set $X_t = Z_{N'_t}$ and $Y_t=Z_{N_t}$, then it follows that both $X$ and $Y$ are Markov chains with transition matrix $P$.  We now let $\tau = \min\{t\geq 0: X_t=Y_{t+1} \}$. If $W_1=0$, i.e.\
$Y_1=X_0=x$, then $\tau=0$. Otherwise, on the event $W_1=1$, we can bound $\tau$ by
\[
\tau \leq \min\left\{t\geq 0: N'_t=1+ \sum_{s=2}^{t+1}W_s\right\}.
\]
We thus see that $\tau$ is stochastically dominated by the first time that $N'_t-\sum_{s=2}^{t+1}W_s$ hits $1$.
But $N'_t-\sum_{s=2}^{t+1}W_s$ is a symmetric random walk on the real line with transition probabilities $p(k,k+1)=p(k,k-1)=\delta (1-\delta)$ for all $k$. By time $t$ this random walk has moved $L$ number of times, where
\[
L \sim \mathrm{Bin}(t,2\delta(1-\delta)).
\]
By the Chernoff bound for Binomial random variables we get that
\begin{align}\label{eq:chern}
\P\left(L< \frac{t\delta(1-\delta)}{2}\right) \leq e^{-9t\delta(1-\delta)/16}.
\end{align}
Therefore we have that
\[
\P(\tau>t) \leq \P\left(L<\frac{t\delta(1-\delta)}{2}\right) + \P\left(\tau>t, L \geq \frac{t\delta(1-\delta)}{2}\right) \leq   e^{-9t\delta(1-\delta)/16} + \P_0\left(T_1 > \frac{t\delta(1-\delta)}{2}\right),
\]
where $T_1$ denotes the first hitting time of $1$ for a simple random walk on $\Z$. By a classical result for simple random walks on $\Z$ (see for instance \cite[Theorem~2.17]{LevPerWil})
\[
\P_0\left(T_1>\frac{t\delta(1-\delta)}{2}\right) \leq \frac{12\sqrt{2}}{\sqrt{t\delta(1-\delta)}}
\]
and this concludes the proof.
\end{proof}

\begin{remark}\rm{
We note that the upper bound given in Proposition~\ref{prop:tavertmix} is tight, in the sense that both $\taver$ and $\frac{1}{\delta}$ can be attained.
Indeed, for lazy chains $\tmix$ and $\taver$ are equivalent. This follows from the observation above that $\taver \leq \tmix$ and \cite[Proposition~5.6]{LevPerWil}.
For $\delta\leq 1/2$, consider the following transition matrix
$\left(
  \begin{array}{cc}
    \delta & 1-\delta \\
    1-\delta & \delta \\
  \end{array}
\right).$
It is easy to see that in this case the mixing time is of order $\frac{1}{\delta}$.

}
\end{remark}

\section{Stopping times and a bound for $\taver$}\label{sec:stopproof}
In this section we will first give the construction of a stopping time $T$ that achieves stationarity, i.e.\ for all $x,y$ we have that $\P_x(X_T=y)=\pi(y)$, and also for a fixed $x$ attains the minimum in the definition of $\tstop$ in \eqref{eq:deftstop}, i.e.
\begin{align}\label{eq:meanopt}
\E_x[T] = \min\{\E_x[\Lambda_x]: \Lambda_x \text{ is a stopping time s.t. } \P_x(X_{\Lambda_x} \in \cdot) = \pi(\cdot) \}.
\end{align}
The stopping time that we will construct is called the \textbf{filling rule} and it was first discussed in \cite{BaxChac}. This construction can also be found in \cite[Chapter~9]{AldFill}, but we include it here for completeness.

First for any stopping time $S$ and any starting distribution $\mu$ one can define a sequence of vectors
\begin{align}\label{eq:stopping}
\theta_x(t) = \P_\mu(X_t=x,S\geq t), \ \ \sigma_x(t) = \P_\mu(X_t=x, S=t).
\end{align}
These vectors clearly satisfy
\begin{align}\label{eq:thetasigma}
0 \leq \sigma(t) \leq \theta(t), \ \ (\theta(t) - \sigma(t)) \mathbf{P}= \theta(t+1)\ \forall t; \ \theta(0)=\mu.
\end{align}

We can also do the converse, namely given vectors $(\theta(t),\sigma(t);t\geq0)$ satisfying
\eqref{eq:thetasigma} we can construct a stopping time $S$ satisfying
\eqref{eq:stopping}.
We want to define $S$ so that
\begin{align}\label{eq:defTpr}
\P(S=t| S>t-1, X_t=x,X_{t-1}=x_{t-1},\ldots,X_0=x_0) = \frac{\sigma_x(t)}{\theta_x(t)}.
\end{align}
Formally we define the random variable $S$ as follows: Let $(U_i)_{i \geq 0}$ be a sequence of independent random variables uniform on $[0,1]$. We now define $S$ via
\begin{align*}
S = \inf\left\{t \geq 0: U_t \leq \frac{\sigma_{X_t}(t)}{\theta_{X_t}(t)} \right\}.
\end{align*}
From this definition it is clear that \eqref{eq:defTpr} is satisfied and that
$S$ is a stopping time with respect to an enlarged filtration containing also the random variables $(U_i)_{i \geq 0}$, namely $\mathcal{F}_s\nolinebreak =\nolinebreak \sigma(X_0,U_0,\ldots,X_s,U_s)$.
Also, equations \eqref{eq:stopping} are satisfied. Indeed, setting $x_t=x$ we have
\begin{align*}
\P_{\mu}(X_t = x, S\geq t) = \sum_{x_0,x_1,\ldots,x_{t-1}} \mu(x_0)\prod_{k=0}^{t-1}\left(1-\frac{\sigma_{x_{k}}(k)}{\theta_{x_k}(k)}\right)P(x_k,x_{k+1}) = \theta_x(t),
\end{align*}
since $\theta_y(0) = \mu(y)$ for all $y$ and also $\theta(t+1) = (\theta(t) - \sigma(t))\mathbf{P}$ so cancelations happen. Similarly we get the other equality of \eqref{eq:stopping}.

We are now ready to give the construction of the \textbf{filling rule} $T$. Before defining it formally, we give the \textbf{intuition} behind it. Every state $x$ has a quota which is equal to $\pi(x)$. Starting from an initial distribution $\mu$ we want to calculate inductively the probability that we have stopped so far at each state.
When we reach a new state, we decide to stop there if doing so does not increase the probability of stopping at that state above the quota. Otherwise we stop there with the right probability to exactly fill the quota and we continue with the complementary probability.

We will now give the \textbf{rigorous} construction
by defining the sequence of vectors
$(\theta(t),\sigma(t);t\geq \nolinebreak 0)$ for any starting distribution $\mu$. If we start from $x$, then simply $\mu=\delta_x$. First we set $\theta(0)=\mu$. We now introduce another sequence of vectors $(\Sigma(t);t\geq -1)$. Let $\Sigma_x(-1) = 0$ for all $x$.
We define inductively
\begin{align*}
\sigma_x(t) =
  \begin{cases}
    \theta_x(t), & \mbox{if } \Sigma_x(t-1) + \theta_x(t) \leq \pi(x); \\
    \pi(x) - \Sigma_x(t-1), & \mbox{otherwise.}
  \end{cases}
\end{align*}
Then we let $\Sigma_x(t) = \sum_{s \leq t} \sigma_x(s)$ and define $\theta(t+1)$ via~\eqref{eq:thetasigma}.
Then $\sigma$ will satisfy \eqref{eq:stopping} and $\Sigma_x(t) = \P_\mu(X_T=\nolinebreak x,T\leq \nolinebreak t)$. Also note from the description above it follows that $\Sigma_x(t) \leq \pi(x)$, for all $x$ and all $t$. Thus we get that
\[
\P_\mu(X_T=x) = \lim_{t \to \infty} \Sigma_x(t) \leq \pi(x)
\]
and since both $\P_\mu(X_T=\cdot)$ and $\pi(\cdot)$ are probability distributions, we get that they must be equal. Hence the above construction yielded a stationary stopping time. It only remains to prove the mean-optimality \eqref{eq:meanopt}.
Before doing so we give a definition.
\begin{definition}\label{def:halting}
\rm{
Let $S$ be a stopping time. A state $z$ is called a \textbf{halting} state for the stopping time if $S \leq T_z$ a.s.\, where $T_z$ is the first hitting time of state $z$.
}
\end{definition}
We will now show that the filling rule has a halting state and then the following theorem gives the mean-optimality.

\begin{theorem}[Lov{\'a}sz and Winkler]\label{thm:lovwin}
Let $\mu$ and $\rho$ be two distributions.
Let $S$ be a stopping time such that $\P_\mu(X_S = x)=\rho(x)$ for all $x$. Then $S$ is mean optimal in the sense that
\[
\E_\mu[S] = \min\{\E_\mu[U]: U \text{ is a stopping time s.t. } \P_\mu(X_{U} \in \cdot) = \rho(\cdot) \}
\]
if and only if it has a halting state.
\end{theorem}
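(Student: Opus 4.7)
The plan is to work with the vector of expected visit counts before stopping. For any stopping time $U$ with $\E_\mu[U]<\infty$ and $\P_\mu(X_U\in\cdot)=\rho(\cdot)$, set $n_x(U)=\E_\mu\bigl[\sum_{t=0}^{U-1}\1(X_t=x)\bigr]$, so that $\E_\mu[U]=\sum_x n_x(U)$. Comparing the expected occupation measure at times $0,\dots,U-1$ with that at times $1,\dots,U$ (the latter equals both $n(U)P$ by the Markov property and $n(U)-\mu+\rho$ by endpoint bookkeeping) yields the balance identity
\[
n(U)(I-P)=\mu-\rho.
\]
If $S$ and $U$ both transport $\mu$ to $\rho$, then $n(U)-n(S)$ lies in the left null space of $I-P$, which by irreducibility of $P$ is spanned by $\pi$; hence $n(U)-n(S)=c\,\pi$ for some $c\in\R$, and summing over states gives $\E_\mu[U]-\E_\mu[S]=c$.

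For sufficiency, if $S$ has a halting state $z$, then $X_t\ne z$ for every $t<S$, so $n_z(S)=0$. The identity forces $c\pi(z)=n_z(U)\ge 0$, whence $\E_\mu[U]\ge\E_\mu[S]$ for every competitor $U$ and $S$ is mean-optimal. For necessity, I apply this sufficiency direction to the filling rule $T$ constructed earlier in the section, which requires the auxiliary fact that $T$ itself has a halting state. Granting that, any mean-optimal $S$ achieving $\rho$ satisfies $\E_\mu[S]=\E_\mu[T]$, so $c=0$ in $n(T)-n(S)=c\pi$, whence $n_z(S)=n_z(T)=0$ for the halting state $z$ of $T$, and therefore $S\le T_z$ almost surely.

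The main obstacle is producing a halting state for the filling rule, which I would handle by a case split. If some $z$ is never saturated in finite time, i.e.\ $\Sigma_z(t)<\pi(z)$ for every $t$, then the first branch in the recursion defining $\sigma_z$ applies at every step, so $\sigma_z(t)=\theta_z(t)$ throughout and $T$ stops on every visit to $z$. Otherwise set $t_x^*=\min\{t:\Sigma_x(t)=\pi(x)\}$ and $t^*=\max_x t_x^*<\infty$; the recursion forces $\sigma_x(t)=0$ for every $x$ and every $t>t^*$, and since $T<\infty$ a.s.\ this gives $T\le t^*$ a.s., so
\[
\sum_x\sigma_x(t^*)=\P_\mu(T\ge t^*)=\sum_x\theta_x(t^*).
\]
Combined with the pointwise inequality $\sigma_x(t^*)\le\theta_x(t^*)$, this forces equality for every $x$. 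Choosing any $z$ with $t_z^*=t^*$, we obtain $\sigma_z(t)=\theta_z(t)$ for all $t$: the first branch must apply for $t<t^*$ (else $\Sigma_z(t)=\pi(z)$, contradicting $t<t_z^*$), the just-derived equality holds at $t=t^*$, and both sides vanish for $t>t^*$. Thus $z$ is a halting state for $T$, completing the argument.
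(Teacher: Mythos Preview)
Your proof is correct and follows essentially the same approach as the paper: the occupation-measure balance identity $n(U)(I-P)=\mu-\rho$, the consequence that $n(U)-n(S)=c\pi$, and the deduction of both directions from the existence of a halting state for the filling rule are all identical to the paper's argument. The only organizational difference is that the paper establishes the halting state for the filling rule in the paragraph \emph{preceding} the theorem (via a short contradiction: if $\P_\mu(T>t,\,T_z=t)>0$ for the last-saturated state $z$, then all quotas are already filled by time $t$, forcing $\P_\mu(T\le t)=1$), whereas you fold a correct but slightly more elaborate case-split version of that fact into the body of the proof.
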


Now we will prove that there exists $z$ such that $T \leq T_z$ a.s. For each $x$ we define
\[
t_x = \min\{t: \Sigma_x(t) = \pi(x) \} \leq \infty.
\]
Take $z$ such that $\displaystyle t_z = \max_{x} t_x \leq \infty$. We will show that $T \leq T_z$ a.s.
If there exists a $t$ such that $\P_\mu(T>t , T_z = t)>0$, then $\Sigma_x(t) = \pi(x)$, for all $x$, since the state $z$ is the last one to be filled.
So if the above probability is positive, then we get that
\[
\P_\mu(T \leq t) = \sum_x \Sigma_x(t) =1,
\]
which is a contradiction.
Hence, we obtain that $\P_\mu(T >t, T_z=t) = 0$ and thus by summing over all $t$ we deduce that $\P_\mu(T \leq T_z)=1$.

\begin{proof}[\textbf{Proof of Theorem~\ref{thm:lovwin}}]
We define the exit frequencies for $S$ via $\displaystyle \nu_x = \E_\mu\left[\sum_{k=0}^{S-1}\1(X_k=x)\right]$, for all $x$.
\newline Since $\P_\mu(X_S=\cdot)=\rho(\cdot)$, we can write
\[
\E_\mu\left[\sum_{k=0}^{S}\1(X_k=x)\right] = \E_\mu\left[\sum_{k=0}^{S-1}\1(X_k=x)\right] + \rho(x)=\nu_x+\rho(x).
\]
We also have that
\[
\E_\mu\left[\sum_{k=0}^{S}\1(X_k=x)\right] = \mu(x) + \E_\mu\left[\sum_{k=1}^{S}\1(X_k=x)\right].
\]
Since $S$ is a stopping time, it is easy to see that
\[
\E_\mu\left[\sum_{k=1}^{S}\1(X_k=x)\right] = \sum_y \nu_y P(y,x).
\]
Hence we get that
\begin{align}\label{eq:freq}
\nu_x+\rho(x) = \mu(x) + \sum_y \nu_y P(y,x).
\end{align}
Let $T$ be another stopping time with $\P_\mu(X_T=\cdot)=\rho(\cdot)$ and let $\nu_x'$ be its exit frequencies. Then they would satisfy \eqref{eq:freq}, i.e.
\[
\nu_x'+\rho(x) = \mu(x) + \sum_y \nu_y' P(y,x).
\]
Thus if we set $d=\nu'-\nu$, then $d$ as a vector satisfies
\[
d = d P,
\]
and hence $d$ must be a multiple of the stationary distribution, i.e.\ for a constant $\alpha$ we have that
$d = \alpha \pi$.

Suppose first that $S$ has a halting state, i.e.\ there exists a state $z$ such that $\nu_z=0$. Therefore we get that $\nu_z'= \alpha \pi(z)$, and hence $\alpha \geq 0$. Thus $\nu'_x \geq \nu_x$ for all $x$ and
\[
\E_\mu[T] = \sum_{x} \nu'(x) \geq \sum_{x} \nu_x = \E_\mu[S],
\]
and hence proving mean-optimality.

We will now show the converse, namely that if $S$ is mean-optimal then it should have a halting state. The filling rule was proved to have a halting state and thus is mean-optimal. Hence using the same argument as above we get that $S$ is mean optimal if and only if $\displaystyle \min_x \nu_x = 0$, which is the definition of a halting state.
\end{proof}

Before giving the proof of Lemma~\ref{lem:tavertstop} we state and prove a preliminary result.
\begin{lemma}\label{lem:average}
Let $X$ be a reversible Markov chain on the state space $\Gamma$ and let $L,U$ be positive constants. Let $T$ be a stopping time that achieves stationarity
starting from $x$, i.e.\ $\P_x(X_T = y) = \pi(y)$, for all $y$. For all $y$ and all times $u$ we define $f_y(u)= \frac{1}{2} \P_x(X_u=y, T \leq L) + \frac{1}{2} \P_x(X_{u+1}=y, T \leq L)$. Then there exists $u \leq L + U$ such that
\[
\sum_{y} \frac{f_y(u)^2}{\pi(y)} \leq 1 + \frac{L}{U}.
\]
\end{lemma}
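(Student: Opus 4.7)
The plan is to reduce the chi-squared quantity $\sum_y f_y(u)^2/\pi(y)$ to a spectral computation in $L^2(\pi)$. First I would observe that for $u \geq L$ the event $\{T \leq L\}$ is $\F_L$-measurable, so the Markov property gives $g(u+1, y) = \sum_z g(u, z) P(z, y)$, where $g(u, y) := \P_x(X_u = y, T \leq L)$. Passing to the density $\phi(u, y) := g(u, y)/\pi(y)$ and invoking reversibility, this becomes the operator identity $\phi(u+1) = P\phi(u)$ on $L^2(\pi)$, and $\psi(u) := f(u,\cdot)/\pi = \tfrac12(\phi(u) + \phi(u+1)) = Q\phi(u)$ with $Q := (I+P)/2$. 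The target is $G(u) := \sum_y f_y(u)^2/\pi(y) = \|\psi(u)\|_{L^2(\pi)}^2$; since $P$ and $Q$ are commuting $L^2(\pi)$-contractions, $G$ is non-increasing in $u$ on $\{L, L+1, \ldots\}$.

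The key quantitative input is $\|\phi(L)\|_{L^2(\pi)}^2 \leq L + 1$. I would derive it by writing $g(L, y) = \sum_{s=0}^{L} q_s(y)$ with $q_s(y) := \P_x(X_L = y, T = s) = (\mu_s P^{L-s})(y)$ and $\mu_s(z) := \P_x(T = s, X_T = z)$. Cauchy--Schwarz over $s$ gives $g(L, y)^2 \leq (L+1)\sum_s q_s(y)^2$, and the reversible-chain contraction $\|\nu P\|_{1/\pi} \leq \|\nu\|_{1/\pi}$ reduces the estimate to
\[
\|\phi(L)\|^2 \;\leq\; (L+1)\sum_s \|\mu_s\|_{1/\pi}^2 \;\leq\; L+1,
\]
the last inequality following from $\mu_s(z) \leq \pi(z)$ and $\sum_s \mu_s(z) = \pi(z)$, which together give $\sum_s \mu_s(z)^2 \leq \pi(z)^2$ and hence $\sum_s \|\mu_s\|_{1/\pi}^2 \leq \sum_z \pi(z) = 1$.

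To finish, I would average $G(u)$ over the window $u \in \{L, L+1, \ldots, L+U-1\}$. Writing $\psi(u) = c\mathbf{1} + \tilde{\psi}(u)$ with $c := \P_x(T \leq L)$ and $\tilde\psi \perp \mathbf{1}$, and using $\psi(u) = QP^{u-L}\phi(L)$, the spectral decomposition of $P$ yields
\[
\sum_{u=L}^{L+U-1} G(u) \;=\; U c^2 \;+\; \sum_{i \geq 2} \Bigl(\tfrac{1+\lambda_i}{2}\Bigr)^2 \tfrac{1 - \lambda_i^{2U}}{1 - \lambda_i^2}\, c_i^2,
\]
where $c_i := \langle \phi(L), v_i\rangle_\pi$ and $\sum_i c_i^2 = \|\phi(L)\|^2 \leq L+1$. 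The hard part will be to bound this sum by $U + L$: one must balance the lazy factor $\bigl(\tfrac{1+\lambda_i}{2}\bigr)^2$, which suppresses modes with $\lambda_i$ near $-1$, against the geometric-sum factor $\tfrac{1-\lambda_i^{2U}}{1-\lambda_i^2}$, which is capped by $U$ for $\lambda_i$ near $1$. The extra structure that the coefficients $c_i$ inherit from the stopping time (namely, that a mode with $\lambda_i$ near $1$ can only carry large $c_i^2$ if $c$ is correspondingly small, since $\phi(L)$ being close to the constant $c$ is what having $T \leq L$ achieve stationarity means) should be essential. Once $\sum_u G(u) \leq U + L$ is in hand, pigeonhole delivers some $u \in \{L, \ldots, L+U-1\} \subseteq \{0, \ldots, L+U\}$ with $G(u) \leq (U + L)/U = 1 + L/U$.
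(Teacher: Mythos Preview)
Your setup through the contraction argument is correct, and the bound $\|\phi(L)\|_{L^2(\pi)}^2 \le L+1$ is valid. The gap is in the ``hard part'' you flag: the spectral sum you wrote down cannot be bounded by $U+L$ using only $\sum_i c_i^2 \le L+1$. Indeed, for each $i\ge 2$ the coefficient $\bigl(\tfrac{1+\lambda_i}{2}\bigr)^2\tfrac{1-\lambda_i^{2U}}{1-\lambda_i^2}$ can be as large as $U$ (take $\lambda_i\uparrow 1$), so all you can conclude is $\sum_{u} G(u)\le U(L+1)$, which by pigeonhole gives only $G(u)\le L+1$ --- useless compared with the target $1+L/U$. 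Your proposed rescue, that slow modes must carry small $c_i^2$ because $\phi(L)$ is ``close to the constant $c$'', is not justified: the stopping time $T$ achieves stationarity, but the \emph{conditional} law of $X_L$ on $\{T\le L\}$ need not be close to $\pi$, so $\phi(L)$ can have large components on slow eigenspaces. The loss is precisely your Cauchy--Schwarz step $g(L,y)^2\le(L+1)\sum_s q_s(y)^2$, which discards the cross terms $q_{s_1}(y)q_{s_2}(y)$.

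The paper's argument keeps exactly those cross terms. It writes $g_y(L+u)$ as the mixture $\sum_{z,s}\nu(z,s)P^{L+u-s}(z,y)$ (with the two-step averaging), expands the square as a double sum over $(z_1,s_1),(z_2,s_2)$, and uses reversibility in the form $\sum_y \pi(y)^{-1}P^{a}(z_1,y)P^{b}(z_2,y)=\pi(z_2)^{-1}P^{a+b}(z_1,z_2)$ to collapse each term to a single transition probability. Only \emph{after} averaging over $u$ (which turns the $P^{a+b}$ into a time-sum of length $\asymp 2L+2U$) does one invoke the stopping-time input $\nu(z,[0,L])\le\pi(z)$; the remaining sum $\sum_{z_1}\pi(z_1)P^{\cdot}(z_1,z_2)=\pi(z_2)$ then cancels the $\pi(z_2)^{-1}$ and yields the factor $(L+U)/U=1+L/U$. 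In your language, the fix is to skip Cauchy--Schwarz entirely and expand $\|\psi(L+u)\|^2=\sum_{s_1,s_2}\langle Q^2 P^{2L+2u-s_1-s_2}\tilde\mu_{s_1},\tilde\mu_{s_2}\rangle_\pi$ before summing over $u$; the bound $\sum_s\mu_s\le\pi$ must be applied to the \emph{bilinear} form, not to each $\|\mu_s\|_{1/\pi}^2$ separately.
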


\begin{proof}[\textbf{Proof}]
In this proof we will write $P_{x,y}(t)=P^t(x,y)$ for notational convenience.
We define a measure $\nu$ on $\Gamma \times [0,L]$ by
\[
\nu(\cdot,\cdot) = \P_x(T \leq L, (X_T,T) \in (\cdot,\cdot)).
\]
We define $g_y(u)= \frac{1}{2} \P_x(X_{L+u}=y,T \leq L)+\frac{1}{2} \P_x(X_{L+u+1}=y,T \leq L)$ for $0 \leq u \leq U-1$. By conditioning on $(X_T,T)$ we get
\[
g_y(u) =\frac{1}{2} \sum_{(z,s)}(P_{z,y}(L + u - s)+ P_{z,y}(L + u + 1 - s)) \nu(z,s),
\]
where the sum is over $(z,s)$ in $\Gamma \times [0,L]$.
Thus
\begin{align}\label{eq:gy}
4 \sum_y \pi(y)^{-1} g_y(u)^2 = I_1 + I_2 + I_3 + I_4,
\end{align}
where
\begin{align*}
&I_1= \sum_{\substack{(z_1,s_1)\\(z_2,s_2)}}\sum_y  \pi^{-1}(y) P_{z_1,y}(L+u -s_1) P_{z_2,y}(L+u -s_2) \nu(z_1,s_1) \nu(z_2,s_2),
\\
&I_2= \sum_{\substack{(z_1,s_1)\\(z_2,s_2)}}\sum_y  \pi^{-1}(y) P_{z_1,y}(L+u -s_1) P_{z_2,y}(L+u+1 -s_2) \nu(z_1,s_1) \nu(z_2,s_2),
\\
&I_3= \sum_{\substack{(z_1,s_1)\\(z_2,s_2)}}\sum_y  \pi^{-1}(y) P_{z_1,y}(L+u+1 -s_1) P_{z_2,y}(L+u -s_2) \nu(z_1,s_1) \nu(z_2,s_2) \text{ and}
\\
&I_4= \sum_{\substack{(z_1,s_1)\\(z_2,s_2)}}\sum_y  \pi^{-1}(y) P_{z_1,y}(L+u+1 -s_1) P_{z_2,y}(L+u+1 -s_2) \nu(z_1,s_1) \nu(z_2,s_2).
\end{align*}
By reversibility we have that
\begin{align*}
&I_1=\sum_{\substack{(z_1,s_1)\\(z_2,s_2)}} \pi(z_2)^{-1}P_{z_1,z_2}(2L+2u-s_1-s_2)\nu(z_1,s_1) \nu(z_2,s_2),
\\
&I_2= \sum_{\substack{(z_1,s_1)\\(z_2,s_2)}}\pi(z_2)^{-1}P_{z_1,z_2}(2L+2u +1 -s_1-s_2)\nu(z_1,s_1) \nu(z_2,s_2),
\\
&I_3= \sum_{\substack{(z_1,s_1)\\(z_2,s_2)}}\pi(z_2)^{-1}P_{z_1,z_2}(2L+2u+1-s_1-s_2)\nu(z_1,s_1) \nu(z_2,s_2) \text{ and}
\\
&I_4= \sum_{\substack{(z_1,s_1)\\(z_2,s_2)}}\pi(z_2)^{-1} P_{z_1,z_2}(2L+2u+2-s_1-s_2)\nu(z_1,s_1) \nu(z_2,s_2).
\end{align*}
By considering two cases depending on whether $s_1+s_2$ is odd or even it is elementary to check that
\[
\frac{1}{U}\sum_{u=0}^{U-1} (P_{z_1,z_2}(2L+2u-s_1-s_2) +P_{z_1,z_2}(2L+2u+1-s_1-s_2))     \leq \frac{1}{U}\sum_{u=0}^{2L+2U-1}P_{z_1,z_2}(u),
\]
since $s_1,s_2 \in [0,L]$. Similarly
\[
\frac{1}{U}\sum_{u=0}^{U-1} (P_{z_1,z_2}(2L+2u+2-s_1-s_2) +P_{z_1,z_2}(2L+2u+1-s_1-s_2))     \leq \frac{1}{U}\sum_{u=1}^{2L+2U-1}P_{z_1,z_2}(u).
\]
In this last average we have no dependence on $s_1,s_2$. Hence using \eqref{eq:gy}, the fact that $\nu(z,[0,L])\leq \pi(z)$ for all $z$ and stationarity of $\pi$, we get that
\begin{align*}
\frac{1}{U}\sum_{u=0}^{U-1}\sum_{y} \pi^{-1}(y) g_y(u)^2 & \leq  \frac{1}{4U} \sum_{z_1,z_2} \pi^{-1}(z_2) \left(\sum_{u=0}^{2L+2U-1}P_{z_1,z_2}(u) +\sum_{u=1}^{2L+2U-1}P_{z_1,z_2}(u) \right) \pi(z_1) \pi(z_2) \\
&= 1+L/U.
\end{align*}
This is an upper bound for the average, hence there exists some $u\leq U-1$ such that
\[
\sum_{y}\pi^{-1}(y) g_y(u)^2 \leq 1+ L/U.
\]
\end{proof}

\begin{remark}\rm{
We note that the above lemma uses the same approach as in Aldous \cite[Lemma~38]{Aldousmixing}. Aldous' proof is carried out in continuous time. The proof of Lemma~\ref{lem:average} cannot be done in discrete time for the non lazy version of the chain, since in this case defining $f_y(u)=\P_x(X_u=y, T \leq L)$, we would get that $\displaystyle \sum_y \frac{f_y(u)^2}{\pi(y)} \leq 2 + \frac{L}{U}$. This is where the averaging plays a crucial role.
}
\end{remark}
We now have all the ingredients needed to give the proof of Lemma~\ref{lem:tavertstop}.

\begin{proof}[\textbf{Proof of Lemma~\ref{lem:tavertstop}}]
We fix $x$. Let $T$ be the filling rule as defined at the beginning of this section, which was shown to achieve the minimum appearing in the definition of $\tstop$. Thus, since in the definition of $\tstop$ there is a maximum over the starting points, we have that
\begin{align}\label{eq:Ttstop}
\E_x[T] \leq \tstop.
\end{align}
Let $f_y(u)= \frac{1}{2} \P_x(X_u=y, T \leq L) + \frac{1}{2} \P_x(X_{u+1}=y, T \leq L)$ as appears in Lemma~\ref{lem:average}, where $L$ and $U$ are two positive constants whose precise value will be determined later in the proof and
$u \leq L+U$ is such that
\begin{align}\label{eq:sumfj}
\sum_y \frac{f_y(u)^2}{\pi(y)} \leq 1+\frac{L}{U}.
\end{align}
We then have
\begin{align*}
&\left\|\frac{1}{2}P^u(x,\cdot) + \frac{1}{2} P^{u+1}(x,\cdot) - \pi \right\| = \frac{1}{2} \sum_y \left|\frac{1}{2} P^u(x,y) + \frac{1}{2} P^{u+1}(x,y) - \pi(y)\right|
\\
&\leq \frac{1}{2} \left(\sum_y\left|\frac{1}{2} P^u(x,y) + \frac{1}{2} P^{u+1}(x,y) - f_y(u)\right| + \sum_y |f_y(u) - \pi(y)| \right) \\
&= \frac{1}{2} \left(\P_x(T>L) + \sum_y |f_y(u) - \pi(y)| \right),
\end{align*}
since $f_y(u) \leq \frac{1}{2} P^u(x,y) + \frac{1}{2} P^{u+1}(x,y)$ and $\sum_y f_y(u) = \P_x(T\leq L)$.

By the Cauchy--Schwarz inequality we deduce that
\begin{align*}
&\left(\sum_y|f_y(u) - \pi(y)|\right)^2 = \left(\sum_y \pi(y)^{1/2} \left|\frac{f_y(u)-\pi(y)}{\pi(y)^{1/2}} \right|\right)^2
\leq \sum_y \pi(y)^{-1}(f_y(u)-\pi(y))^2
\\&= \sum_y \pi(y)^{-1} f_y(u)^2 - 2\sum_y f_y(u) + 1 = \sum_y \pi(y)^{-1} f_y(u)^2 - 2\P_x(T\leq L) + 1.
\end{align*}
Using \eqref{eq:sumfj} we get that this last expression is bounded from above by
\[
2\P_x(T>L) + \frac{L}{U}.
\]
Since $\left\|\frac{1}{2}P^{t}(x,\cdot)+\frac{1}{2}P^{t+1}(x,\cdot) - \pi \right\|$ is decreasing in $t$, we conclude that
\[
\left\|\frac{1}{2}P^{L+U}(x,\cdot) + \frac{1}{2} P^{L+U+1}(x,\cdot) - \pi \right\| \leq \frac{1}{2} \left(\P_x(T>L) + \left(2\P_x(T>L) + \frac{L}{U} \right)^{1/2} \right).
\]
If we now take $L=20 \tstop$ and $U=10L$, then by Markov's inequality and \eqref{eq:Ttstop} we get that the total variation distance
\[
\left\|\frac{1}{2}P^{L+U}(x,\cdot) + \frac{1}{2} P^{L+U+1}(x,\cdot) - \pi \right\| \leq \frac{1}{4}.
\]
Thus we get that $\taver \leq L+U = 220 \tstop$ and this concludes the proof of the lemma.
\end{proof}

\section{Proofs of equivalences}\label{sec:proofs}
In Section~\ref{sec:prel} we defined the notion of $\tstop$. In order to prove Lemma~\ref{lem:lazystop} we will first show a preliminary result that compares $\tstop$ to $\tstop^L$, where the latter is defined as
\[
\tstop^L =\max_{x} \min\{\E_x[U_x]: U_x \text{ is a stopping time s.t. } \P_x(X^{L}_{U_x} \in \cdot) = \pi(\cdot) \},
\]
where $X^L$ stands for the lazy version of the chain $X$.

\begin{lemma}\label{lem:equiv}
For every chain we have that
\[
\tstop \leq \frac{1}{2}\tstop^{L}.
\]
\end{lemma}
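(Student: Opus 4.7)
The plan is to construct, for each starting state $x$, a stationary stopping time $T$ for the original chain $X$ with $\E_x[T] \leq \tstop^L/2$; taking the maximum over $x$ then gives the lemma. Fix $x$ and let $U$ be a stationary stopping time for the lazy chain $X^L$ starting from $x$ that achieves the minimum in the definition of $\tstop^L$ (existence is guaranteed by the filling rule applied to the lazy chain), so $\P_x(X^L_U \in \cdot) = \pi(\cdot)$ and $\E_x[U] \leq \tstop^L$.

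Next, I would realize the lazy chain as a slowed-down copy of $X$. Let $(\xi_i)_{i \geq 1}$ be i.i.d.\ Bernoulli($1/2$) variables independent of $X$, set $N_0 = 0$ and $N_k = \sum_{i=1}^k \xi_i$, and define $X^L_k := X_{N_k}$. Each lazy step either advances $X$ (when $\xi_i = 1$) or holds (when $\xi_i = 0$), so $X^L$ is indeed a lazy version of $X$ with the correct transition matrix $(P+I)/2$. I then set $T := N_U$, which immediately gives $X_T = X^L_U$, so $\P_x(X_T \in \cdot) = \pi(\cdot)$.

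To justify that $T$ is a (possibly randomized) stopping time for $X$, observe that on $\{U = j\}$ the value $T = N_j$ depends only on $\xi_1, \ldots, \xi_j$ and on $X^L_0, \ldots, X^L_j = X_{N_0}, \ldots, X_{N_j}$, hence only on $X_0, \ldots, X_{N_j}$. Consequently $\{T \leq k\}$ is measurable with respect to $\sigma(X_0, \ldots, X_k, \xi_1, \xi_2, \ldots)$, and the $\xi_i$ are independent of $X$, which matches the paper's notion of stopping time (as in the filling rule construction, which already uses auxiliary independent randomness).

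Finally, I would apply Wald's identity on the filtration $\mathcal{G}_k := \sigma(X, \xi_1, \ldots, \xi_k)$: each $\xi_{k+1}$ is independent of $\mathcal{G}_k$ since the $\xi_i$'s are independent of $X$, and $U$ is a $\mathcal{G}_k$-stopping time because $\{U \leq k\}$ is a function of $X^L_0, \ldots, X^L_k$, hence of $X$ together with $\xi_1, \ldots, \xi_k$. Therefore
\[
\E_x[T] = \E_x\!\left[\sum_{i=1}^U \xi_i\right] = \E_x[U]\,\E[\xi_1] = \frac{\E_x[U]}{2} \leq \frac{\tstop^L}{2},
\]
and maximizing over $x$ gives $\tstop \leq \tstop^L/2$. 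The only real obstacle is the measurability bookkeeping needed to legitimately apply Wald's identity; this is routine once the independent coin-flip coupling between $X$ and $X^L$ is in place, and the argument uses no reversibility, as required by the statement.
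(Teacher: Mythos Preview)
Your proof is correct and follows essentially the same route as the paper: realize $X^L$ by viewing $X$ at binomial times $N_k=\sum_{i\le k}\xi_i$ with i.i.d.\ fair coins independent of $X$, take a stationary stopping time $U$ for $X^L$, set $T=N_U$, verify that $T$ is a stopping time for $X$ with respect to the enlarged filtration $\sigma(X_0,\ldots,X_k,(\xi_j)_{j\ge 0})$, and compute $\E_x[T]=\tfrac12\E_x[U]$ via Wald's identity. The only cosmetic difference is that the paper phrases the conclusion as an inclusion of sets of expectations rather than working with a minimizer, and your filtration $\mathcal{G}_k$ for Wald should also carry any auxiliary randomness used by $U$ (e.g.\ the filling rule's uniforms), but this is a trivial enlargement.
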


\begin{proof}[\textbf{Proof}]
Let $X^L$ denote the lazy version of the chain $X$. Then $X^L$ can be realized by viewing $X$ at a $\mathrm{Bin}(t,1/2)$ time, namely let $f(t) \sim \mathrm{Bin}(t,1/2)$, then $X^L_t=X_{f(t)}$ a.s. We can express $f(t)$ as $\displaystyle f(t)=\sum_{j=0}^{t}\xi(j)$, where $(\xi(j))_{j \geq 0}$ are i.i.d.\ fair coin tosses.
Let $T$ be a stopping time for the lazy chain $X^L$.
We enlarge the filtration by adding all the coin tosses. In particular for each $k$ we consider the following filtration:
\[
\mathcal{F}_k = \sigma(X_0,\ldots,X_k, (\xi_j)_{j\geq 0}).
\]
It is obvious that $X$ has the Markov property with respect to the filtration $\mathcal{F}$ too. Also $f(T)$ is a stopping time for that filtration. Indeed,
\begin{align*}
\{f(T)=t\} = \left\{\sum_{j=0}^{T}\xi_j=t \right\} = \bigcup_{\ell \geq t}\left\{T=\ell, \sum_{j=0}^{\ell}\xi_j=t \right\}
\end{align*}
and for each $\ell \geq t$ we have that
\[
\left\{T=\ell, \sum_{j=0}^{\ell}\xi_j=t \right\} \in \sigma(X_0,\ldots,X_t, (\xi_j)_{j \geq 0}),
\]
since on the event $f(\ell)=t$ we have that $X^L_\ell=X_{f(\ell)}=X_t$.
Hence $f(T)$ is a stopping time for $X$ and it achieves stationarity, since for all $x$ and $y$
\[
\P_x\left(X_{f(T)} = y\right) = \P_x(X^L_T = y) = \pi(y),
\]
since $T$ achieves stationarity for the lazy chain.
By Wald's identity for stopping times we get that for all $x$
\[
\E_x[f(T)] = \E_x\left[\sum_{j=1}^{T}\xi(j)\right] = \E_x[T] \E_x[\xi] = \frac{1}{2} \E_x[T].
\]
Hence using a stopping time of the lazy chain $X^L$ achieving stationarity we defined a stopping time for the base chain $X$ achieving stationarity and with expectation equal to half of the original one. Thus for all $x$ we obtain that
\begin{align*}
&\{\E_x[T]: T \text{ stopping time s.t. } \P_x\left(X^L_T=\cdot \right)=\pi\} \\
&\subset \{2 E_x[T']: T' \text{ stopping time s.t. } \P_x\left(X_{T'}=\cdot\right)=\pi \}.
\end{align*}
Therefore taking the minimum concludes the proof.
\end{proof}

Before giving the proof of Lemma~\ref{lem:lazystop} we introduce some notation and a preliminary result that will also be used in the proof of Lemma~\ref{lem:tstopgen}. For any $t$ we let
\begin{align*}
s(t)= \max_{x,y}\left[1-\frac{P^t(x,y)}{\pi(y)} \right] \ \ \text{and} \ \ \bar{d}(t)=\max_{x,y} \| P^t(x,\cdot)-P^t(y,\cdot)\|.
\end{align*}
We will call $s$ the total separation distance from stationarity.

We finally define the separation mixing as follows
\begin{align*}
\tsep = \min\{t \geq 0: s(t) \leq 3/4 \}.
\end{align*}

\begin{lemma}\label{lem:distance}
For a reversible Markov chain we have that
\begin{align*}
d(t) \leq \bar{d}(t) \leq 2d(t) \ \ \text{and} \ \  s(2t)\leq 1-(1-\bar{d}(t))^2.
\end{align*}
\end{lemma}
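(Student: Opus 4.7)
The three inequalities split into two easy ones and one substantive one. First I would dispose of the bounds comparing $d$ and $\bar d$ by a direct application of the triangle inequality, using the fact that $\pi$ is a mixture of the rows of $P^t$. For the upper bound, observe that $\pi = \sum_y \pi(y) P^t(y,\cdot)$, so $\|P^t(x,\cdot)-\pi\| \le \sum_y \pi(y)\|P^t(x,\cdot)-P^t(y,\cdot)\| \le \bar d(t)$, giving $d(t)\le \bar d(t)$. For the other side, insert $\pm\pi$ to obtain $\|P^t(x,\cdot)-P^t(y,\cdot)\| \le \|P^t(x,\cdot)-\pi\|+\|\pi-P^t(y,\cdot)\| \le 2d(t)$. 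Neither step uses reversibility.

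The substantive step is the separation bound, and this is where reversibility enters. The plan is to rewrite $P^{2t}(x,y)/\pi(y)$ as an inner product. Using detailed balance, $P^t(z,y)/\pi(y) = P^t(y,z)/\pi(z)$, so
\[
\frac{P^{2t}(x,y)}{\pi(y)} \;=\; \sum_z \frac{P^t(x,z)\,P^t(z,y)}{\pi(y)} \;=\; \sum_z \frac{P^t(x,z)\,P^t(y,z)}{\pi(z)}.
\]
Write $\mu(\cdot)=P^t(x,\cdot)$ and $\nu(\cdot)=P^t(y,\cdot)$. Since $\mu(z)\nu(z)\ge (\mu(z)\wedge\nu(z))^2$ pointwise, it suffices to lower bound $\sum_z (\mu(z)\wedge\nu(z))^2/\pi(z)$.

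For that, I would apply the Cauchy--Schwarz inequality in $\ell^2(\pi)$:
\[
\Bigl(\sum_z \mu(z)\wedge\nu(z)\Bigr)^{2} \;=\; \Bigl(\sum_z \pi(z)^{1/2}\cdot\frac{\mu(z)\wedge\nu(z)}{\pi(z)^{1/2}}\Bigr)^{2} \;\le\; \sum_z\pi(z)\cdot\sum_z\frac{(\mu(z)\wedge\nu(z))^{2}}{\pi(z)} \;=\; \sum_z\frac{(\mu(z)\wedge\nu(z))^{2}}{\pi(z)}.
\]
Combined with the standard identity $\sum_z \mu(z)\wedge\nu(z) = 1-\|\mu-\nu\|$, this gives $P^{2t}(x,y)/\pi(y) \ge (1-\|\mu-\nu\|)^2 \ge (1-\bar d(t))^2$, which is precisely $s(2t)\le 1-(1-\bar d(t))^2$ after taking the maximum over $x,y$.

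The only real obstacle is spotting the right way to convert the diagonal-sum $\sum_z \mu(z)\nu(z)/\pi(z)$ into something controlled by total variation; everything else is triangle inequality and detailed balance. The Cauchy--Schwarz step above is the slickest way I know to do this, and it is the only place where the weight $\pi(z)^{-1}$ is used nontrivially.
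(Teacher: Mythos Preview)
Your proof is correct. The paper does not actually prove this lemma but merely cites Aldous--Fill and Levin--Peres--Wilmer; your argument is precisely the standard one found in those references (in particular \cite[Lemma~19.3]{LevPerWil}), using detailed balance to symmetrize $P^{2t}(x,y)/\pi(y)$ and then Cauchy--Schwarz on the pointwise minimum.
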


\begin{proof}[\textbf{Proof}]
A proof of this result can be found in \cite[Chapter~4, Lemma~7]{AldFill} or \cite[Lemma~4.11 and Lemma~19.3]{LevPerWil}.
\end{proof}

\begin{remark}\label{rem:consequence}\rm{
Lemma~\ref{lem:distance} above gives that $\tsep \leq 2 \tmix$.
}
\end{remark}

\begin{lemma}\label{lem:tstoptsep}
There exists a positive constant $c$ so that for all chains we have
\begin{align*}
\tstop \leq c \tsep
\end{align*}
\end{lemma}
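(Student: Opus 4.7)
The plan is to exploit the uniform separation bound at time $\tsep$ to build a stationary stopping time via a geometric thinning argument. By definition of $\tsep$, we have $s(\tsep)\leq 3/4$, so $P^{\tsep}(x,y)\geq (1-s(\tsep))\pi(y)\geq \tfrac{1}{4}\pi(y)$ for every $x,y$. This uniform lower bound (which holds regardless of the starting state) is the structural fact I will iterate, and is the only property of $\tsep$ that enters the argument; in particular reversibility is not needed.

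First I would construct a stopping time $T$ in rounds of length $\tsep$, using external uniform random variables (in the spirit of the filling rule of Section~\ref{sec:stopproof}, so that $T$ is a stopping time with respect to the enlarged filtration). In round $k$, upon observing $X_{k\tsep}=y$ and the value $X_{(k-1)\tsep}=w$ of the previous checkpoint, one flips an independent coin that declares ``stop'' with probability
\[
\frac{(1-s(\tsep))\,\pi(y)}{P^{\tsep}(w,y)},
\]
which lies in $[0,1]$ by the separation bound.

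Next I would verify two properties of the construction. A summation over $y$ shows that the conditional probability of stopping in round $k$, given we have not stopped before and given the entire past, equals $1-s(\tsep)\ge 1/4$ and, crucially, does not depend on $w$. Moreover, the joint probability of (``stop in round $k$'', $X_{k\tsep}=y$) is proportional to $\pi(y)$, so conditional on stopping in round $k$ the location $X_T=X_{k\tsep}$ is distributed as $\pi$. Combining these facts, the number of rounds is stochastically dominated by a Geometric variable with success probability $1/4$, giving $\E_x[T]\le 4\tsep$ and hence $\tstop\le 4\tsep$.

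The only real subtlety, and what I would be careful to write out, is the inductive ``restart'' step: conditioning on the event of not stopping in the first $k-1$ rounds changes the distribution of $X_{(k-1)\tsep}$, but the Markov property combined with the fact that the bound $P^{\tsep}(w,\cdot)\ge (1-s(\tsep))\pi(\cdot)$ holds \emph{uniformly} in $w$ ensures the stopping probability in each new round remains exactly $1-s(\tsep)$ and the conditional law at the stopping round remains $\pi$. This is the main place where one must be pedantic about the filtration, but given the framework already set up in Section~\ref{sec:stopproof} it is routine.
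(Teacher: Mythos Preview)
Your proposal is correct and takes essentially the same approach as the paper: both exploit the uniform minorization $P^{\tsep}(x,y)\ge \tfrac14\pi(y)$ to build, via independent coin flips at the checkpoints $\tsep,2\tsep,\ldots$, a stationary stopping time whose round count is geometric with success probability at least $1/4$, yielding $\tstop\le 4\tsep$. The only cosmetic difference is that the paper writes the decomposition $P^{\tsep}(x,\cdot)=\tfrac14\pi(\cdot)+\tfrac34\nu_x(\cdot)$ and argues by induction on $m$, whereas you phrase the same splitting explicitly as a coin with bias $(1-s(\tsep))\pi(y)/P^{\tsep}(w,y)$; the constructions are identical up to using $1-s(\tsep)$ versus the cruder $1/4$.
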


\begin{proof}[\textbf{Proof}]
Fix $t=\tsep$. Then we have that for all $x,y$
\[
P^t(x,y) \geq (1-3/4)\pi(y)=\frac{1}{4} \pi(y).
\]
Hence, we can write
\[
P^t(x,y) = \frac{1}{4}\pi(y) + \frac{3}{4} \nu_x(y),
\]
where for a fixed $x$ we have that $\nu_x$ is a probability measure.
We can now construct a stopping time $S \in \{t,2t,\ldots \}$ so that for all $x$
\[
\P_x(X_S \in \cdot, S=t) = \frac{1}{4}\pi(\cdot)
\]
and by induction on $m$ such that
\[
\P_x(X_S \in \cdot, S=mt) = \left(\frac{3}{4}\right)^{m-1} \frac{1}{4} \pi(\cdot).
\]
Therefore it is clear that $X_S$ is distributed according to $\pi$ and $\E_x[S] = 4t$.
Hence we get that $\tstop \leq 4\tsep$.
\end{proof}

\begin{proof}[\textbf{Proof of Lemma~\ref{lem:lazystop}}]

Let $\tsep^L$ stand for the separation mixing of the lazy chain. Then Lemma~\ref{lem:tstoptsep} gives that
\[
\tstop^L \leq c \tsep^L.
\]
Finally, Lemma~\ref{lem:equiv} and Remark~\ref{rem:consequence} conclude the proof.
\end{proof}

\begin{proof}[\textbf{Proof of Lemma~\ref{lem:lazave}}]
Fix $t$. Let $T$ be a random variable taking values $t$ and $t+1$ each with probability $1/2$, i.e.
\begin{align*}
T =
  \begin{cases}
    t, & \mbox{w.p.} \frac{1}{2} \\
    t+1, & \mbox{w.p.} \frac{1}{2}.
  \end{cases}
\end{align*}
Thus $T$ can be written as $T = Y_1 + t$, where $Y_1$ is Bernoulli with probability $\frac{1}{2}$.
Then we have that for all $x$ and $y$
\[
\P_x(X_T = y) = \frac{1}{2} \P_x(X_t = y) + \frac{1}{2} \P_x(X_{t+1}=y).
\]
Let $\displaystyle Z \sim \mathrm{Bin}(3t,\tfrac{1}{2})$. Then we can write $Z$ as $Z = Y_1 + Z_1$, where $Z_1$ is distributed according to $\displaystyle \mathrm{Bin}(3t-1,\tfrac{1}{2})$ and is independent of $Y_1$. Therefore $Z$ can be expressed as the sum of two independent random variables, $Z = T + (Z_1 - t)$. (With high probability $Z_1 - t \approx (Z_1 - t)_+$.)
We fix $x$. By the triangle inequality for the total variation distance, we obtain
\begin{align*}
\|\P_x(X_Z = \cdot) - \pi \| \leq \|\P_x(X_{T+(Z_1 - t)_+} = \cdot) - \pi \| + \|\P_x(X_{T + (Z_1- t)} = \cdot) - \P_x(X_{T + (Z_1 - t)_+} = \cdot) \|.
\end{align*}
Since $T$ and $(Z_1-t)_+$ are independent and $(Z_1 - t)_+ \geq 0$, by the monotonicity of the total variation distance Claim~\ref{cl:decr}, we deduce that
\begin{align}\label{eq:mon}
\|\P_x(X_{T+(Z_1 - t)_+} = \cdot) - \pi \| \leq \|\P_x(X_{T} = \cdot) - \pi \|.
\end{align}
It is easy to see that
\begin{align}\label{eq:expon}
\|\P_x(X_{T + (Z_1- t)} = \cdot) - \P_x(X_{T + (Z_1 - t)_+} = \cdot) \| \leq \P_x(Z_1<t)\leq e^{-ct},
\end{align}
for a positive constant $c$, since $Z_1$ follows the Binomial distribution.
Hence by \eqref{eq:mon} and \eqref{eq:expon} we get that
\begin{align}\label{eq:avelazy}
\|\P_x(X_Z = \cdot) - \pi \| \leq \|\P_x(X_{T} = \cdot) - \pi \| + e^{-ct}
\end{align}
The mixing time for the lazy chain was defined in \eqref{eq:tmixlazy}. Equivalently it is given by
\[
\tl(\epsilon) = \min\left\{t: \max_i \|\P_i(X_{Z'}=\cdot) - \pi\| <\epsilon \right\},
\]
where $Z'$ is distributed according to $\mathrm{Bin}(t,1/2)$. Thus
\[
\tl(\epsilon) \leq 3\min\left\{t: \max_i \|\P_i(X_{Z}=\cdot) - \pi\| <\epsilon \right\}.
\]
Finally, from \eqref{eq:avelazy} we get that there exists a constant $c_2>0$ such that
\[
\tl\left(\tfrac{4}{3}\epsilon\right)\leq c_2 \taver(\epsilon).
\]
But $\displaystyle \tl\left(\tfrac{4}{3}\epsilon\right) \geq c_3 \tl(\epsilon)$, since $\displaystyle \epsilon \leq \tfrac{1}{4}$ and this concludes the proof.
\end{proof}

\section{Mixing at a geometric time}\label{sec:geomix}
Before giving the proof of Lemma~\ref{lem:tgtstop} we state two easy facts about total variation distance.

\begin{claim}\label{cl:total}
Let $Y$ be a discrete random variable with values in $\N$ and satisfying
\[
\P(Y=j) \leq c, \text{ for all } j>0 \ \ \text{and} \ \ \P(Y=j) \text{ is decreasing in } j,
\]
where $c$ is a positive constant.
Let $Z$ be an independent random variable with values in $\N$. Then
\begin{align}\label{eq:claim}
\|\P(Y+Z = \cdot) - \P(Y=\cdot) \| \leq c \E[Z].
\end{align}
\end{claim}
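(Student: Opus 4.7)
The plan is to reduce the problem to a deterministic shift of $Y$ by conditioning on $Z$, and then to bound that shift via a telescoping sum that uses the monotonicity of the law of $Y$.

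First I would apply the convexity of total variation distance. Since conditionally on $Z=k$ the random variable $Y+Z$ is distributed as $Y+k$, the triangle inequality for mixtures yields
\[
\|\P(Y+Z=\cdot)-\P(Y=\cdot)\| \leq \sum_{k\geq 0}\P(Z=k)\,\|\P(Y+k=\cdot)-\P(Y=\cdot)\|.
\]
Thus it suffices to prove the deterministic-shift estimate $\|\P(Y+k=\cdot)-\P(Y=\cdot)\|\leq ck$ for every integer $k\geq 0$, since taking expectation in $k$ then recovers~\eqref{eq:claim}.

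Next I would establish the shift estimate directly. Writing $p_j=\P(Y=j)$ and noting that $\P(Y+k=j)=p_{j-k}$ for $j\geq k$ while $\P(Y+k=j)=0$ for $j<k$, one has
\[
2\|\P(Y+k=\cdot)-\P(Y=\cdot)\| = \sum_{j=0}^{k-1} p_j \;+\; \sum_{j\geq k}\bigl|p_{j-k}-p_j\bigr|.
\]
The key observation is that monotonicity of $(p_j)$ forces $p_{j-k}\geq p_j$ for every $j\geq k$, so the absolute values disappear and the second sum telescopes to $\sum_{i=0}^{k-1} p_i$. Combining the two sums gives the clean identity
\[
\|\P(Y+k=\cdot)-\P(Y=\cdot)\| = \sum_{j=0}^{k-1} p_j,
\]
and the pointwise bound $p_j\leq c$ (for $j>0$; in the intended geometric application $Y$ is supported on $\{1,2,\ldots\}$ so $p_0=0$) then yields the required upper bound $ck$.

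The only substantive step is the telescoping identity, which depends crucially on the monotonicity hypothesis; everything else is routine bookkeeping with nonnegative sums, so no real obstacle is expected.
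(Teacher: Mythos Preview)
Your proof is correct and follows essentially the same route as the paper: both reduce to the deterministic shift $\|\P(Y+k=\cdot)-\P(Y=\cdot)\|\le ck$ and then average over $Z$, and your telescoping computation is just a more explicit version of the paper's one-line observation that the one-sided sum $\sum_{j:\,p_j\ge p_{j-k}}(p_j-p_{j-k})$ equals $\sum_{j<k}p_j\le ck$. Your remark about $p_0$ is a fair caveat; the paper's convention and applications have $Y$ supported on $\{1,2,\dots\}$, so this is not an issue.
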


\begin{proof}[\textbf{Proof}]
Using the definition of total variation distance and the assumption on $Y$ we have for all $k \in \N$
\begin{align*}
\|\P(Y+k = \cdot) - \P(Y=\cdot)  \| = \sum_{j:  \P(Y = j) \geq \P(Y+k=j)}( \P(Y=j) - \P(Y+k = j)) \leq k c.
\end{align*}
Finally, since $Z$ is independent of $Y$, we obtain~\eqref{eq:claim}.
\end{proof}

The coupling definition of total variation distance gives the following:
\begin{claim}\label{cl:function}
Let $X$ be a Markov chain and $W$ and $V$ be two random variables with values in $\N$. Then
\begin{align*}
\|\P(X_W = \cdot) - \P(X_V=\cdot) \| \leq \|\P(W = \cdot) - \P(V = \cdot) \|.
\end{align*}
\end{claim}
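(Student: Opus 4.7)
The plan is to invoke the coupling characterization of total variation distance twice: once to produce a good coupling of $W$ and $V$, and once (in reverse) to bound the distance between the distributions of $X_W$ and $X_V$ by the mismatch probability in that coupling.

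First I would recall that the total variation distance between two $\N$-valued random variables equals the minimum, over all couplings, of the probability that they disagree; that is, there exists a coupling $(W', V')$ with $W' \stackrel{d}{=} W$ and $V' \stackrel{d}{=} V$ such that
\[
\P(W' \neq V') = \|\P(W=\cdot) - \P(V=\cdot)\|.
\]
I would take this coupling to be independent of the Markov chain $X$ (which is implicitly assumed independent of $W$ and $V$ in the statement).

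Next I would use $(W', V')$ to define a coupling of $X_W$ and $X_V$ simply by evaluating the same sample path of $X$ at the two (coupled) random times: set $A = X_{W'}$ and $B = X_{V'}$. By construction and the independence of $(W',V')$ from $X$, we have $A \stackrel{d}{=} X_W$ and $B \stackrel{d}{=} X_V$. Moreover, on the event $\{W' = V'\}$ we have $X_{W'} = X_{V'}$, so $\{A \neq B\} \subseteq \{W' \neq V'\}$.

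Applying the coupling inequality in the other direction,
\[
\|\P(X_W = \cdot) - \P(X_V = \cdot)\| \leq \P(A \neq B) \leq \P(W' \neq V') = \|\P(W=\cdot) - \P(V=\cdot)\|,
\]
which is the desired bound. There is no real obstacle here; the only point requiring a moment's care is making sure the coupling of $(W,V)$ is taken jointly independent of the chain, so that the marginals of $(X_{W'}, X_{V'})$ are genuinely those of $X_W$ and $X_V$.
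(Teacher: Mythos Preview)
Your argument is correct and is precisely the ``coupling definition of total variation distance'' that the paper invokes in lieu of a written proof. The paper does not spell out the details, but the optimal coupling of $(W,V)$ taken independently of $X$ is exactly what is meant.
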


\begin{proof}[\textbf{Proof of Lemma~\ref{lem:tgtstop}}]
We fix $x$. Let $\tau$ be a stationary time, i.e.\ $\P_x(X_{\tau}=\nolinebreak \cdot)\nolinebreak =\nolinebreak \pi$. Then $\tau+s$ is also a stationary time for all $s\geq 1$. Hence, if $Z_t$ is a Geometric random variable independent of $\tau$, then $Z_t + \tau$ is also a stationary time, i.e.\ $\P_x(X_{Z_t + \tau} = \cdot) = \pi$. Since $Z_t$ and $\tau$ are independent, and $Z_t$ satisfies the assumptions of Claim~\ref{cl:total}, we get
\begin{align}\label{eq:vargeo}
\|\P_x(Z_t + \tau = \cdot) - \P_x(Z_t = \cdot) \| \leq \frac{\E_x[\tau]}{t}.
\end{align}
From Claim~\ref{cl:function}, we obtain
\begin{align*}
\|\P_x(X_{Z_t + \tau} = \cdot) - \P_x(X_{Z_t} = \cdot) \| \leq \|\P_x(Z_t + \tau = \cdot) - \P_x(Z_t = \cdot) \| \leq \frac{\E_x[\tau]}{t},
\end{align*}
and since $\P_x(X_{Z_t + \tau} = \cdot) = \pi$, taking $t \geq 4 \E_x[\tau]$ concludes the proof.
\end{proof}


Recall from Section~\ref{sec:prel} the definition of $N_t$ as a random variable independent of the Markov chain and of mean $t$. We also defined
\[
d_N(t) = \max_x \|\P_x(X_{N_t} = \cdot) - \pi\|.
\]
Let $N_t^{(1)},N_t^{(2)}$ be i.i.d.\ random variables distributed as $N_t$ and set $V_t= N_t^{(1)}+N_t^{(2)}$. We now define
\[
s_N(t) = \max_{x,y}\left[1 - \frac{\P_x(X_{V_t}=y)}{\pi(y)} \right] \ \ \text{and} \ \ \bar{d}_N(t) = \max_{x,y} \|\P_x(X_{N_t} = \cdot) - \P_y(X_{N_t} = \cdot)\|.
\]
When $N$ is a geometric random variable we will write $d_G(t)$ and $\bar{d}_G(t)$ respectively.

\begin{lemma}\label{lem:septotal}
For all $t$ we have that
\[
d_N(t) \leq \bar{d}_N(t) \leq 2 d_N(t) \ \ \text{and} \ \ s_N(t) \leq 1 - (1 - \bar{d}_N(t))^2.
\]
\end{lemma}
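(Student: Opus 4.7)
The plan is to mimic the proof of Lemma~\ref{lem:distance}, with $P^t$ replaced by the randomised-time kernel $K(x,\cdot)=\P_x(X_{N_t}=\cdot)$. For the first chain of inequalities, note that since $N_t$ is independent of the chain and $\pi$ is $P$-invariant, $\pi$ is also $K$-invariant: conditioning on $N_t$ gives $\sum_x\pi(x)\P_x(X_{N_t}=\cdot)=\pi(\cdot)$. Thus $\pi=\sum_y\pi(y)\,\P_y(X_{N_t}=\cdot)$, which together with convexity of total variation gives $d_N(t)\le \bar d_N(t)$. The bound $\bar d_N(t)\le 2d_N(t)$ is then an immediate triangle inequality through $\pi$.

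For the separation bound, the key step is to produce the randomised analogue of the reversibility identity used in the deterministic case. Conditioning on $(N_t^{(1)},N_t^{(2)})=(s_1,s_2)$, expanding $P^{s_1+s_2}(x,y)=\sum_z P^{s_1}(x,z)P^{s_2}(z,y)$, applying detailed balance $\pi(z)P^{s_2}(z,y)=\pi(y)P^{s_2}(y,z)$, and then summing separately over $s_1$ and $s_2$ using their independence, one obtains
\[
\frac{\P_x(X_{V_t}=y)}{\pi(y)}\ =\ \sum_z\frac{\P_x(X_{N_t^{(1)}}=z)\,\P_y(X_{N_t^{(2)}}=z)}{\pi(z)}.
\]
Set $\mu(z)=\min\bigl(\P_x(X_{N_t^{(1)}}=z),\,\P_y(X_{N_t^{(2)}}=z)\bigr)$, so the right-hand side is bounded below by $\sum_z\mu(z)^2/\pi(z)$. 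A Cauchy--Schwarz application against $\sqrt{\pi(z)}$ gives $\sum_z\mu(z)^2/\pi(z)\ge\bigl(\sum_z\mu(z)\bigr)^2$, and since $N_t^{(1)}$ and $N_t^{(2)}$ have the same law as $N_t$, we have $\sum_z\mu(z)=1-\|\P_x(X_{N_t^{(1)}}=\cdot)-\P_y(X_{N_t^{(2)}}=\cdot)\|\ge 1-\bar d_N(t)$. Combining the two estimates yields $s_N(t)\le 1-(1-\bar d_N(t))^2$.

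The only point requiring any care is the factorisation identity displayed above: one needs the mutual independence of $N_t^{(1)}$, $N_t^{(2)}$ and the chain in order to separate the double sum over $(s_1,s_2)$ cleanly after detailed balance has been applied, and this is the only place where reversibility enters. The remaining Cauchy--Schwarz argument is standard and unchanged from the proof of Lemma~\ref{lem:distance}.
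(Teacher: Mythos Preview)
Your proof is correct. However, the paper's argument is considerably more economical and conceptually cleaner: rather than reproving the separation bound from scratch, the paper simply observes that $Q(x,y):=\P_x(X_{N_t}=y)$ is itself the transition kernel of a (reversible, $\pi$-stationary) Markov chain $Y$, and that by independence of $N_t^{(1)}$ and $N_t^{(2)}$ one has $Q^2(x,y)=\P_x(X_{V_t}=y)$. With this identification, $d_N(t)$, $\bar d_N(t)$ and $s_N(t)$ are exactly $d_Y(1)$, $\bar d_Y(1)$ and $s_Y(2)$ for the chain $Y$, and the whole lemma is then a direct citation of Lemma~\ref{lem:distance} applied to $Y$. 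Your computation is precisely what that citation unpacks to --- the factorisation identity you derive is just $Q^2(x,y)/\pi(y)=\sum_z Q(x,z)Q(y,z)/\pi(z)$, which is the reversibility of $Q$, and the Cauchy--Schwarz/minimum step is the standard proof of the deterministic separation bound. So the mathematical content is identical; the paper's packaging avoids repeating that standard step and makes clear that nothing new is happening beyond recognising $Q$ as a bona fide reversible kernel.
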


\begin{proof}[\textbf{Proof}]
Fix $t$ and consider the chain $Y$ with transition matrix $Q(x,y)=\P_x(X_{N_t}=y)$. Then $Q^2(x,y)=\P_x(X_{V_t}=y)$, where $V_t$ is as defined above. Thus, if we let
\[
s_Y(u)=\max_{x,y}\left[1 - \frac{Q^u(x,y)}{\pi(y)} \right] \ \ \text{and} \ \ \bar{d}_Y(u)= \max_{x,y}\|\P_x(Y_u=\cdot) - \P_y(Y_u=\cdot)\|,
\]
then we get that $s_N(t)=s_Y(2)$ and $\bar{d}_N(t)=\bar{d}_Y(1)$. Hence, the lemma follows from Lemma~\ref{lem:distance}.
\end{proof}

We now define
\[
t_{s,N} = \min\left\{t\geq 0: s_N(t) \leq \tfrac{3}{4} \right\}.
\]

\begin{lemma}\label{lem:tsep}
There exists a positive constant $c$ so that for every chain
\[
\tstop \leq c t_{s,N}.
\]
\end{lemma}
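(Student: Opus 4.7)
My plan is to mirror the argument used for Lemma~\ref{lem:tstoptsep}, but at the level of the ``one $N_t$-step'' chain instead of the original chain.

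First I set $t = t_{s,N}$, so by definition $\P_x(X_{V_t} = y) \geq \tfrac14 \pi(y)$ for every $x,y$. Define the Markov chain $Y$ with transition matrix $Q(x,y) = \P_x(X_{N_t} = y)$; by independence of $N_t^{(1)}$ and $N_t^{(2)}$, we have $Q^2(x,y) = \P_x(X_{V_t} = y)$, and $\pi$ remains stationary for $Q$. So for every $x,y$,
\[
Q^2(x,y) = \tfrac14 \pi(y) + \tfrac34 \nu_x(y),
\]
where $\nu_x$ is a probability measure. Imitating the construction in the proof of Lemma~\ref{lem:tstoptsep}, I would build a stopping time $S_Y \in \{2,4,6,\ldots\}$ for $Y$ by flipping an independent coin at each even step: at step $2m$, with probability $\tfrac14$ declare $S_Y = 2m$ and resample $Y_{2m} \sim \pi$; otherwise resample $Y_{2m} \sim \nu_{Y_{2m-2}}$ and continue. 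This gives $\P_x(Y_{S_Y} \in \cdot,\, S_Y = 2m) = (3/4)^{m-1}(1/4)\pi(\cdot)$, hence $Y_{S_Y} \sim \pi$ and $\E_x[S_Y] = 2 \sum_{m \geq 1} m (3/4)^{m-1}(1/4) = 8$.

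To convert $S_Y$ into a stopping time for $X$, realize $Y$ on the natural coupling: let $N_t^{(1)}, N_t^{(2)}, \ldots$ be i.i.d.\ copies of $N_t$, independent of $X$, and set $T_k = \sum_{i=1}^k N_t^{(i)}$ and $Y_k = X_{T_k}$. Then $T := T_{S_Y}$ is a stopping time on the filtration $\sigma(X_0,\ldots,X_s, N_t^{(i)}, U_i : i \geq 1)$ (where $U_i$'s are the auxiliary coins), and $X_T = Y_{S_Y} \sim \pi$. Since such randomized stopping times are allowed in the definition of $\tstop$ (as in the filling-rule construction of Section~\ref{sec:stopproof}), $T$ is a legitimate competitor.

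Finally, I would apply Wald's identity to evaluate $\E_x[T]$. The point to verify is that for each $i$, the event $\{S_Y \geq i\}$ depends only on $X$, the auxiliary coins $U_0,\ldots,U_{i-1}$, and $N_t^{(1)},\ldots,N_t^{(i-1)}$ (because $S_Y$ is determined by $Y_0,U_0,\ldots,Y_{i-1},U_{i-1}$ as soon as $S_Y \leq i-1$ is decided), hence is independent of $N_t^{(i)}$. Therefore
\[
\E_x[T] \;=\; \sum_{i \geq 1} \E\!\left[N_t^{(i)} \mathbf{1}(S_Y \geq i)\right] \;=\; t \sum_{i \geq 1} \P(S_Y \geq i) \;=\; t\, \E_x[S_Y] \;=\; 8t,
\]
which yields $\tstop \leq 8\, t_{s,N}$. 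The only mildly subtle point is the Wald step just described; everything else is a direct transcription of Lemma~\ref{lem:tstoptsep} to the chain $Y$.
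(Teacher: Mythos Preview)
Your proposal is correct and follows essentially the same route as the paper. The only cosmetic difference is that the paper defines the auxiliary chain $Y$ directly with transition kernel $Q(x,y)=\P_x(X_{V_t}=y)$ (so each $Y$-step already corresponds to one $V_t$-jump, giving $\E_x[S]=4$ and $\E[V_t]=2t$), whereas you take $Q(x,y)=\P_x(X_{N_t}=y)$ and work with $Q^2$, obtaining $\E_x[S_Y]=8$ and $\E[N_t]=t$; both yield $\tstop\le 8\,t_{s,N}$.
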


\begin{proof}[\textbf{Proof}]

Fix $t=t_{s,N}$. Consider the chain $Y$ with transition kernel $Q(x,y)=\P_x(X_{V_t}=y)$, where $V_t$ is as defined above.
\newline
By the definition of $s_N(t)$ we have that for all $x$ and $y$
\[
Q(x,y) \geq (1-s_N(t)) \pi(y) \geq \frac{1}{4} \pi(y).
\]
Thus, in the same way as in the proof of Lemma~\ref{lem:tstoptsep}, we can construct a stopping time
$S$ such that $Y_S$ is distributed according to $\pi$ and $\E_x[S]=4$ for all $x$.

Let $V_t^{(1)},V_t^{(2)},\ldots$ be i.i.d.\ random variables distributed as $V_t$. Then we can write $Y_u = X_{V_t^{(1)}+\ldots +V_t^{(u)}}$. If we let $T=V_t^{(1)}+\ldots+V_t^{(S)}$, then $T$ is a stopping time for $X$ such that $\mathcal{L}(X_T)=\pi$ and by Wald's identity for stopping times we get that for all $x$
\[
\E_x[T]=\E_x[S] \E[V_t] =8t.
\]
Therefore we proved that
\[
\tstop \leq 8t_{s,N}.
\]
\end{proof}


\begin{proof}[\textbf{Proof of Lemma~\ref{lem:tstopgen}}]
From Lemma~\ref{lem:septotal} we get that
\[
t_{s,N} \leq 2 t_N.
\]
Finally Lemma~\ref{lem:tsep} completes the proof.
\end{proof}

\begin{remark}\rm{
Let $N_t$ be a uniform random variable in $\{1,\ldots,t\}$ independent of the Markov chain.
The mixing time associated to $N_t$ is called Ces{\`a}ro mixing and it has been analyzed by Lov{\'a}sz and Winkler in \cite{LWmixing}. From \cite[Theorem~6.15]{LevPerWil} and the lemmas above we get the equivalence between the Ces{\`a}ro mixing and the mixing of the lazy chain in the reversible case. In Section~\ref{sec:cesaro} we show that the Ces{\`a}ro mixing time is equivalent to $\tg$ for all chains.
}
\end{remark}

\begin{remark}\rm{
From the remark above we see that the mixing at a geometric time and the Ces{\`a}ro mixing are equivalent for a reversible chain. The mixing
at a geometric time though has the advantage that its total variation distance, namely $d_G(t)$, has the monotonicity property Lemma~\ref{lem:dgmon}, which is not true for the corresponding total variation distance for the Ces{\`a}ro mixing.
}
\end{remark}

Recall that $\displaystyle \bar{d}(t)=\max_{x,y}\|\P_x(X_t=\cdot) - \P_y(X_t=\cdot)\|$ is submultiplicative as a function of $t$ (see for instance \cite[Lemma~4.12]{LevPerWil}).
In the following lemma and corollary, which will be used in the proof of Theorem~\ref{thm:mix-hit}, we show that $\bar{d}_G$ satisfies some sort of submultiplicativity.
\begin{lemma}\label{lem:submult}
Let $\beta<1$ and let $t$ be such that $\bar{d}_G(t) \leq \beta$.
Then for all $k \in \N$ we have that
\[
\bar{d}_G(2^kt) \leq \left(\frac{1+\beta}{2}\right)^k \bar{d}_G(t).
\]
\end{lemma}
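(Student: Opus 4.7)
The plan is to establish a one-step doubling inequality $\bar d_G(2t)\le \bar d_G(t)/(2-\bar d_G(t))$ and then iterate it $k$ times. The key input is the distributional identity
\[
Z_{2t} \stackrel{d}{=} Z_t + B\cdot \tilde Z_{2t},
\]
where $B$ is a fair coin and $Z_t, \tilde Z_{2t}, B$ are mutually independent and $\tilde Z_{2t}$ has the same law as $Z_{2t}$. This is the natural thinning picture: run Bernoulli$(1/t)$ trials and accept each success with probability $1/2$; the first raw success happens at time $Z_t$, and with probability $1/2$ we accept (so we stop), otherwise we start over and wait an independent copy $\tilde Z_{2t}$. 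I would verify this by a short generating-function computation, checking that $M_{2t}(s)=M_t(s)\cdot\bigl(\tfrac12+\tfrac12 M_{2t}(s)\bigr)$ holds, equivalently $M_{2t}=M_t/(2-M_t)$.

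From this identity, $\P_x(X_{Z_{2t}}=\cdot) = \tfrac12 \P_x(X_{Z_t}=\cdot)+\tfrac12\P_x(X_{Z_t+\tilde Z_{2t}}=\cdot)$, so by the triangle inequality
\[
\bar d_G(2t)\le \tfrac12 \bar d_G(t) + \tfrac12 \max_{x,y}\|\P_x(X_{Z_t+\tilde Z_{2t}}\in\cdot)-\P_y(X_{Z_t+\tilde Z_{2t}}\in\cdot)\|.
\]
To bound the second term I would construct an optimal coupling of $X_{Z_t}$ started from $x$ and $Y_{Z_t}$ started from $y$ that disagree with probability exactly $\|\P_x(X_{Z_t}\in\cdot)-\P_y(X_{Z_t}\in\cdot)\|\le \bar d_G(t)$; on the agreement event the two chains can be taken equal thereafter, so the contribution is zero, while on the disagreement event the further evolution over the independent time $\tilde Z_{2t}$ contributes at most $\bar d_G(2t)$. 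This gives
\[
\bar d_G(2t)\le \tfrac12\bar d_G(t)+\tfrac12\bar d_G(t)\bar d_G(2t),
\]
which solves to $\bar d_G(2t)\le \bar d_G(t)/(2-\bar d_G(t))$.

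For the iteration, I would first note that $\bar d_G$ is monotone non-increasing in $t$ (by the same coupling argument used for Lemma~\ref{lem:dgmon} applied to $\|\P_x-\P_y\|$ in place of $\|\P_x-\pi\|$). Hence $\bar d_G(2^{j}t)\le \beta$ for every $j\ge 0$, and each application of the doubling inequality divides by at least $2-\beta$:
\[
\bar d_G(2^k t)\le \frac{\bar d_G(t)}{(2-\beta)^k}.
\]
Finally, the elementary inequality $\tfrac{1}{2-\beta}\le \tfrac{1+\beta}{2}$ for $\beta\in[0,1]$ (equivalent to $\beta(1-\beta)\ge 0$) upgrades this to the claimed bound.

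The main delicate point is the self-referential step: one must really get $\bar d_G(2t)$ (not $\bar d_G(t)$) as the factor on the disagreement event, which uses both the independence of $\tilde Z_{2t}$ from the history of the coupled pair up to time $Z_t$ and the worst-case definition of $\bar d_G$ over all starting pairs. Everything else is a clean algebraic iteration.
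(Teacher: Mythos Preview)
Your argument is correct and follows essentially the same route as the paper. Both proofs rest on the same distributional decomposition (the paper phrases it as $Z_{2t}=Z_t+(Z_{2t}-Z_t)$ with $Z_{2t}-Z_t$ equal to $0$ or an independent Geometric of mean $2t$, each with probability $\tfrac12$; you phrase it as $Z_{2t}\stackrel{d}{=}Z_t+B\tilde Z_{2t}$) and on the standard coupling/submultiplicativity step to arrive at the same key inequality $\bar d_G(2t)\le \tfrac12\bar d_G(t)+\tfrac12\bar d_G(t)\bar d_G(2t)$. The only cosmetic difference is in how you finish: the paper replaces $\bar d_G(2t)$ by $\bar d_G(t)$ on the right via monotonicity to get $\bar d_G(2t)\le \tfrac{1+\beta}{2}\bar d_G(t)$ directly, whereas you solve the implicit inequality to obtain the slightly sharper $\bar d_G(2t)\le \bar d_G(t)/(2-\bar d_G(t))$ and then invoke $\tfrac{1}{2-\beta}\le \tfrac{1+\beta}{2}$.
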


\begin{proof}[\textbf{Proof}]
As in the proof of Lemma~\ref{lem:dgmon} we can write
$Z_{2t} = (Z_{2t}-Z_t) + Z_t$, where $Z_{2t}-Z_t$ and $Z_t$ are independent. Hence it is easy to show (similar to the case for deterministic times) that
\begin{align}\label{eq:dbar}
\bar{d}_G(2t) \leq \bar{d}_G(t) \max_{x,y}\|\P_x(X_{Z_{2t}-Z_t} = \cdot) - \P_y(X_{Z_{2t}-Z_t} = \cdot)\|.
\end{align}
By the coupling of $Z_{2t}$ and $Z_t$ it is easy to see that $Z_{2t}-Z_t$ can be expressed as follows:
\[
Z_{2t}-Z_t = (1-\xi) + \xi G_{2t},
\]
where $\xi$ is a Bernoulli$\displaystyle (\tfrac{1}{2})$ random variable and $G_{2t}$ is a Geometric random variable of mean $2t$ independent of $\xi$. By the triangle inequality we get that
\[
\|\P_x(X_{Z_{2t}-Z_t} = \cdot) - \P_y(X_{Z_{2t}-Z_t} = \cdot)\| \leq \frac{1}{2} + \frac{1}{2} \|\P_x(X_{G_{2t}} = \cdot) - \P_y(X_{G_{2t}} = \cdot)\| = \frac{1}{2}+ \frac{1}{2}\bar{d}_G(2t),
\]
and hence \eqref{eq:dbar} becomes
\[
\bar{d}_G(2t) \leq \bar{d}_G(t)\left(\frac{1}{2}+\frac{1}{2}\bar{d}_G(2t) \right) \leq \frac{1}{2}\bar{d}_G(t)\left(1+\bar{d}_G(t) \right),
\]
where for the second inequality we used the monotonicity property of $\bar{d}_G$ (same proof as for $d_G(t)$).
Thus, since $t$ satisfies $\bar{d}_G(t)\leq \beta$, we get that
\[
\bar{d}_G(2t) \leq \left(\frac{1+\beta}{2} \right) \bar{d}_G(t),
\]
and hence iterating we deduce the desired inequality.
\end{proof}
Combining Lemma~\ref{lem:submult} with Lemma~\ref{lem:septotal} we get the following:
\begin{corollary}\label{cor:submult}
Let $\beta<1$. 
If \ $t$ is such that $d_G(t)\leq \beta/2$, then for all $k$ we have that
\[
d_G(2^kt) \leq 2 \left(\frac{1+\beta}{2}\right)^k d_G(t).
\]
Also if $d_G(t)\leq \alpha <1/2$, then there exists a constant $c=c(\alpha)$ depending only on $\alpha$, such that $d_G(ct)\leq 1/4$.
\end{corollary}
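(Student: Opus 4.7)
The plan is to transfer the submultiplicative estimate of Lemma~\ref{lem:submult}, which is stated for $\bar{d}_G$, to the ordinary total variation distance $d_G$ by using the comparison in Lemma~\ref{lem:septotal} applied to $N_t=Z_t$, namely $d_G(t) \leq \bar{d}_G(t) \leq 2 d_G(t)$.

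For the first assertion, suppose $d_G(t) \leq \beta/2$. Then the right-hand comparison above gives $\bar{d}_G(t) \leq 2d_G(t) \leq \beta < 1$, so the hypothesis of Lemma~\ref{lem:submult} is satisfied and
\[
\bar{d}_G(2^k t) \;\leq\; \left(\frac{1+\beta}{2}\right)^{\!k} \bar{d}_G(t).
\]
Applying the left-hand comparison $d_G(2^k t) \leq \bar{d}_G(2^k t)$ on the left, and then $\bar{d}_G(t) \leq 2 d_G(t)$ on the right, yields the desired bound
\[
d_G(2^k t) \;\leq\; 2\left(\frac{1+\beta}{2}\right)^{\!k} d_G(t).
\]

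For the second assertion, suppose $d_G(t) \leq \alpha < 1/2$. Set $\beta = 2\alpha \in (0,1)$, so that $d_G(t) \leq \beta/2$, and apply the first part to obtain, for every $k \in \N$,
\[
d_G(2^k t) \;\leq\; 2\alpha\left(\frac{1+2\alpha}{2}\right)^{\!k}.
\]
Since $(1+2\alpha)/2 < 1$, one can choose $k = k(\alpha)$ large enough (depending only on $\alpha$) that the right-hand side is at most $1/4$; setting $c = 2^{k(\alpha)}$ gives $d_G(ct) \leq 1/4$ as required. The only minor subtlety is the factor $2$ loss when passing between $d_G$ and $\bar{d}_G$, which is what forces the hypothesis $d_G(t) \leq \beta/2$ (rather than $\leq \beta$) and the prefactor $2$ in the displayed inequality; the rest is a direct chaining of the two previous lemmas and a geometric iteration.
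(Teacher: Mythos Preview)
Your proof is correct and follows exactly the route the paper indicates: the corollary is stated there as a direct consequence of Lemma~\ref{lem:submult} and Lemma~\ref{lem:septotal}, and you have filled in precisely those details---using $d_G \le \bar d_G \le 2d_G$ to convert the hypothesis $d_G(t)\le\beta/2$ into $\bar d_G(t)\le\beta$, applying the submultiplicative bound for $\bar d_G$, and converting back. The second assertion is then the obvious specialization with $\beta=2\alpha$ and a choice of $k=k(\alpha)$.
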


\section{Hitting large sets}\label{sec:hitting}
In this section we are going to give the proof of Theorem~\ref{thm:mix-hit}.
We first prove an equivalence that does not require reversibility.

\begin{theorem}\label{thm:tgthit}
Let $\alpha<1/2$. For every chain $\tg \asymp \thit(\alpha)$. (The implied constants depend on $\alpha$.)
\end{theorem}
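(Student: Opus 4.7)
I prove the two inequalities of Theorem~\ref{thm:tgthit} separately, writing $c_\alpha$ and $C_\alpha$ for positive constants depending only on $\alpha$.

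\textbf{Direction $\thit(\alpha) \lesssim \tg$.} Starting from $d_G(\tg) \leq 1/4$, Corollary~\ref{cor:submult} (applied with $\beta = 1/2$) yields $T = C_\alpha \tg$ with $d_G(T) \leq \alpha/2$. Then for every starting state $x$ and every set $A$ with $\pi(A) \geq \alpha$,
\[
\P_x(\tau_A \leq Z_T) \;\geq\; \P_x(X_{Z_T} \in A) \;\geq\; \pi(A) - d_G(T) \;\geq\; \alpha/2.
\]
By Markov's inequality applied to $Z_T$, $\P(Z_T > 4T/\alpha) \leq \alpha/4$, so $\P_x(\tau_A \leq 4T/\alpha) \geq \alpha/4$. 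Iterating by the strong Markov property gives $\P_x(\tau_A > 4nT/\alpha) \leq (1-\alpha/4)^n$, whence $\E_x[\tau_A] \leq 16T/\alpha^2$, establishing $\thit(\alpha) \leq C'_\alpha \tg$.

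\textbf{Direction $\tg \lesssim \thit(\alpha)$.} The key inequality, in the spirit of Lemma~\ref{lem:tgtstop}, is the following: for any stopping time $\tau$ with $\P_x(X_\tau \in \cdot) = \pi$,
\[
\bigl\|\P_x(X_{Z_t} \in \cdot) - \pi \bigr\| \;\leq\; \P(Z_t < \tau) \;\leq\; \E_x[\tau]/t.
\]
To see this, condition on $\{Z_t \geq \tau\}$: by the memoryless property of $Z_t$, the residual $Z_t - \tau$ is Geom$(1/t)$ and independent of $\mathcal{F}_\tau$, so by the strong Markov property at $\tau$, $X_{Z_t}$ conditional on $\{Z_t \geq \tau\}$ is distributed as the chain started at $X_\tau \sim \pi$ and run for an independent geometric time, which remains $\pi$ by stationarity. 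Hence the deviation comes only from the event $\{Z_t < \tau\}$; the bound on its probability follows from $\P(Z_t < k) = 1 - (1-1/t)^{k-1} \leq k/t$ and averaging over $\tau$.

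It therefore suffices to construct, for each starting state $x$, a stationary stopping time $\tau_x$ with $\E_x[\tau_x] \leq C_\alpha \thit(\alpha)$; then taking $t = 4 C_\alpha \thit(\alpha)$ gives $\tg \leq t$. The main technical work is this construction, and it is the place where non-reversibility causes a difficulty: the random-target time $\tau_Y$ with $Y \sim \pi$ drawn independently has $\E_x[\tau_Y]$ equal to the target time by the random target lemma, but this can exceed $\thit(\alpha)$ by factors depending on the state space. The construction I would use is iterative: in each round, hit a set $A$ of $\pi$-mass at least $\alpha$ (taking expected time at most $\thit(\alpha)$), and then terminate at the entry state with a probability designed in the spirit of the filling rule of Section~\ref{sec:stopproof}, but restricted so that (a) each round terminates with probability at least $c_\alpha > 0$ and (b) the aggregate stopping distribution across rounds equals $\pi$. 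Summing the geometric series over rounds yields $\E_x[\tau_x] \leq \thit(\alpha)/c_\alpha \leq C_\alpha \thit(\alpha)$, completing the proof.
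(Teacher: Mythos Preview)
Your first direction ($\thit(\alpha)\lesssim \tg$) is correct and essentially matches the paper's argument; the paper phrases the iteration slightly differently (dominating $\tau_A$ by a sum of i.i.d.\ geometrics rather than truncating $Z_T$ via Markov's inequality), but the idea is the same.

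The second direction has a genuine gap. Your plan is to build, for every starting state $x$, a stationary stopping time $\tau_x$ with $\E_x[\tau_x]\le C_\alpha\,\thit(\alpha)$ and then invoke the inequality $d_G(t)\le \E_x[\tau_x]/t$. But such a construction would in particular give $\tstop\le C_\alpha\,\thit(\alpha)$ for \emph{every} chain, and this is false: the ``greasy ladder'' (Example in Section~10) has $\thit(\alpha)=O(1)$ while $\tstop=n-1$. So no filling-rule variant restricted to hitting large sets can succeed in general; once most of the $\pi$-mass has been filled, the residual target measure is concentrated on a set of small $\pi$-mass, and hitting such a set is not controlled by $\thit(\alpha)$. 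Your sketch (``terminate at the entry state with a probability designed in the spirit of the filling rule \ldots so that each round terminates with probability at least $c_\alpha$'') cannot be made to satisfy both (a) and (b) simultaneously for this reason.

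The paper avoids stationary stopping times entirely in this direction. Instead it argues by contradiction: if $d_G(t)>\gamma$ for some $\gamma\in(\alpha,1/2)$, then there exist $x$ and $A$ with $\pi(A)-\P_x(X_{Z_t}\in A)>\gamma$; one then defines the ``good'' set $B=\{y:\P_y(X_{Z_t}\in A)\ge \pi(A)-\alpha\}$, checks $\pi(B)\ge\alpha$ via stationarity, and shows that if $\max_z\E_z[\tau_B]$ were small compared to $t$, then the chain started at $x$ would enter $B$ quickly and (using the memoryless property of $Z_t$) would have $\P_x(X_{Z_t}\in A)$ too large, contradicting the choice of $A$. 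This produces a specific large set $B$ that is hard to hit, which is exactly what $\thit(\alpha)\gtrsim \tg$ requires, without ever comparing to $\tstop$.
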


\begin{proof}[\textbf{Proof}]
We will first show that $\tg \geq c \thit(\alpha)$.
By Corollary~\ref{cor:submult}  there exists $k=k(\alpha)$ so that $d_G(2^{k}\tg)\leq \frac{\alpha}{2}$. Let $t=2^k\tg$.
Then for any starting point $x$ we have that
\[
\P_x(X_{Z_t} \in A) \geq \pi(A) - \alpha/2 \geq \alpha/2.
\]
Thus by performing independent experiments, we deduce that $\tau_A$ is stochastically dominated by $\sum_{i=1}^{N}G_i$, where $N$ is a Geometric random variable of success probability $\alpha/2$ and the $G_i$'s are independent Geometric random variables of success probability $\frac{1}{t}$. Therefore for any starting point $x$ we get that
\[
\E_x[\tau_A] \leq \frac{2}{\alpha}t,
\]
and hence this gives that
\[
\max_{x,A:\pi(A)\geq \alpha} \E_x[\tau_A] \leq  \frac{2}{\alpha} 2^k \tg.
\]
In order to show the other direction, let $t' < \tg$.
Then $d_G(t') > 1/4$. For a given $\alpha < 1/2$, we fix $\gamma \in (\alpha,1/2)$. From Corollary~\ref{cor:submult} we have that there exists a positive constant $c=c(\gamma)$ such that
\[
d_G(ct')>\gamma.
\]
Set $t=ct'$.
Then there exists a set $A$ and a starting point $x$ such that
\[
\pi(A) - \P_x(X_{Z_t} \in A) > \gamma,
\]
and hence $\pi(A)>\gamma$,
or equivalently
\[
\P_x(X_{Z_t} \in A) < \pi(A) -\gamma.
\]
We now define a set $B$ as follows:
\[
B= \{y: \P_y(X_{Z_t} \in A)\geq \pi(A)-\alpha\},
\]
where $c$ is a constant smaller than $\alpha$.
Since $\pi$ is a stationary distribution, we have that
\begin{align*}
\pi(A) = \sum_{y \in B}\P_y(X_{Z_t} \in A)\pi(y) + \sum_{y \notin B}\P_y(X_{Z_t} \in A)\pi(y) \leq \pi(B) + \pi(A) - \alpha,
\end{align*}
and hence rearranging, we get that
\[
\pi(B) \geq \alpha.
\]
We will now show that for a constant $\theta$ to be determined later we have that
\begin{align}\label{eq:goal}
\max_z \E_z[\tau_B] >\theta t.
\end{align}
We will show that for a $\theta$ to be specified later, assuming
\begin{align}\label{eq:contr}
\max_z \E_z[\tau_B] \leq \theta t
\end{align}
will yield a contradiction.
\newline
By Markov's inequality, \eqref{eq:contr} implies that
\begin{align}\label{eq:exper}
\P_x(\tau_B \geq 2 \theta t) \leq \frac{1}{2}.
\end{align}
For any positive integer $M$ we have that
\[
\P_x(\tau_B \geq 2M\theta t) = \P_x(\tau_B\geq 2M\theta t|\tau_B \geq 2(M-1)\theta t) \P_x(\tau_B\geq 2(M-1)\theta t),
\]
and hence iterating we get that
\begin{align}\label{eq:ineqtau}
\P_x(\tau_B \geq 2M\theta t) \leq \frac{1}{2^M}.
\end{align}
By the memoryless property of the Geometric distribution and the strong Markov property applied at the stopping time $\tau_B$, we get that
\begin{align*}
&\P_x(X_{Z_t} \in A) \geq \P_x(\tau_B \leq 2 \theta M t, Z_t \geq \tau_B,X_{Z_t} \in A) \\
&\geq \P_x(\tau_B \leq 2\theta M t, Z_t \geq \tau_B)\P_x(X_{Z_t} \in A| \tau_B \leq 2\theta M t, Z_t \geq \tau_B)
\\
&\geq \P_x(\tau_B \leq 2\theta M t) \P_x(Z_t \geq \lfloor 2\theta M t\rfloor) \left( \inf_{w \in B} \P_w(X_{Z_t} \in A)\right),
\end{align*}
where in the last inequality we used the independence between $Z$ and $\tau_B$.
But since $Z_t$ is a Geometric random variable, we obtain that
\[
\P_x(Z_t \geq \lfloor 2\theta M t\rfloor) \geq  \left(1-\frac{1}{t}\right)^{2\theta M t},
\]
which for $2\theta M t>1$ gives that
\begin{align}\label{eq:geom}
\P_x(Z_t \geq \lfloor 2\theta M t\rfloor) \geq 1-2\theta M.
\end{align}
(\eqref{eq:contr} implies that $\theta t \geq 1$, so certainly $2\theta M t >1$.)
\newline
We now set $\theta=\frac{1}{2M2^{M}}$. Using \eqref{eq:exper} and \eqref{eq:geom} we deduce that
\begin{align*}
\P_x(X_{Z_t} \in A) \geq \left(1-2^{-M} \right)^2 (\pi(A) - \alpha).
\end{align*}
Since $\gamma > \alpha$, we can
take $M$ large enough so that $\left(1-2^{-M}\right)^2 (\pi(A)-\alpha) > \pi(A)-\gamma$, and we get a contradiction to \eqref{eq:contr}. \

Thus \eqref{eq:goal} holds; since $\pi(B) \geq \alpha$, this completes the proof.
\end{proof}

\begin{proof}[\textbf{Proof of Theorem~\ref{thm:mix-hit}}]
Combining Theorem~\ref{thm:tgtlrev} with Theorem~\ref{thm:tgthit} gives the result in the reversible case.
\end{proof}
%

\section{Equivalence between Ces{\`a}ro mixing and $\tg$}\label{sec:cesaro}
In this section we will show that the notion of mixing at a geometric time defined in Section~\ref{sec:prel} and the Ces{\`a}ro mixing used by Lov{\'a}sz and Winkler~\cite{LWmixing} are equivalent for all chains. First, let us recall the definition of Ces{\`a}ro mixing. Let $U_t$ be a random variable independent of the chain uniform on $\{1,\ldots,t\}$. We define
\begin{align*}
\tces = \min\left\{t\geq 0: \max_x \|\P_x(X_{U_t} = \cdot) - \pi \| \leq \frac{1}{4} \right\}.
\end{align*}

\begin{proposition}\label{prop:tcestg}
For all chains $\tg \asymp \tces$.
\end{proposition}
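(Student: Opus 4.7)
The plan is to establish the two inequalities $\tces \lesssim \tg$ and $\tg \lesssim \tces$ separately. Both use the same template: build a random time $\tau$ (independent of the chain) whose law at time $\tau$ is approximately stationary, compare $N$ and $N + \tau$ via Claim~\ref{cl:total} and Claim~\ref{cl:function}, and combine via the triangle inequality. The only amplification tool available is Corollary~\ref{cor:submult}, which operates on the geometric side, and we use it in both directions.

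For the easier direction $\tces \lesssim \tg$: First invoke Corollary~\ref{cor:submult} to produce a universal constant $c_1$ with $d_G(c_1 \tg) \leq 1/8$, and set $\tau = Z_{c_1 \tg}$ independent of both $U_s$ and the chain, so that $\E[\tau] = c_1 \tg$. Conditioning on $U_s = j$ and using the Markov property gives $\|P_x(X_{U_s+\tau}=\cdot) - \pi\| \leq \max_y \|P_y(X_\tau = \cdot) - \pi\| \leq 1/8$. Since $P(U_s = j) \leq 1/s$ is non-increasing on $\N$, Claim~\ref{cl:total} yields $\|P(U_s + \tau = \cdot) - P(U_s = \cdot)\| \leq \E[\tau]/s = c_1 \tg/s$, which Claim~\ref{cl:function} transfers to $\|P_x(X_{U_s+\tau}=\cdot) - P_x(X_{U_s}=\cdot)\| \leq c_1\tg/s$. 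Taking $s = 8 c_1 \tg$ and applying the triangle inequality delivers $\|P_x(X_{U_s} = \cdot) - \pi\| \leq 1/4$, so $\tces \leq 8 c_1 \tg$.

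For the direction $\tg \lesssim \tces$: Take $\tau = U_\tces$, independent of $Z_t$ and the chain, with $\E[\tau] = (\tces+1)/2$. The definition of $\tces$ together with conditioning on $Z_t$ and the Markov property gives $\|P_x(X_{Z_t+\tau}=\cdot) - \pi\| \leq 1/4$. Since $P(Z_t = j) \leq 1/t$ is decreasing, Claim~\ref{cl:total} and Claim~\ref{cl:function} yield $\|P_x(X_{Z_t+\tau}=\cdot) - P_x(X_{Z_t}=\cdot)\| \leq (\tces+1)/(2t)$. The triangle inequality therefore gives $d_G(t) \leq 1/4 + (\tces+1)/(2t)$, and for $t = 3\tces$ the right-hand side is strictly smaller than $1/2$. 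Corollary~\ref{cor:submult} then produces a universal constant $c_2$ with $d_G(c_2 \cdot 3\tces) \leq 1/4$, so $\tg \leq 3 c_2 \tces$.

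The main obstacle is the second direction: the triangle argument on its own can only supply $d_G(t) \leq 1/4 + \varepsilon$ with $\varepsilon > 0$, never $\leq 1/4$, because the bound from Claim~\ref{cl:total} degrades only like $\E[\tau]/t$ and cannot be driven to zero. Corollary~\ref{cor:submult} is therefore the decisive tool, amplifying any bound on $d_G$ that is strictly below $1/2$ down to $1/4$ at only a constant multiplicative cost in $t$.
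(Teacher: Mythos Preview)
Your proof follows essentially the same approach as the paper's: in each direction add an independent random time (geometric in one direction, uniform in the other), use Claims~\ref{cl:total} and~\ref{cl:function} to compare the chain at the two times, and invoke Corollary~\ref{cor:submult} on the geometric side to amplify. One minor numerical slip: with $t = 3\tces$ the bound $\tfrac{1}{4} + (\tces+1)/(6\tces)$ is not below $\tfrac{1}{2}$ when $\tces \in \{1,2\}$; the paper takes $t = 8\tces$, which gives $\tfrac{1}{4} + (\tces+1)/(16\tces) \leq \tfrac{3}{8}$ for all $\tces \geq 1$ and feeds cleanly into Corollary~\ref{cor:submult}.
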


\begin{proof}[\textbf{Proof}]
For each $s$, let $U_s$ be a uniform random variable in $\{1,\ldots,s\}$ and $Z_s$ an independent geometric random variable of mean $s$.

We will first show that there exists a positive constant $c_1$ such that
\begin{align}\label{eq:upperbound}
\tces \leq c_1 \tg.
\end{align}
Let $t = \tg(1/8)$, then for all $x$
\begin{align}\label{eq:deftotal}
\|\P_x(X_{Z_t} = \cdot) - \pi \| \leq \frac{1}{8}.
\end{align}
From Claims~\ref{cl:total} and~\ref{cl:function} we get that
\[
\| \P_x(X_{U_{8t}}=\cdot) - \P_x(X_{U_{8t} + Z_t} = \cdot) \| \leq \|\P_x(U_{8t}=\cdot) - \P_x(U_{8t} + Z_t = \cdot) \| \leq \frac{1}{8}.
\]
By the triangle inequality for total variation we deduce
\begin{align*}
\|\P_x(X_{U_{8t}}=\cdot) - \pi \| \leq \| \P_x(X_{U_{8t}}=\cdot) - \P_x(X_{U_{8t} + Z_t} = \cdot) \| + \|\P_x(X_{U_{8t} + Z_t} = \cdot) - \pi\|
\end{align*}
From~\eqref{eq:deftotal} and Claim~\ref{cl:decr} it follows that
\[
\|\P_x(X_{U_{8t} + Z_t} = \cdot) - \pi\| \leq \|\P_x(X_{Z_t} = \cdot) - \pi\| \leq \frac{1}{8}.
\]
Hence, we conclude
\[
\|\P_x(X_{U_{8t}}=\cdot) - \pi \| \leq \frac{1}{4},
\]
which gives that $\tces \leq 8 t$. From Corollary~\ref{cor:submult} we get that there exists a constant $c$ such that
$\tg(1/8) \leq c\tg$ and this concludes the proof of~\eqref{eq:upperbound}.

We will now show that there exists a positive  constant $c_2$ such that
\[
\tg \leq c_2 \tces.
\]
Let $t= \tces$, i.e.\ for all $x$
\begin{align}\label{eq:cesaro}
\|\P_x(X_{U_t} = \cdot) - \pi \| \leq \frac{1}{4}.
\end{align}
From Claims~\ref{cl:total} and~\ref{cl:function} we get that
\[
\| \P_x(X_{Z_{8t}}=\cdot) - \P_x(X_{U_{t} + Z_{8t}} = \cdot) \| \leq \|\P_x(Z_{8t}=\cdot) - \P_x(Z_{8t} + U_t = \cdot) \| \leq \frac{1}{8}.
\]
So, in the same way as in the proof of~\eqref{eq:upperbound} we obtain
\[
\|\P_x(X_{Z_{8t}}=\cdot) - \pi \| \leq \frac{3}{8}.
\]
Hence, we deduce that $\tg(3/8) \leq 8t$ and from Corollary~\ref{cor:submult} again there exists a positive constant $c'$ such that $\tg \leq c'\tg(3/8)$ and this finishes the proof.
\end{proof}

\section{A new proof of $\tprod \asymp \tl$ for reversible chains}\label{sec:newproof}

Recall the definition $\displaystyle \tprod= \max_{x,A} \pi(A)\E_x[\tau_A]$ from Remark~\ref{rem:aldous}. As noted there, Aldous \cite{Aldousmixing} showed the equivalence between the mixing time $\tct$ of a continuous time reversible chain and $\tprod$. Using the equivalence $\tl \asymp \tct$ (see \cite[Theorem~20.3]{LevPerWil}) it follows that for a reversible chain $\tprod \asymp \tl$. In this section we give a direct proof. Recall that $\tprod \geq c \tl$ for a reversible chain, where $c$ is a positive constant, follows from Theorem~\ref{thm:mix-hit}.

We will first state and prove a preliminary lemma, which is a variant of Kac's lemma (see for instance \cite[Lemma~21.13]{LevPerWil}). To that end we define for all $k$ and all sets $A$
\[
\tau^+_A = \min\{t \geq 1: X_t \in A \} \ \ \text{ and } \ \ \tau^{(k)}_A = \min\{t\geq k: X_t \in A \}.
\]

\begin{lemma}\label{lem:kac}
We have that
\[
\sum_{x \in A} \pi(x) \E_x[\tau^{(k)}_A] \leq k.
\]
\end{lemma}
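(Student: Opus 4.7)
My plan is to extend $X$ to a bi-infinite stationary Markov chain $(X_t)_{t\in\Z}$ (standard by Kolmogorov extension) and run a mass-transport / double-counting argument. For each ``visit'' $t$ (i.e.\ each $t$ with $X_t\in A$), I set $R_t = \min\{s\geq k : X_{t+s}\in A\}$; by the Markov property, conditional on $X_t=x$ this has the law of $\tau_A^{(k)}$ under $\P_x$, so
\[
\sum_{x\in A}\pi(x)\,\E_x[\tau_A^{(k)}] \;=\; \E\bigl[\mathbf{1}(X_0\in A)\,R_0\bigr].
\]
The idea will be to let each visit $t$ spread unit mass across the interval $[t,\,t+R_t)$ and then bound, uniformly, the total mass received at each single time $u$.

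Concretely, for $u\in\Z$ I put
\[
M(u) \;=\; \bigl|\{t\in\Z : X_t\in A,\; t\leq u < t+R_t\}\bigr|.
\]
A routine Fubini-and-stationarity computation with the substitution $s=u-t$ will give
\[
\E[M(u)] \;=\; \sum_{s\geq 0}\P(X_0\in A,\,R_0>s) \;=\; \E\bigl[\mathbf{1}(X_0\in A)\,R_0\bigr] \;=\; \sum_{x\in A}\pi(x)\,\E_x[\tau_A^{(k)}].
\]
So the whole proof will reduce to establishing the deterministic pointwise estimate $M(u)\leq k$, which I regard as the main obstacle.

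To establish $M(u)\leq k$, I will let $v = \max\{s\leq u : X_s\in A\}$, which is finite almost surely by irreducibility. Suppose $t\leq u$ is a visit contributing to $M(u)$, so $R_t > u-t$. If $u-t\geq k$, then by the very definition of $R_t$ there is no visit in $[t+k,u]$; since $v\in[t,u]$ is a visit this forces $v<t+k$, i.e.\ $t\geq v-k+1$. If instead $u-t<k$, the same conclusion follows from $t>u-k\geq v-k$. Combined with the obvious $t\leq v$, every contributing $t$ lies in the length-$k$ integer window $[v-k+1,v]$, which contains at most $k$ visits.

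Combining $M(u)\leq k$ pointwise with the identity for $\E[M(u)]$ will yield the claimed inequality. Two sanity checks are worth keeping in mind while writing up: the argument uses only the Markov property and stationarity (no reversibility), and for $k=1$ the window collapses to $\{v\}$ so $M(u)=1$ identically, recovering Kac's lemma with equality.
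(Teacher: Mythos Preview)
Your proof is correct and takes a genuinely different route from the paper. The paper's argument uses the time-reversed chain $\hat{P}$: reversing paths of length $t$ with $x_0\in A$ and $x_k,\ldots,x_{t-1}\notin A$ yields the inequality
\[
\sum_{x\in A}\pi(x)\,\P_x(\tau_A^{(k)}\geq t)\;\leq\;\sum_y \pi(y)\,\hat{\P}_y(\hat{\tau}_A^+\in(t-k,t]),
\]
and summing over $t$ telescopes to $k$. Your approach instead embeds the chain in a bi-infinite stationary process and runs a mass-transport: each visit $t$ sends unit mass to each time in $[t,t+R_t)$, the expected mass received at any fixed $u$ equals $\sum_{x\in A}\pi(x)\,\E_x[\tau_A^{(k)}]$ by stationarity, and the deterministic bound $M(u)\leq k$ comes from the combinatorial observation that all contributing visits must lie in the window $[v-k+1,v]$ where $v$ is the last visit to $A$ at or before $u$.

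Both arguments use only irreducibility and stationarity (the reversed chain exists without any reversibility hypothesis), so neither is more general in that respect. Your pointwise bound is arguably more transparent and gives a clean picture of \emph{why} the factor $k$ appears, while the paper's path-reversal computation is shorter once one is comfortable with $\hat{P}$. One minor point to tighten in your write-up: the a.s.\ finiteness of $v$ follows from $\pi(A)>0$ and stationarity (or ergodicity), not directly from irreducibility, so you might state that explicitly.
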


\begin{proof}[\textbf{Proof}]
Let $\hat{P}$ be the transition matrix of the reversed chain, i.e.
\[
\hat{P}(x,y) = \frac{\pi(y)P(y,x)}{\pi(x)}.
\]
Then for all $t\geq k$ and $x_0,\ldots,x_t$ in the state space $S$, we have
\[
\pi(x_0) \prod_{i=1}^{t}P(x_{i-1},x_i) = \pi(x_t) \prod_{i=1}^{t}\hat{P}(x_i,x_{i-1}).
\]
Summing over all $x_0=x\in A,x_1 \in S,\ldots,x_{k-1} \in S,x_k \notin A,\ldots,x_{t-1}\notin A,x_t=y\in S$ we obtain
\begin{align}\label{eq:ineqkac}
\sum_{x \in A} \pi(x)\P_{x}(\tau^{(k)}_A \geq t) \leq \sum_{y}\pi(y) \hat{\P}_{y}(\hat{\tau}^+_A \in (t-k,t]),
\end{align}
where $\hat{\tau}_A^+$ stands for the first positive entrance time to $A$ for the reversed chain. Summing \eqref{eq:ineqkac} over all $t$ we get that
\begin{align*}
\sum_{x \in A}\pi(x) \E_x[\tau^{(k)}_A] &= \sum_t \sum_{x \in A} \pi(x) \P_{x}(\tau^{(k)}_A \geq t) \leq \sum_t\sum_y \pi(y) \sum_{s=t-k+1}^{t} \hat{\P}_y(\hat{\tau}^+_A=s)\\
&= \sum_y \pi(y)\sum_s \sum_{t=s}^{s+k-1}\hat{\P}_y(\hat{\tau}^+_A=s) = \sum_y \pi(y) \sum_s k \hat{\P}_y(\hat{\tau}_A^+=s)=k.
\end{align*}
\end{proof}

\begin{proof}[\textbf{Proof of $\tprod \leq c' \tl$}]
To simplify notation, let the chain $X$ be lazy and reversible.
From Lemma~\ref{lem:kac} and Markov's inequality it follows that for all $k$ and all sets $A$
\begin{align}\label{eq:statst}
\P_{\pi|_A}\left(\tau_A^{(k)} \geq \frac{2k}{\pi(A)}\right) \leq \frac{1}{2},
\end{align}
where $\pi|_A$ stands for the restriction of the stationary measure $\pi$ on $A$.

Take now $k=2\tl$. Then using submultiplicativity we get that $\displaystyle \bar{d}_L(k)\leq \bar{d}_L(\tl)^2\leq \tfrac{1}{4}$. Let $X_0 \sim \pi|_A$ and $z \in S$. Then
\begin{align*}
\|P_L^k(X_0,\cdot) - P_L^k(z,\cdot) \| \leq \frac{1}{4}.
\end{align*}
We can couple the two chains, $X_k,X_{k+1},\ldots$ with $X_0 \sim \pi|_A$ and $Y_k,Y_{k+1},\ldots$ with $Y_0=z$, so that they disagree with probability $\|P_L^k(X_0,\cdot) - P_L^k(z,\cdot) \|$.

Thus we obtain
\begin{align*}
\left| \P_{\pi|_A}\left(\tau_A^{(k)} \geq \frac{2k}{\pi(A)}\right) - \P_z\left(\tau_A^{(k)} \geq \frac{2k}{\pi(A)}\right)\right| & \leq \P_{\pi|_A,z}\left(\left\{\tau_A^{(k)}(X)\geq \frac{2k}{\pi(A)}\right\}
\bigtriangleup \left\{\tau_A^{(k)}(Y)\geq \frac{2k}{\pi(A)}\right\} \right)
\\ & \leq \P(\text{coupling fails}) \leq \frac{1}{4},
\end{align*}
and hence using \eqref{eq:statst} we get that
\[
\P_z\left(\tau_A^{(k)} \geq \frac{2k}{\pi(A)}\right) \leq \frac{3}{4}.
\]
Therefore for all $z$ we have that
\[
\P_z \left(\tau_A \geq \frac{2k}{\pi(A)}\right) \leq \P_z\left(\tau_A^{(k)} \geq \frac{2k}{\pi(A)}\right) \leq \frac{3}{4}.
\]
By performing independent experiments we see that $\tau_A$ is stochastically dominated by $\displaystyle \frac{2k}{\pi(A)}\mathrm{Geo}\left(\tfrac{3}{4}\right)$, where $\mathrm{Geo}$ stands for a Geometric random variable, and hence for all $z$ we get that
\[
\E_z\left[\tau_A\right] \leq \frac{8k}{3\pi(A)} = \frac{16\tl}{3\pi(A)}
\]
and this finishes the proof.
\end{proof}

\section{Application to robustness of mixing}\label{sec:robustness}

\begin{theorem}\label{thm:robust}
Let $T$ be a finite tree on $n$ vertices with unit conductances on the edges. Let $\widetilde{T}$ be a tree on the same set of vertices and edges as $T$ but with conductances on the edges satisfying
$c \leq c(x,y)\leq c'$, for all edges $e=\langle x,y \rangle$, where $c$ and $c'$ are two positive constants.
Then the mixing time of the lazy random walk on $T$ and on $\widetilde{T}$ are equivalent, i.e. in our notation, $\tl(T) \asymp \tl(\widetilde{T})$.
\end{theorem}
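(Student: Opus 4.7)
The plan is to apply Theorem~\ref{thm:mix-hit} to both trees, reducing the equivalence $\tl(T) \asymp \tl(\widetilde{T})$ to a comparison of the maximum hitting times of $\alpha$-large sets, which on a tree can be controlled by elementary electrical-network arguments.

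First I would compare stationary measures: since $c \cdot \deg(v) \leq c^{\widetilde T}(v) \leq c' \cdot \deg(v)$, we have $\pi^T(v) \asymp \pi^{\widetilde T}(v)$ pointwise, so for every $\alpha<1/2$ there is $\alpha'<1/2$ with $\pi^T(A) \geq \alpha \Rightarrow \pi^{\widetilde T}(A) \geq \alpha'$ and vice versa. Combined with Theorem~\ref{thm:mix-hit}, the problem then reduces to showing that for every $x$ and every $A \not\ni x$,
\[
\E_x^T[\tau_A] \asymp \E_x^{\widetilde T}[\tau_A],
\]
with implicit constants depending only on $c$ and $c'$.

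For this I would use the Green-function identity
\[
\E_x[\tau_A] = \Reff(x \leftrightarrow A) \sum_{v} c(v) \, \P_v(\tau_x < \tau_A),
\]
and show each of the three factors is robust under the perturbation. On the tree, $\Reff(x \leftrightarrow A)$ reduces via series/parallel operations on the skeleton to a rational function of the edge resistances $r(e)$ with positive coefficients; since $r^{\widetilde T}(e)/r^T(e) \in [1/c',1/c]$, we obtain $\Reff^{\widetilde T}(x \leftrightarrow A) \asymp \Reff^T(x \leftrightarrow A)$. The vertex weights satisfy $c^{\widetilde T}(v) \asymp c^T(v)$ immediately. Finally the harmonic function $\P_v(\tau_x < \tau_A)$ is also a ratio of positive linear combinations of forest weights (the matrix--forest theorem), hence is preserved up to a bounded factor. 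Multiplying these three comparisons yields the hitting-time equivalence and completes the argument.

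The main technical obstacle lies in the harmonic-function comparison when $A$ has several vertices. For a single target $A=\{a\}$ this is transparent: $\P_v(\tau_x<\tau_a) = \Reff_P(u,a)/\Reff_P(x,a)$, where $u$ is the projection of $v$ onto the unique $x$-to-$a$ path $P$, so both numerator and denominator are simple path-sums of resistances and the ratio is preserved up to a factor in $[c/c',c'/c]$. For multi-vertex $A$, one either collapses $A$ to a super-vertex and invokes the positive-rational structure of harmonic extensions, or proceeds by induction on the size of the tree, peeling away leaves of $T \setminus A$ and tracking how the harmonic values at internal vertices depend on the remaining conductances; either route gives the required multiplicative control.
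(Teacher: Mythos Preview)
Your reduction to Theorem~\ref{thm:mix-hit} and the comparison of stationary measures are fine, and the Green-function identity $\E_x[\tau_A] = \Reff(x\leftrightarrow A)\sum_v c(v)\,\P_v(\tau_x<\tau_A)$ is correct. The gap is in the third factor: your assertion that $\P_v(\tau_x<\tau_A)$ is ``preserved up to a bounded factor'' via the matrix--forest theorem is false. On a tree with $n$ vertices the forest weights are products of $n-1-|A|$ conductances, so the bound one extracts from positivity alone is $(c'/c)^{n-1-|A|}$, which depends on $n$. This is not mere slack in the estimate: take the binary tree of depth $d$, let $v$ be the root, $x$ a fixed leaf, $A$ the other $2^d-1$ leaves, put conductance $2$ on the edges of the $v$--$x$ path and $1$ elsewhere. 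A direct computation gives $\P_v(\tau_x<\tau_A)=2^{-d}$ for unit conductances but $\widetilde\P_v(\tau_x<\tau_A)=\tfrac{3}{7},\tfrac{7}{26},\ldots$ for $d=2,3,\ldots$, with ratio $4(2^d-1)/(d^2+d+1)\to\infty$. So the harmonic function is genuinely not robust term-by-term, and neither of your suggested routes (collapsing $A$, induction on leaves) will repair this, since they both ultimately track the same quantity.

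The paper avoids this difficulty entirely by never comparing hitting times of \emph{sets} across the two trees. Instead it introduces a central node $v$ (one for which every component of $T\setminus\{v\}$ has stationary mass at most $1/2$) and proves $\tl\asymp t_v:=\max_x\E_x[\tau_v]$ via Lemmas~\ref{lem:hittingtree} and~\ref{lem:lazytree}. The quantity $t_v$ involves only single-vertex hitting times, and for those the explicit tree formula (Lemma~\ref{tree}) expresses $\E_x[\tau_y]$ as a sum of ratios of conductance-sums, from which robustness under bounded perturbation is immediate. If you want to rescue your approach, you would need to show directly that the \emph{sum} $\sum_v c(v)\P_v(\tau_x<\tau_A)$ is robust (which is plausible, since the non-robust terms are individually small), but this requires a genuinely different argument from the one you sketched.
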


Before proving the theorem, we state and prove two lemmas which will be used in the proof but are also of independent interest.

\begin{lemma}\label{lem:hittingtree}
Let $T$ be a finite tree with edge conductances. For each subset $A$ of vertices and any vertex $v$ we have
\[
\max_{x} \E_x[\tau_A] \leq t_v \left(1 + \frac{1}{\pi(A)}\right),
\]
where $\tau_A$ stands for the first hitting time of $A$ by a simple random walk on $T$ and $t_v = \max_x \E_x[\tau_v]$.
\end{lemma}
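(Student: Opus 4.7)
My plan combines a strong-Markov reduction with a tree-specific electrical-network estimate.

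\textbf{Step 1 (strong-Markov reduction).} For any vertex $v$ and any starting state $x$, the pointwise inequality
\[
\tau_A \;\le\; \tau_v+\tau_A\circ\theta_{\tau_v}
\]
holds almost surely, since by time $\tau_v+\tau_A\circ\theta_{\tau_v}$ the walk has already visited $A$, so its first visit occurred no later. Taking $\E_x$ on both sides and applying strong Markov at $\tau_v$ yields
\[
\E_x[\tau_A]\;\le\;\E_x[\tau_v]+\E_v[\tau_A]\;\le\;t_v+\E_v[\tau_A].
\]
Maximising over $x$, the lemma reduces to the single-source bound
\[
\E_v[\tau_A]\;\le\;\frac{t_v}{\pi(A)}.
\]

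\textbf{Step 2 (tree bound via effective resistance).} For this remaining inequality I would use the electrical-network interpretation. The set-valued commute-time identity for reversible chains reads
\[
\E_v[\tau_A]+\E_\nu[\tau_v]=2m\,R_{\mathrm{eff}}(v,A),
\]
where $2m=\sum_x c(x)$ and $\nu$ is the harmonic hitting measure on $A$ from $v$; dropping the nonnegative $\E_\nu[\tau_v]$ gives
\[
\E_v[\tau_A]\;\le\;2m\,R_{\mathrm{eff}}(v,A).
\]
It is therefore enough to show $2m\,R_{\mathrm{eff}}(v,A)\,\pi(A)\le t_v$ on the tree. Because $T$ is a tree, contracting $A$ to a single vertex $a^*$ produces another edge-weighted tree $T/A$ in which the $v$-to-$a^*$ path is unique and $R_{\mathrm{eff}}^{T/A}(v,a^*)=R_{\mathrm{eff}}^{T}(v,A)$. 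Combining this additivity with the standard tree formula that expresses $\E_x[\tau_v]$ as a weighted sum of subtree masses along the path from $x$ to $v$, one can exhibit a vertex $x\in T$ (roughly, a deepest vertex in the subtree on the far side of $a^*$) whose hitting time satisfies $\E_x[\tau_v]\ge 2m\,R_{\mathrm{eff}}(v,A)\,\pi(A)$, and then the defining property $t_v\ge\E_x[\tau_v]$ closes the argument.

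\textbf{Main obstacle.} The real work is in the last bookkeeping. The general-reversible analogue $\pi(A)\E_v[\tau_A]\le C\,t_v$ is \emph{false}: Aldous's theorem gives this bound only with $t_v$ replaced by $\tct$, and for an adversarial $v$ one can have $\tct\gg t_v$ (consider $v$ a leaf in an otherwise highly connected structure). What makes the estimate go through on a tree is acyclicity: effective resistance is additive along the unique $v$-to-$A$ path, and the contracted graph $T/A$ is again a tree, so the two quantities $R_{\mathrm{eff}}(v,A)$ and $\pi(A)$ can be repackaged as a hitting-time lower bound on $t_v$. Formalising the "deepest vertex" selection, and the resulting lower bound on $\E_x[\tau_v]$, is where the genuine content of the proof lies.
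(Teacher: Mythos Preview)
Your Step~1 is exactly the paper's reduction. The problem is Step~2: it is both unfinished (as you acknowledge) and built on a false premise.

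You assert that the inequality $\pi(A)\,\E_v[\tau_A]\le t_v$ fails for general reversible chains and that acyclicity is what rescues it. This is not so: the paper proves $\E_v[\tau_A]\le t_v/\pi(A)$ for $v\notin A$ by an excursion argument that never invokes the tree hypothesis and in fact works for any irreducible finite chain. Writing $Z_A=\sum_{t=1}^{\tau_v^+}\1(X_t\in A)$, one has $\E_v[Z_A]=\pi(A)/\pi(v)$ by the standard occupation identity, while on $\{Z_A>0\}$ the strong Markov property at the first entrance to $A$ gives $Z_A\le\tau_v^+-\tau_A$ and hence $\E_v[Z_A\mid Z_A>0]\le t_v$. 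Therefore
\[
\P_v(\tau_A<\tau_v^+)=\frac{\E_v[Z_A]}{\E_v[Z_A\mid Z_A>0]}\ge\frac{\pi(A)}{\pi(v)\,t_v},
\]
and bounding $\E_v[\tau_A]$ by the expected number of excursions until success times $\E_v[\tau_v^+]=1/\pi(v)$ (Wald) yields $\E_v[\tau_A]\le t_v/\pi(A)$. No resistances, no contraction, no tree.

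Your proposed counterexample also points the wrong way: if $v$ is a leaf attached to a highly connected structure, then $t_v$ is \emph{large} (it takes many visits to the leaf's unique neighbour before the walk crosses to $v$), not small relative to the mixing time. So the effective-resistance/contraction route you sketch is chasing a tree-specific proof of an inequality that holds universally, and the ``deepest vertex'' bookkeeping you identify as the genuine content is simply unnecessary. The gap is real in the sense that your Step~2 is a plan rather than a proof, but the way to close it is to abandon the resistance approach, not to complete it.
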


\begin{proof}[\textbf{Proof}]
If $v \in A$, then the result is clear, so we assume that $v \notin A$. 

For all $x$ we have
\[
\E_x[\tau_A] \leq \E_x[\tau_v] + \E_v[\tau_A] \leq t_v + \E_v[\tau_A].
\]
Thus it suffices to show that
\begin{align}\label{eq:firstgoal}
\E_v[\tau_A] \leq \frac{t_v}{\pi(A)} .
\end{align}
In order to show that, we are going to look at excursions of the random walk from $v$.  
Defining $Z_A$ to be the time that the walk spends in $A$ in an excursion from $v$, i.e., $Z_A = \sum_{t=1}^{\tau_v^+} \1(X_t \in A)$, we can write
\begin{align*}
\P_v(\tau_A < \tau_v^+) = \frac{\E_v[Z_A]}{\E_v[Z_A| Z_A >0]}.
\end{align*}
Clearly
\begin{align*}
\E_v[Z_A] = \frac{\pi(A)}{\pi(v)} \ \ \text{and} \ \ \E_v[Z_A| Z_A >0] \leq t_v.
\end{align*}
Hence 
\begin{align*}
\P_v(\tau_A < \tau_v^+) \geq \frac{\pi(A)}{\pi(v)}\frac{1}{t_v}.
\end{align*}
Therefore we get 
\begin{align*}
\E_v[\tau_A] \leq \E_v\left[\sum_{i=1}^{N}\ell_i\right],
\end{align*}
where $N$ is a geometric random variable of success probability $\frac{\pi(A)}{\pi(v)}\frac{1}{t_v}$ and $\ell_i$ is the length of the $i$-th excursion from $v$. By Wald's identity we have
\begin{align*} 
\E_v[\tau_A] \leq \E_v[N] \E_v[\tau_v^+] \leq \frac{\pi(v)t_v}{\pi(A)} \frac{1}{\pi(v)} = \frac{t_v}{\pi(A)}
\end{align*}
and this completes the proof.
\end{proof}

We call a node $v$ in $T$ {\it{central}} if each component of $T-\{v\}$ has stationary probability at most $1/2$. It is easy to see that central nodes exist.
Indeed, for any node $u$ of the tree denote by $C(u)$ the component of $T-\{u\}$ with the largest stationary probability. Now consider the vertex $u^*$ that achieves $\displaystyle \min_u |\pi(C(u))|$. This is clearly a central node, since if $\pi(C(u^*)) >1/2$, then the neighbour $w \in C(u^*)$ of $u^*$ would satisfy $\pi(C(w)) < \pi(C(u^*))$, contradicting the choice of $u^*$.

\begin{lemma}\label{lem:lazytree}
Let $T$ be a tree on $n$ vertices with conductances on the edges. Then for any central node $v$ of $T$
\[
\tl\asymp t_v,
\]
where $t_v = \max_x \E_x[\tau_v]$.
\end{lemma}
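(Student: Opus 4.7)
The plan is to use Theorem~\ref{thm:mix-hit} as an intermediary: both $\tl$ and $t_v$ will be compared to $\thit(\alpha)$ for a suitable $\alpha<1/2$. The lower bound on $\thit(\alpha)$ in terms of $t_v$ will follow immediately from the centrality of $v$, while the upper bound is exactly the content of Lemma~\ref{lem:hittingtree}.

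For the inequality $t_v \lesssim \tl$, fix any $x \neq v$ and let $C_x$ denote the component of $T-\{v\}$ containing $x$. Because $T$ is a tree, $v$ is the unique vertex through which a path from $x$ can exit $C_x$, so the first time the walk enters $T\setminus C_x$ coincides with its first visit to $v$. This gives the identity $\E_x[\tau_v]=\E_x[\tau_{T\setminus C_x}]$. The centrality of $v$ yields $\pi(C_x)\leq 1/2$, hence $\pi(T\setminus C_x)\geq 1/2$. Taking $\alpha=1/3$ in the definition of $\thit(\alpha)$ and then maximising over $x$ shows $t_v \leq \thit(1/3)$, which by Theorem~\ref{thm:mix-hit} is bounded by a constant multiple of $\tl$.

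For the reverse inequality $\tl \lesssim t_v$, Theorem~\ref{thm:mix-hit} reduces the problem to bounding $\thit(1/4)$ by a multiple of $t_v$. This is a direct application of Lemma~\ref{lem:hittingtree}: for any set $A$ with $\pi(A)\geq 1/4$, the lemma gives
\[
\max_x \E_x[\tau_A] \leq t_v\Bigl(1 + \frac{1}{\pi(A)}\Bigr) \leq 5\, t_v,
\]
so $\thit(1/4) \leq 5\, t_v$ and hence $\tl \lesssim t_v$.

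There is no real obstacle: once Theorem~\ref{thm:mix-hit} and Lemma~\ref{lem:hittingtree} are available, the only tree-specific step is the observation that $v$ is a cut vertex separating $C_x$ from the remainder of $T$, which turns the hitting time of the single vertex $v$ into the hitting time of the large set $T\setminus C_x$. This is what converts the (a priori non-robust) parameter $t_v$ into a maximum hitting time of sets of stationary mass bounded away from $0$, enabling the use of Theorem~\ref{thm:mix-hit}.
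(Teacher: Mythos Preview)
Your proof is correct and follows essentially the same approach as the paper: both directions go through Theorem~\ref{thm:mix-hit}, with Lemma~\ref{lem:hittingtree} supplying $\tl\lesssim t_v$ and the cut-vertex observation $\E_x[\tau_v]=\E_x[\tau_{T\setminus C_x}]$ supplying $t_v\lesssim \tl$. The only cosmetic difference is that the paper phrases the latter as $t_v\leq \thit(1/2)$, whereas you take $\alpha=1/3$ to stay strictly within the hypothesis $\alpha<1/2$ of Theorem~\ref{thm:mix-hit}; since only the easy inequality $\thit(\alpha)\lesssim \tl$ is used here (which, as noted in the introduction, holds for every $\alpha>0$), either choice works.
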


\begin{proof}[\textbf{Proof}]
First of all from Lemma~\ref{lem:hittingtree} and Theorem~\ref{thm:mix-hit} we obtain that for any central node $v$
\begin{align}\label{eq:tltv}
\tl \leq c t_v,
\end{align}
for an absolute constant $c$. 

To finish the proof of the lemma we have to show that for any central node $v$ 
\begin{align}\label{eq:goalcentral}
\tl \geq c t_v,
\end{align}
for a positive absolute constant $c$. 
\newline
It is easy to see that $\E_x[\tau_v] = \E_x[\tau_B]$, for $x\neq v$, where $B$ is the union of $\{v\}$ and the components of $T-\{v\}$ that do not contain $x$.The definition of a central node gives that $\pi(B) \geq 1/2$. Hence, 
\begin{align}\label{eq:tvth12}
t_v \leq \thit(1/2).
\end{align}
Inequality~\eqref{eq:goalcentral} now follows from Theorem~\ref{thm:mix-hit}.
\end{proof}

We now recall a formula from~\cite[Lemma~1, Chapter~5]{AldFill} for the expected hitting time on trees.
\begin{lemma} \label{tree}
Let $T$ be a finite tree with edge conductances $c(u,v)$, for all edges $\langle u, v\rangle$. Let $x$ and $y$ be two vertices of $T$ and let $\{v_0=x,v_1,\ldots, v_{n}=y \}$ be the
unique path joining them.
Let $T_x(z)$ be the union of $\{z\}$ and the connected component of $T-\{z\}$ containing $x$.
Writing $C_i = \sum_{w,z \in T_x(v_{i+1})} c(w,z)$, we then have 
\[
E_x[\tau_y] = \sum_{i=0}^{n-1}\left( \frac{C_i}{c(v_i,v_{i+1})} - 1\right).
\]
\end{lemma}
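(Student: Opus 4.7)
My plan is to decompose $\E_x[\tau_y]$ along the unique path from $x$ to $y$ and then evaluate each one-edge hitting time by applying the commute-time identity to a carefully chosen subtree.

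First, because $T$ is a tree, every sample path from $x$ to $y$ must visit $v_1, v_2, \ldots, v_{n-1}$ in that order. Applying the strong Markov property successively at $\tau_{v_1}, \tau_{v_2}, \ldots, \tau_{v_{n-1}}$ gives
\[
\E_x[\tau_y] = \sum_{i=0}^{n-1} \E_{v_i}[\tau_{v_{i+1}}],
\]
so it suffices to compute each single-edge hitting time $\E_{v_i}[\tau_{v_{i+1}}]$.

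Fix $i$. The key observation is that from $v_i$ the walk cannot leave $T_x(v_{i+1})$ before hitting $v_{i+1}$, since $v_{i+1}$ is the only vertex through which $T_x(v_{i+1})$ connects to the remainder of $T$; the transition probabilities at every vertex of $T_x(v_{i+1}) \setminus \{v_{i+1}\}$ are the same in $T$ and in the weighted subtree $G_i := T_x(v_{i+1})$. Hence $\E_{v_i}[\tau_{v_{i+1}}]$ is the same whether computed in $T$ or in $G_i$. In $G_i$ the vertex $v_{i+1}$ is a leaf with unique neighbor $v_i$, so a single step from $v_{i+1}$ deterministically lands on $v_i$, giving $\E_{v_{i+1}}[\tau_{v_i}] = 1$; also $\Reff(v_i,v_{i+1}) = 1/c(v_i,v_{i+1})$. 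Since $C_i$ sums $c(w,z)$ over \emph{ordered} pairs $w,z \in G_i$, the total edge conductance of $G_i$ equals $C_i/2$, and the commute-time identity in $G_i$ reads
\[
\E_{v_i}[\tau_{v_{i+1}}] + 1 \;=\; 2 \cdot \tfrac{C_i}{2} \cdot \Reff(v_i,v_{i+1}) \;=\; \frac{C_i}{c(v_i,v_{i+1})}.
\]
Rearranging and summing over $i$ produces the stated formula.

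The only potential stumbling block is the bookkeeping of the factor of $2$: the ordered-pair convention in the definition of $C_i$ already absorbs one factor of $2$, which exactly cancels the $2$ from the commute-time identity and yields the clean expression $C_i/c(v_i,v_{i+1}) - 1$. Aside from this, the proof uses only the tree structure (for the path decomposition and for $v_{i+1}$ being a leaf of $G_i$) together with the standard commute-time identity for reversible random walks on weighted graphs.
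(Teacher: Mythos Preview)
Your proof is correct. The paper itself does not supply a proof of this lemma; it simply cites the result from Aldous--Fill \cite[Lemma~1, Chapter~5]{AldFill}, so there is no in-paper argument to compare against.

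A small remark on presentation: your reading of $C_i = \sum_{w,z \in T_x(v_{i+1})} c(w,z)$ as a sum over ordered pairs (so that $C_i$ equals the total vertex weight $\sum_{w\in G_i} c_w$ of the subtree $G_i$) is the right one, and your sanity check with the factor of $2$ is exactly on point---one can verify this convention on the two-vertex tree, where $\E_x[\tau_y]=1$ forces $C_0 = 2c(x,y)$. An equally short alternative to the commute-time identity, which avoids the bookkeeping entirely, is to use the return-time formula in $G_i$: since $v_{i+1}$ is a leaf there with $c_{v_{i+1}} = c(v_i,v_{i+1})$, one has
\[
\frac{C_i}{c(v_i,v_{i+1})} \;=\; \frac{1}{\pi_{G_i}(v_{i+1})} \;=\; \E_{v_{i+1}}\!\big[\tau_{v_{i+1}}^+\big] \;=\; 1 + \E_{v_i}[\tau_{v_{i+1}}],
\]
which gives the same conclusion. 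Either route is standard; your argument is complete as written.
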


\begin{proof}[\textbf{Proof of Theorem~\ref{thm:robust}}]
From Lemma~\ref{tree} and the boundedness of the conductances we get that for any two vertices $x$ and $v$ 
\[
\E_x[\tau_v] \asymp \E_x[\widetilde{\tau}_v],
\]
where $\widetilde{\tau}$ denotes hitting times for the random walk on $\widetilde{T}$.

Lemma~\ref{lem:lazytree} then finishes the proof.
\end{proof}

We end this section with another application of our results on the robustness of mixing when the probability of staying in place changes in a bounded way. The following corollary answers a question suggested to us by K.\ Burdzy (private communication).

\begin{corollary}
Let $P$ be an irreducible transition matrix on the state space $E$ and suppose that $(a(x,x))_{x\in E}$ satisfy $c_1\leq a(x,x) \leq c_2$ for all $x\in E$, where $c_1, c_2 \in (0,1)$. Let $Q$ be the transition matrix of the Markov chain with transitions: when at $x$ it stays at $x$ with probability $a(x,x)$. Otherwise, with probability $1-a(x,x)$ it jumps to a state $y\in E$ with probability $P(x,y)$. We then have
\[
\tmix(Q) \asymp \tl,
\]
where $\tmix(Q)$ is the mixing time of the transition matrix $Q$.
\end{corollary}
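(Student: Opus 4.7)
The strategy is to realize $Q$ as a random time change of a $P$-chain, verify that $Q$ itself is reversible with respect to a measure comparable to $\pi$, and then chain together the hitting-time characterization (Theorem~\ref{thm:mix-hit}) and the average-mixing characterization (Theorem~\ref{thm:ave-lazy}) to conclude. Throughout I take $P$ to be reversible (implicit in the corollary, since otherwise $\tl$ admits no useful characterization).

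First I would check that $Q$ is reversible with respect to $\widetilde\pi(x)\propto\pi(x)/(1-a(x,x))$: for $x\ne y$, $\widetilde\pi(x)Q(x,y)$ is proportional to $\pi(x)P(x,y)$, which is symmetric in $x,y$ by reversibility of $P$. Since $1-a(x,x)\in[1-c_2,1-c_1]$, the measures $\widetilde\pi$ and $\pi$ agree pointwise up to a multiplicative constant depending only on $c_1,c_2$, so $\widetilde\pi(A)\asymp\pi(A)$ for every $A\subseteq E$.

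Next, I would couple $Q$ with a $P$-chain $(Z_k)$ via $Q_t=Z_{N_t}$, where at each time, given the current value $Z_{N_t}=x$, we advance $N_t$ with probability $1-a(x,x)$ and keep it fixed otherwise. With $T_k=\min\{t:N_t=k\}$, the increments $T_{k+1}-T_k$ are geometric with conditional mean $1/(1-a(Z_k,Z_k))\in[1/(1-c_1),1/(1-c_2)]$, and $\tau_A^Q=T_{\tau_A^P}$, so
\[
\frac{\E_x[\tau_A^P]}{1-c_1}\le\E_x[\tau_A^Q]\le\frac{\E_x[\tau_A^P]}{1-c_2}.
\]
Combined with the previous paragraph, this gives $\thit^Q(\alpha)\asymp\thit^P(\alpha)$ for every fixed $\alpha\in(0,1/2)$, where $\thit^Q$ is computed with respect to $\widetilde\pi$.

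Finally, Theorem~\ref{thm:mix-hit} applied to both reversible chains gives $\tl^Q\asymp\thit^Q\asymp\thit^P\asymp\tl$, so it remains to show $\tmix(Q)\asymp\tl^Q$. The direction $\tl^Q\lesssim\tmix(Q)$ follows from $\tl^Q\asymp\taver(Q)\le\tmix(Q)$ via Theorem~\ref{thm:ave-lazy} and the triangle inequality noted before Proposition~\ref{prop:tavertmix}. Conversely, since $Q(x,x)\ge a(x,x)\ge c_1>0$, Proposition~\ref{prop:tavertmix} applied with $\delta=c_1$ yields $\tmix(Q)\lesssim\taver(Q)\vee\tfrac{1}{c_1(1-c_1)}$, and Theorem~\ref{thm:ave-lazy} again gives $\taver(Q)\asymp\tl^Q$; the additive term depends only on $c_1$ and is absorbed into $\asymp$. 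The main obstacle is purely bookkeeping: once $Q$ is known to be reversible, the result is a chain of previously established equivalences, but some care is required to ensure that all implied constants depend only on $c_1,c_2$ and not on the size or other structure of the chain.
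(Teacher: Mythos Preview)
Your proposal is correct and follows essentially the same route as the paper: both use the time-change representation to compare hitting times for $Q$ and $P$, invoke Theorem~\ref{thm:mix-hit} to pass to lazy mixing times, and close with Proposition~\ref{prop:tavertmix} and Theorem~\ref{thm:ave-lazy}. You are in fact more explicit than the paper in two places: you verify directly that $Q$ is reversible with respect to $\widetilde\pi(x)\propto\pi(x)/(1-a(x,x))$, and you spell out the intermediate equivalence $\tl^Q\asymp\tl$ via Theorem~\ref{thm:mix-hit} applied to both chains, which the paper leaves implicit when it jumps from $\taver(Q)$ to $\tl$ in the upper bound.
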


\begin{proof}[\textbf {Proof}]

Since the loop probabilities $a(x,x)$ are bounded from below and above, it follows that if $\til{\pi}$ is the stationary probability of the matrix $Q$, then $\til{\pi} \asymp \pi$. As we noted in the Introduction, the lower bound of Theorem~\ref{thm:mix-hit} is always true and thus we have
\begin{align}\label{eq:firsttil}
\max_{x,A: \til{\pi}(A) \geq 1/4 } \estart{\til{\tau}_A}{x} \lesssim  \tmix(Q).
\end{align}
For every $y\in E$ let $(\xi_i^{(y)})_{i\in \N}$ be i.i.d.\ geometric random variables of mean $1/a(y,y)$. Then we can write
\begin{align}\label{eq:laziness}
\til{\tau}_A = \sum_{y\in E} \sum_{i=1}^{L_y} \xi_i^{(y)},
\end{align}
where $L_y$ is the local time at $y$ up to the first hitting time of $A$ by the chain with transition matrix~$P$. Wald's identity gives
\[
\estart{\til{\tau}_A}{x} = \sum_{y\in E} \frac{\estart{L_y}{x}}{a(y,y)}. 
\]
If $\tau'_A$ is the hitting of $A$ by the lazy version of the chain, i.e.\ taking $a(y,y) = 1/2$ for all $y$, then using the assumption on the boundedness of the probabilities $(a(y,y))$ we get
\[
\estart{\til{\tau}_A}{x} \asymp \estart{\tau'_A}{x}.
\]
From~\eqref{eq:laziness} applying Wald's identity again we deduce
\[
\estart{\tau'_A}{x} = 2\estart{\tau_A}{x},
\]
where $\tau_A$ is the first hitting time of the set $A$ by the Markov chain with transition matrix $P$.
Hence using Theorem~\ref{thm:mix-hit} and~\eqref{eq:firsttil} we deduce that 
\[
\tmix(Q) \gtrsim \tl.
\]
It remains to show
\begin{align}\label{eq:finalgoaltmix}
\tmix(Q) \lesssim \tl.
\end{align}
Using Proposition~\ref{prop:tavertmix} we get that 
\[
\tmix(A) \lesssim \taver.
\]
This together with Theorem~\ref{thm:ave-lazy} finishes the proof of~\eqref{eq:finalgoaltmix}.
\end{proof}

\section{Examples and Questions}

We start this section with examples that show that the reversibility assumption in Theorem~\ref{thm:mix-hit} and Corollary~\ref{cor:tltstop} is essential.

\begin{example}\rm{{Biased random walk on the cycle.}
\newline
Let $\Z_n=\{1,2,\ldots,n\}$ denote the $n$-cycle and let $\displaystyle P(i,i+1)=\tfrac{2}{3}$ for all $1 \leq i <n $ and $\displaystyle P(n,1)=\tfrac{2}{3}$. Also $\displaystyle P(i,i-1)=\tfrac{1}{3}$, for all $1<i\leq n$, and $\displaystyle P(1,n)=\tfrac{1}{3}$. Then it is easy to see that the mixing time of the lazy random walk is of order $n^2$, while the maximum hitting time of large sets is of order $n$. Also, in this case $\tstop = O(n)$, since for any starting point, the stopping time that chooses a random target according to the stationary distribution and waits until it hits it, is stationary and has mean of order $n$. This example demonstrates that for non-reversible chains, $\thit$ and $\tstop$ can be much smaller than $\tl$.
}
\end{example}

\begin{example}\rm{{The greasy ladder.}
\newline
Let $S=\{1,\ldots,n\}$ and $\displaystyle P(i,i+1)=\tfrac{1}{2}=1-P(i,1)$ for $i=1,\ldots,n-1$ and $P(n,1)=1$. Then it is easy to check that
\[
\pi(i)=\frac{2^{-i}}{1-2^{-n}}
\]
is the stationary distribution and that $\tl$ and $\thit$ are both of order 1.
\newline
This example was presented in Aldous \cite{Aldousmixing}, who wrote that $\tstop$ is of order $n$. We give an easy proof here. Essentially the same example is discussed by Lov{\'a}sz and Winkler \cite{LWmixing} under the name ``the winning streak''.
\newline
Let $\tau_\pi$ be the first hitting time of a stationary target, i.e.\ a target chosen according to the stationary distribution. Then starting from $1$, this stopping time achieves the minimum in the definition of $\tstop$, i.e.\
\[
\E_1[\tau_\pi] = \min\{\E_1[\Lambda]: \Lambda \text{ is a stopping time s.t. } \P_1(X_{\Lambda}\in \cdot)=\pi(\cdot) \}.
\]
Indeed, starting from $1$ the stopping time $\tau_\pi$ has a halting state, which is $n$, and hence from Theorem~\ref{thm:lovwin} we get the mean optimality. By the random target lemma \cite{AldFill} and \cite{LevPerWil}
we get that $\E_i[\tau_\pi]=\E_1[\tau_\pi]$, for all $i \leq n$. Since for all $i$ we have that
\[
\E_i[\tau_\pi] \geq \min\{\E_i[\Lambda]: \Lambda \text{ is a stopping time s.t. } \P_i(X_{\Lambda}\in \cdot)=\pi(\cdot) \},
\]
it follows that $\tstop \leq \E_1[\tau_\pi]$. But also $\E_1[\tau_\pi] \leq \tstop$, and hence $\tstop = \E_1[\tau_\pi]$. By straightforward calculations, we get that $\E_1[T_i] = 2^{i}(1-2^{-n})$, for all $i\geq 2$, and hence
\[
\tstop = \E_1[\tau_\pi] = \sum_{i=2}^{n} 2^{i}(1- 2^{-n})\frac{2^{-i}}{1-2^{-n}} = n - 1.
\]
This example shows that for a non-reversible chain $\tstop$ can be much bigger than
$\tl$ or $\thit$.
}
\end{example}

\begin{question}\rm{
The equivalence $\thit(\alpha) \asymp \tl$ in Theorem~\ref{thm:mix-hit} is not valid for $\displaystyle \alpha > \tfrac{1}{2}$, since for two $n$-vertex complete graphs with a single edge connecting them, $\tl$ is of order $n^2$ and $\thit(\alpha)$ is at most $n$ for any $\alpha>1/2$.
Does the equivalence $\displaystyle \thit\left(1/2\right) \asymp \tl$ hold for all reversible chains?}
\newline 
 (After this question was posed in the first version of this paper, it was answered positively by Griffiths et al~\cite{SimonRoberto}.)
\end{question}

\section*{Acknowledgments}
We are indebted to Oded Schramm for suggesting the use of the parameter $\tg$ to relate mixing times and hitting times. We are grateful to David Aldous for helpful discussions.
After this work was completed, we were informed that Theorem~\ref{thm:mix-hit} was also proved independently by Roberto Imbuzeiro Oliveira (\cite{Imbuzeiro}).

We thank Yang Cai, J{\'u}lia Komj{\'a}thy and the referee for useful comments.

\bibliographystyle{plain}
\bibliography{biblio}

\end{document}